\newtheorem{Theorem}{Theorem}[section]
\newtheorem{Proposition}[Theorem]{Proposition}
\newtheorem{Lemma}[Theorem]{Lemma}
\newtheorem{Corollary}[Theorem]{Corollary}
\theoremstyle{definition}
\newtheorem{Definition}[Theorem]{Definition}
\newtheorem{Remark}[Theorem]{Remark}
\newcommand{\bTheorem}[1]{
\begin{Theorem} \label{T#1} }
\newcommand{\eT}{\end{Theorem}}
\newcommand{\bProposition}[1]{
\begin{Proposition} \label{P#1}}
\newcommand{\eP}{\end{Proposition}}
\newcommand{\bLemma}[1]{
\begin{Lemma} \label{L#1} }
\newcommand{\eL}{\end{Lemma}}
\newcommand{\bCorollary}[1]{
\begin{Corollary} \label{C#1} }
\newcommand{\eC}{\end{Corollary}}
\newcommand{\vrh}{\vr_h}
\newcommand{\vuh}{\vu_h}
\newcommand{\bRemark}[1]{
\begin{Remark} \label{R#1} }
\newcommand{\eR}{\end{Remark}}
\newcommand{\bDefinition}[1]{
\begin{Definition} \label{D#1} }
\newcommand{\eD}{\end{Definition}}
\newcommand{\Nu}{\mathcal{V}}
\newcommand{\vcg}[1]{{\pmb #1}}
\newcommand{\intSh}[1] {\int_{\sigma} #1 \ds }
\newcommand{\bu}{\mathbf u}
\newcommand{\bfu}{\mathbf{u}}
\newcommand{\bfv}{\mathbf{v}}
\newcommand{\bfq}{\mathbf{q}}
\newcommand{\bfe}{\mathbf{e}}
\newcommand{\bfx}{\mathbf{x}}
\newcommand{\bfphi}{\boldsymbol{\phi}}
\newcommand{\Pim}{\Pi_\mathcal{T}}
\newcommand{\Pid}{\Pi_\mathcal{D}}
\newcommand{\sumi}{\sum_{i=1}^d}
\newcommand{\ds}{\,{\rm d}S(x)}
\newcommand{\bFormula}[1]{
\begin{equation} \label{#1}}
\newcommand{\eF}{\end{equation}}
\newcommand{\grid}{\mathcal{T}}
\newcommand{\dgrid}{\mathcal{D}}
\newcommand{\TS}{\Delta t}
\newcommand{\Divh}{{\rm div}_h}
\newcommand{\Gradedge}{\nabla_\faces}
\newcommand{\Gradmesh}{\nabla_{\mathcal{T}}}
\newcommand{\pdedge}{\eth_ \faces}
\newcommand{\pdedgei}{\eth_ \faces^{(i)}}
\newcommand{\pdmesh}{\eth_\grid}
\newcommand{\pdmeshi}{\eth_\grid^{(i)}}
\newcommand{\co}[2]{{\rm co}\{ #1 , #2 \}}
\newcommand{\Ov}[1]{\overline{#1}}
\newcommand{\DC}{C^\infty_c}
\newcommand{\aleq}{\stackrel{<}{\sim}}
\newcommand{\vr}{\varrho}
\newcommand{\vu}{\vc{u}}
\newcommand{\vc}[1]{{\bf #1}}
\newcommand{\Div}{{\rm div}_x}
\newcommand{\Grad}{\nabla_x}
\newcommand{\dx}{\,{\rm d} {x}}
\newcommand{\dt}{\,{\rm d} t }
\newcommand{\jump}[1]{\left\llbracket#1\right\rrbracket}
\newcommand{\norm}[1]{\left\lVert#1\right\rVert}
\newcommand{\intO}[1]{\int_{\Omega} #1 \dx}
\newcommand{\vv}{\vc{v}}
\newcommand{\sumEKh}[1]{ \sum_{K \in \grid} \sum_{\sigma \subset \partial K }\int_{\sigma} #1 \ds }
\newcommand{\I}{\mathbb{I}}
\def\softd{{\leavevmode\setbox1=\hbox{d}%
          \hbox to 1.05\wd1{d\kern-0.4ex{\char039}\hss}}}
\definecolor{Cgrey}{rgb}{0.85,0.85,0.85}
\definecolor{Cblue}{rgb}{0.50,0.85,0.85}
\definecolor{Cred}{rgb}{1,0,0}
\definecolor{fancy}{rgb}{0.10,0.85,0.10}
\definecolor{forestgreen}{rgb}{0.13, 0.55, 0.13}
\newcommand\Cbox[2]{%
    \newbox\contentbox%
    \newbox\bkgdbox%
    \setbox\contentbox\hbox to \hsize{%
        \vtop{
            \kern\columnsep
            \hbox to \hsize{%
                \kern\columnsep%
                \advance\hsize by -2\columnsep%
                \setlength{\textwidth}{\hsize}%
                \vbox{
                    \parskip=\baselineskip
                    \parindent=0bp
                    #2
                }%
                \kern\columnsep%
            }%
            \kern\columnsep%
        }%
    }%
    \setbox\bkgdbox\vbox{
        \color{#1}
        \hrule width  \wd\contentbox %
               height \ht\contentbox %
               depth  \dp\contentbox
        \color{black}
    }%
    \wd\bkgdbox=0bp%
    \vbox{\hbox to \hsize{\box\bkgdbox\box\contentbox}}%
    \vskip\baselineskip%
}
\date{}
\newcommand{\pd}{\partial}
\newcommand{\Hc}{\mathcal{H}}
\newcommand{\eps}{\varepsilon}
\newcommand{\faces}{\mathcal{E}}
\newcommand{\facesi}{\faces _i}
\newcommand{\facesK}{\faces(K)}
\newcommand{\facesKi}{\faces _i(K)}
\newcommand{\facesint}{\faces}
\newcommand{\facesinti}{\facesi}
\newcommand{\bQh}{ Q_h}
\newcommand{\bWh}{ W_h}
\newcommand{\myangle}[1]{\langle#1\rangle}
\begin{document}


\title{Convergence of a finite volume scheme \\ for the compressible Navier--Stokes system }

\author{Eduard Feireisl$^{\clubsuit,}$\thanks{E.F.  and H.M. have received funding from
the Czech Sciences Foundation (GA\v CR), Grant Agreement
18--05974S.
B.S. has received funding from the Czech Sciences Foundation (GA\v CR), Grant Agreement 16--03230S.
The Mathematical Institute of the Czech Academy of Sciences is supported by RVO:67985840.
\newline
\hspace*{1em} $^{\spadesuit}$M.L. has been partially supported by the German Science Foundation under the grants TRR~146 Multiscale simulation methods for soft matter systems and TRR~165 Waves to weather.}
\and M\'aria Luk\'a\v{c}ov\'a-Medvi\softd ov\'a $^{\spadesuit}$
\and Hana Mizerov\'a $^{*, \dagger}$
\and Bangwei She $^{*}$
}

\date{}

\maketitle

\centerline{$^*$ Institute of Mathematics of the Academy of Sciences of the Czech Republic}
\centerline{\v Zitn\' a 25, CZ-115 67 Praha 1, Czech Republic}
\centerline{feireisl@math.cas.cz, mizerova@math.cas.cz, she@math.cas.cz}

\bigskip
\centerline{$^\clubsuit$ Technische Universit\"at Berlin}
\centerline{Stra\ss e des 17.~Juni, Berlin, Germany}

\bigskip
\centerline{$^\spadesuit$ Institute of Mathematics, Johannes Gutenberg-University Mainz}
\centerline{Staudingerweg 9, 55 128 Mainz, Germany}
\centerline{lukacova@uni-mainz.de}

\bigskip
\centerline{$^\dagger$ Department of Mathematical Analysis and Numerical Mathematics}
\centerline{Faculty of Mathematics, Physics and Informatics of the Comenius University}

\centerline{Mlynsk\' a dolina, 842 48 Bratislava, Slovakia}

\begin{abstract}
We study convergence of a finite volume scheme for the compressible (barotropic) Navier--Stokes system.
First we prove the energy stability and consistency of the scheme and show that the numerical solutions generate a dissipative measure-valued solution of the system.  Then by the dissipative measure-valued-strong uniqueness principle, we conclude the convergence of the numerical solution to the strong solution as long as the latter exists. Numerical experiments for standard benchmark tests support our theoretical results.
\end{abstract}

{\bf Keywords:} compressible Navier--Stokes system, convergence, dissipative measure--valued solution,  finite volume method

{\bf AMS classification:} 35Q30, 65N12, 76M12, 76NXX


\section{Introduction}

We study the flow of a viscous fluid governed by the compressible Navier--Stokes system:
\begin{equation}\label{ns_eqs}
\begin{aligned}
\partial_t \vr + \Div (\vr \vu) &= 0,\\
\partial_t (\vr \vu) + \Div (\vr \vu \otimes \vu) + \Grad p &=  \mu \Delta_x \vu +  (\mu +\lambda) \Grad \Div \vu
\end{aligned}
\end{equation}
in the time--space domain $(0,T)\times\Omega$.
Here $\vr = \vr(t,x),$ and  $\vu = \vu(t,x)$ are the fluid density and velocity,  constants $\mu > 0$, $\lambda\geq -\mu$ are the viscosity coefficients.
The pressure $p$ is assumed to satisfy the \emph{isentropic} state equation
\begin{align}\label{pressure}
p(\vr) = a \vr ^{\gamma} , \quad a>0, \quad \gamma>1.
\end{align}

For the sake of simplicity we impose the periodic boundary conditions, meaning that the domain $\Omega$ can be identified with the flat torus $\Omega = ([0,1]|_{0,1})^d,\ d=1,2,3$.
The problem is (formally) closed by prescribing the initial conditions
\begin{equation}\label{initial_data}
\vr(0) =\vr_0  \in  L^\gamma(\Omega), \ \vr_0>0,\ \vu(0) = \vu_0 \in L^2 (\Omega; R^d).
\end{equation}

In the literature we can find a variety of numerical schemes for viscous compressible flows, such as the Marker--And--Cell scheme~\cite{GGHL, GallouetMAC2018, GallouetMAC, Grapsas, hoseksheMAC},  the finite element schemes~\cite{ABLV, Karper, ZTN2014},  the finite volume schemes~\cite{feist1,  EyGaHe, HJL, MS1998}  and the discontinuous Galerkin schemes~\cite{DoFei, G2018, ID2018}. In this paper we want to concentrate on the finite volume methods that are  standardly used for physical or engineering applications, see,
e.g.~\cite{FK2002, LPK2015, MS1998, PKH2014, WK1996, ZTN2014} and the references therein.  In the cell-centered finite volumes the unknown quantities (numerical solution) are located at the centers of mass of the mesh cells (finite volume cells).  This is very typical for the compressible inviscid flows governed by the Euler equations. By means of the Gauss theorem the inviscid fluxes at cell interfaces are approximated by suitable numerical flux functions.  The latter are based on the flux-vector splitting or upwinding strategy as we will explain below.

For the compressible Navier-Stokes equations in addition the viscous fluxes need to be approximated, which means that the gradients of the numerical solution are to be represented at the cell interfaces. Having piecewise discontinuous approximate functions this requires an additional reconstruction step, which is usually realized by introducing the so-called dual grid around the cell interfaces
of a primary grid. We refer a reader to Kozel et al. \cite{FK2002, LPK2015, PKH2014}, where the viscous terms are approximated by the second order central differences using a dual finite volume grid of octahedrons constructed over
each face of the primary hexagonal finite volume grid. In \cite{MS1998} and \cite{FFL1995} the barycentric subdivision is used to define
dual finite volumes, in \cite{W1997, WK1996} a special reconstruction satisfying maximum principle is developed for the viscous fluxes. A nice overview of various finite volume methods with the gradient approximations at cell interfaces can be found in \cite{B2012}.

Although these methods are frequently used in practical simulations, their convergence for multi-dimen\-sional viscous compressible flows
remains open in general.
  For a mixed finite element--discontinuous Galerkin method, the convergence to a weak solution has been shown by Karper
in his pioneering work \cite{Karper} under the assumption that the adiabatic coefficient $\gamma>3$.
Note that the convergence in this case holds up to a subsequence as the weak solutions are not known to be unique. Moreover,
any generalization of the proof of Karper \cite{Karper} for other numerical schemes, in particular for  cell-centered finite volume methods discussed in the present paper, is highly non-trivial.
In \cite{jovanovic} Jovanovi\'c obtained the error estimate for the isentropic Navier--Stokes equations for entropy dissipative finite volume--finite difference methods under some rather restrictive assumptions on the global smooth solution.
In \cite{FeLu18_CR} Feireisl and Luk\'a\v{c}ov\'a proposed a new way of the convergence proof via  the dissipative measure-valued (DMV) solutions.  They improved the result of \cite{Karper} and showed the convergence of the mixed finite element-finite volume method for the isentropic Navier--Stokes equations for physically relevant range of the adiabatic coefficient $\gamma \in (1,2)$.

We should also mention the recent results  on the analysis of the Marker--And--Cell schemes, cf.~\cite{GallouetMAC2018, GallouetMAC, hoseksheMAC}, which are based on the staggered grid approximation of the velocity and the primary grid approximation of the density.
In \cite{GallouetMAC2018} the convergence to a weak solution of stationary Navier--Stokes equations for $\gamma >3$ has been proved.
In \cite{hoseksheMAC} the consistency and the energy stability of the  Marker--And--Cell scheme has been shown for instationary Navier-Stokes equations. The error estimates for $\gamma > 3/2$ have been presented in \cite{GallouetMAC}  using the relative entropy method.

The main aim of this paper is to demonstrate that the strategy proposed in \cite{FeLu18_CR} can be adapted to investigate  the convergence of  finite volume methods.  More precisely, we consider the first order cell-centered finite volume method, where the
inviscid fluxes are approximated by the upwinding and the viscous fluxes by the central differences. See also our recent works~\cite{FLM18_brenner, FLM18_euler} where analogous finite volume schemes have been applied to show the convergence for the complete Euler system.
We adapt this approach to a time-implicit finite volume method for the barotropic Navier--Stokes system  and show the stability as well as the convergence of numerical solutions to the (unique) strong solution of \eqref{ns_eqs}  provided the latter exists.
The adiabatic coefficient stays in a physically reasonable range $1<\gamma<2$.
To the best of our knowledge, there is no convergence proof of a finite volume method for the multi-dimensional Navier--Stokes system~\eqref{ns_eqs} available in literature assuming only the existence of the  strong solution.

The rest of the paper is organized as follows. In Section \ref{sec_Notations} we introduce the mesh, basic notations, the numerical method, and some preliminary (in)equalities.
Next, in Section~\ref{sec_Stability} we show the energy stability of the scheme and derive all necessary {\it a priori} bounds. Then we establish the consistency  formulation of the scheme in Section~\ref{sec_Consistency}. Further, we address the convergence of approximate solutions in Section~\ref{sec_Convergence}. Finally, we present some numerical experiments in Section~\ref{sec_numerics}.

\section{Numerical scheme}\label{sec_Notations}

We introduce the basic notations, mesh, space and time discretizations,
and, finally, we define the numerical scheme along with some useful (in)equalities.

\subsection{Space discretization}

{\bf Mesh.}
A discretization of $\Omega$ is given by $\mathcal{M}=(\grid, \faces)$, where:
\begin{itemize}[wide=0pt,topsep=0pt]
\item The primary grid $\grid$ is the set of all compact
 regular quadrilateral elements $K$ such that
\[
\Omega = \bigcup_{K \in \grid} K.
\]
Let $h_i$ be the mesh size in the $i$-th Cartesian direction, and $h=\max_{i=1,\ldots,d} h_i$ be the mesh size.
The mesh is regular in the sense that there exists  a positive $\eta_h$ such that $ \eta_h = \max_{i=1,\ldots,d} \left\{\frac{h}{h_i}\right\}$.

\item We denote by $\faces$ the set of all faces, and by $\facesi$ the set of all faces that  are orthogonal to the standard basis vector $\bfe_i, \ i = 1,\ldots,d,$ of the Cartesian coordinate system.  By  $\facesK$ we  denote the set of faces of an element $K,$ and define $\facesKi=\facesK \cap \facesi$. We further denote by $\vc{n}$ the outer normal vector of a generic face $\sigma \in \faces$. By $\bfx_K$ and  $\bfx_\sigma$ we denote the position of the mass centers of an element $K \in \grid$ and a face $\sigma \in \faces,$ respectively.

\item  The intersection $K\cap L,$ for $K,\ L \in \grid,\ K\neq L$, is either a vertex, or an edge, or a face $\sigma \in \faces$. For any $\sigma \in \faces$ we write $\sigma=K|L$ if  $\sigma=\facesK \cap \faces(L)$, and further write $\sigma =\overrightarrow{K \vert L}$ if
$\bfx_L= \bfx_K +h_i \bfe_i$ or  $\bfx_L= \bfx_K +(h_i- 1) \bfe_i$ for any $\sigma \in \facesi$.
Similarly, we write $K=\overrightarrow{[\sigma \sigma']}$ for $\sigma,  \sigma' \in \facesKi$ if $\bfx_{\sigma'} = \bfx_\sigma + h_i \bfe_i$.
For any $\sigma=K|L \in \facesi,\ i\in{1,\ldots,d}$, we also denote  by $d_{\sigma}=h_i$  the periodic distance between the points $\bfx_K$ and $\bfx_L$.

\item  By $|K|$ and $|\sigma|$ we denote the ($d$-- and $(d-1)$--dimensional) Lebesgue measure of an element $K$, and a face $\sigma$, respectively.  Obviously,  $|K| = h_i |\sigma|$ for any $\sigma \in \facesKi$.  In what follows, we shall suppose
\[
|K| \approx  h^d,\ |\sigma| \approx h^{d-1} \ \mbox{for any}\ K \in \grid, \
\sigma \in \faces.
\]
\end{itemize}

\noindent
{\bf Function space.}
In order to define a finite volume scheme we introduce the space of piecewise constant functions $Q_h$ defined on the primary grid $\grid$.
We also introduce a standard projection operator
\begin{equation*}
\Pim: \, L^1(\Omega) \rightarrow Q_h. \quad
\Pim  \phi  = \sum_{K \in \grid} 1_{K} \frac{1}{|K|} \int_K \phi \dx
.
\end{equation*}
For a piecewise (elementwise) continuous function $v$ we define
\[
v^{\rm out}(x) = \lim_{\delta \to 0+} v(x + \delta \vc{n}),\
v^{\rm in}(x) = \lim_{\delta \to 0+} v(x - \delta \vc{n}),\
\Ov{v}(x) = \frac{v^{\rm in}(x) + v^{\rm out}(x) }{2},\
\jump{ v }  = v^{\rm out}(x) - v^{\rm in}(x)
\]
whenever $x \in \sigma \in \facesint$.  Hereafter we mean by $\vv\in Q_h$ that $\vv\in Q_h(\Omega;R^d),$ i.e.,  $v_i\in Q_h,$ for all $i=1,\ldots,d.$

\paragraph{Diffusive upwind flux.}
Given the velocity field $\vv \in \bQh$, the upwind  flux for any function $r\in Q_h$ is defined at each face  $\sigma \in \facesint$ by
\begin{align*}\label{Up}
Up [r, \vv]   =r^{\rm up} \vv \cdot \vc{n}
=r^{\rm in} [\Ov{\vv} \cdot \vc{n}]^+ + r^{\rm out} [\Ov{\vv} \cdot \vc{n}]^-
= \Ov{r} \ \Ov{\vv} \cdot \vc{n} - \frac{1}{2} |\Ov{\vv} \cdot \vc{n}| \jump{r},
\end{align*}
where
\begin{equation*}
[f]^{\pm} = \frac{f \pm |f| }{2} \quad \mbox{and} \quad
r^{\rm up} =
\begin{cases}
 r^{\rm in} & \mbox{if} \ \Ov{\vv} \cdot \vc{n} \geq 0, \\
r^{\rm out} & \mbox{if} \ \Ov{\vv} \cdot \vc{n} < 0.
\end{cases}
\end{equation*}
Furthermore, we consider a diffusive numerical flux function of the following form
\begin{align}\label{num_flux}
F_h(r,\vv)
={Up}[r, \vv] - h^{\eps} \jump{ r }, \, \eps>0.
\end{align}
When $r$ is a vector function, e.g. $r= \vr \vu$ in the momentum equation, we write the above numerical flux in bold font as $ {\bf F}_h(\vr \vu, \vv) \equiv \big( F_h(\vr u_1, \vv), \ldots, F_h (\vr u_d, \vv)  \big)^T$  and $ {\bf Up}(\vr \vu, \vv) \equiv \big( Up(\vr u_1, \vv), \ldots, Up (\vr u_d, \vv)  \big)^T$.

{\bf Discrete divergence.}
We define the discrete divergence operator as
\begin{equation}\label{div_mesh}
\Divh \vuh (\bfx) := \sum_{K \in \grid}  (\Divh \vuh)_K 1_K, \quad (\Divh \vuh)_K :=
\frac{1}{|K|}\sum_{\sigma\in \facesK}|\sigma| \Ov{\vuh} \cdot \vc{n},\  \mbox{ for all } \ \vuh \in Q_h.
\end{equation}
%

\subsection{Time discretization}
For a given time step $\TS \approx h>0$,
we denote the approximation of a function $v_h$ at time $t^k= k\TS$ by $v_h^k$ for $k=1,\ldots,N_T(=T/\TS)$.  The time derivative is discretized by the backward Euler method,
\[
  D_t v_h^k = \frac{v_h^k- v_h^{k-1}}{\TS},\ \mbox{ for } k=1,2,\ldots, N_T.
\]
Furthermore,  we introduce the piecewise constant extension of discrete values,
\begin{equation*}
\begin{aligned}
\vrh(t,\cdot) &=\vrh^0 \mbox{ for } t<\TS,\ &\vrh(t,\cdot)=\vrh^k \mbox{ for } t\in [k\TS,(k+1)\TS),\ k=1,2,\ldots,N_T,\\
\vuh(t,\cdot) &=\vuh^0 \mbox{ for } t<\TS,\ &\vuh(t,\cdot)=\vuh^k \mbox{ for } t\in [k\TS,(k+1)\TS),\ k=1,2,\ldots,N_T,
\end{aligned}
\end{equation*}
and $p_h=p(\vrh),$  for which the discrete time derivative then reads
\[
 D_t v_h = \frac{v_h (t,\cdot) - v_h(t - \Delta t,\cdot)}{\TS} .
\]
We shall write  $A \aleq B$ if $A \leq cB$ for a generic positive constant $c$ independent of $h.$

\subsection{Numerical scheme}
Using the above notation we introduce the  implicit  finite volume scheme to approximate system \eqref{ns_eqs}.
\begin{Definition}[Numerical scheme]
  Given the initial values  $(\vrh^0,\vuh^0) =(\Pim\vr_0, \Pim\vu_0),$ find $(\vr_h,\vu_h) \in Q_h\times Q_h$ satisfying for $k=1,\ldots,N_T$ the following equations
\begin{subequations}\label{scheme}
\begin{align}
&\intO{ D_t \vrh^k \phi_h } - \sum_{ \sigma \in \facesint } \intSh{  F_h(\vrh^k,\vuh^k)
\jump{\phi_h}   } = 0 \quad \mbox{for all } \phi_h \in Q_h,\label{scheme_den}\\
&\intO{ D_t  (\vrh^k \vuh^k) \cdot \bfphi_h } - \sum_{ \sigma \in \facesint } \intSh{ {\bf F}_h(\vrh^k \vuh^k,\vuh^k)
\cdot \jump{\bfphi_h}   }- \sum_{ \sigma \in \facesint } \intSh{ \Ov{p_h^k} \vc{n} \cdot \jump{ \bfphi_h }  } \nonumber \\
&= - \mu \sum_{ \sigma \in \facesint } \intSh{ \frac{1}{d_\sigma}\jump{ \vuh^k }  \cdot \jump{ \bfphi_h }  }
- (\mu +\lambda)  \intO{\Divh   \vuh^k  \; \Divh \bfphi_h }
\quad \mbox{for all }
\bfphi_h \in \bQh. \label{scheme_mom}
\end{align}
\end{subequations}
\end{Definition}
 The weak formulation \eqref{scheme}  of the scheme can be rewritten in the standard  per cell finite volume formulation
for all $K \in \grid,$
\begin{equation}\label{scheme_fv}
\begin{aligned}
&D_t \vr^k _K + \sum_{\sigma \in \facesK} \frac{|\sigma|}{|K|} F_h(\vrh^k,\vuh^k) =0,
 \\
&D_t (\vrh^k \vuh^k)_K + \sum_{\sigma \in \facesK} \frac{|\sigma|}{|K|}
\left({\bf F}_h(\vrh^k \vuh^k,\vuh^k)  + \Ov{p_h^k} \vc{n}
- \mu \frac{\jump{\vuh^k}}{d_\sigma}
-(\mu + \lambda) \Ov{\Divh \vuh^k} \vc{n}\right) =0.
\end{aligned}
\end{equation}

\begin{Remark}
Let us explain the role of $h^\eps$-terms in  \eqref{scheme_fv}. Clearly, they are additional diffusion terms
\[ \sum_{\sigma \in \facesK} \frac{|\sigma|}{|K|} h^\eps\jump{r_h} =h^{\eps+1} (\Delta_h r_h)_K.
\]
In this paper we require $ 0 < \eps < \min\{1, 2(\gamma -1)\}   $  as a compromise
between minimality of the additional numerical diffusion and necessary consistency estimates, see Section~4.
\end{Remark}

The approximate solutions resulting from scheme \eqref{scheme} enjoy the following properties:

\begin{itemize}[wide=0pt]
\item[1.] {\bf Conservation of mass.}\\
Taking $\phi_h \equiv 1$ in the equation of continuity (\ref{scheme_den}) yields the total mass conservation
\begin{equation*}
\intO{ \vrh (t, \cdot) } = \intO{ \vr_{h}^0 } > 0,\ t \geq 0.
\end{equation*}
\item[2.] {\bf Existence of numerical solution.}\\
The discrete problem
\eqref{scheme} admits a solution $(\vrh^k, \vuh^k)$  for any $k = 1,\ldots,N_T$. We refer a reader to \cite[Theorem 3.5]{hoseksheMAC} for the proof, as it can be done exactly in the same way.
\item[3.] {\bf Positivity of numerical density.} \\
Any solution $(\vrh^k, \vuh^k)$ to
\eqref{scheme} satisfies $\vrh^k>0$ provided $\vrh^{k-1}>0$,  $k = 1,\ldots, N_T$, see \cite[Lemma 3.2]{hoseksheMAC} for the proof.
\end{itemize}

\subsection{Preliminaries}

To investigate theoretical properties of our finite volume method it is convenient to define a dual grid.
We emphasize that the dual grid is not needed for the implementation of the scheme.

\noindent {\bf Dual grid.} A dual element  $D_\sigma$ is associated to a generic face $\sigma=K|L\in \facesint$, where $D_\sigma = D_{\sigma,K} \cup D_{\sigma,L}$, and $D_{\sigma,K}$ (resp.~$D_{\sigma,L}$) is built by half of $K$ (resp.~$L$), see Figure~\ref{fig:mesh} for an example of such cell.  We denote the set of all dual cells as $\dgrid$.
Furthermore, we define $\dgrid_i = \{ D_\sigma \}_{\sigma \in \facesi}, i = 1,\ldots, d$. Note that for each $i$ it holds that
$ \Omega = \bigcup_{\sigma \in \facesi} D_\sigma $.

\begin{figure}[!h]
\centering
\begin{tikzpicture}[scale=1.0]
\draw[-,very thick](0,-2)--(5,-2)--(5,2)--(0,2)--(0,-2)--(-5,-2)--(-5,2)--(0,2);

\draw[-,very thick, green=90!, pattern=north west lines, pattern color=green!30] (0,-2)--(2.5,-2)--(2.5,2)--(0,2)--(0,-2);
\draw[-,very thick, blue=90!, pattern= north east  lines, pattern color=blue!30] (0,-2)--(0,2)--(-2.5,2)--(-2.5,-2)--(0,-2);

\path node at (-3.5,0) { $K$};
\path node at (3.5,0)  { $L$};
\path node at (-2.5,0) {$ \bullet$};
\path node at (-2.8,-0.3) {$ \bfx_K$};
\path node at (2.5,0) {$\bullet$};
\path node at (2.7,-0.3) {$ \bfx_L$};
\path node at (0,0) {$\bullet$};
\path node at (0.3,-0.3) {$ \bfx_\sigma$};

\path (-0.4,0.8) node[rotate=90] { $\sigma=\overrightarrow{K|L}$};
 \path (-1.5,1.4) node[] { $D_{\sigma,K}$};
 \path (1.5,1.4) node[] { $D_{\sigma,L}$};
 \end{tikzpicture}
\caption{Dual grid}
 \label{fig:mesh}
\end{figure}
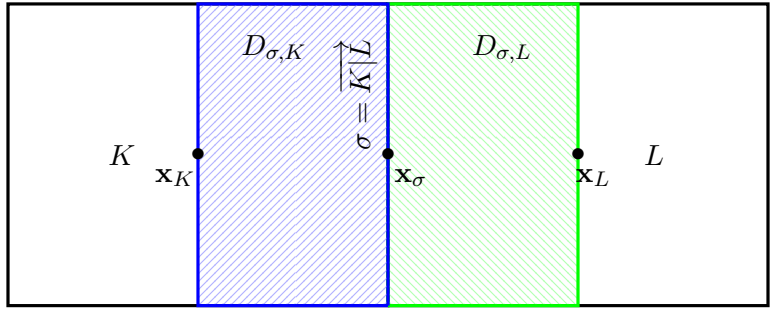
Let $W_h^{(i)},$ $i = 1,\ldots,d,$ be the space of piecewise constant functions defined on the dual grid $\dgrid _i$. By $\bfq=(q_{1}, \ldots, q_{d}) \in \bWh:=\big( W_h^{(1)}, \ldots,  W_h^{(d)} \big)$ we mean that $q_{i} \in W_h^{(i)}$, for all $i=1, \ldots, d$.
We define the standard projection of $\phi \in L^1(\Omega)$  into the discrete functional spaces  $W_h,$
\begin{equation*}
\Pid: \quad L^1(\Omega) \rightarrow W_h, \quad
\Pid  = (\Pid^{(1)}, \ldots, \Pid^{(d)}), \quad \Pid^{(i)} \phi
= \sum_{\sigma \in \facesi}  \frac{1_{D_\sigma}}{|D_\sigma|} \int_{D_\sigma} \phi \dx .
\end{equation*}

\noindent{\bf Discrete differential operators.} We need some discrete operators that are not directly used to discretize the Navier-Stokes system, but are essential to establish the consistency formulation in Section~\ref{sec_Consistency}. For any $r_h \in Q_h$ and $\bfq_h=(q_{1,h},\ldots, q_{d,h})\in W_h$, we define the  difference operators based on the dual grid
\begin{equation*}
\pdedgei r_h(\bfx) := \sum_{\sigma \in \facesi} 1_{D_\sigma}\left(\pdedge^{(i)}r_h\right)_{D_\sigma},
\quad  \left(\pdedge^{(i)}r_h\right)_{D_\sigma} :=\frac{r_L - r_K}{ d_\sigma}, \; \mbox{ for all } \; \sigma =\overrightarrow{K|L} \in \facesinti,
\end{equation*}
and the primary grid
\begin{equation*}
\pdmeshi q_{ i,h}(\bfx) := \sum_{K \in \grid} \left(\pdmesh^{(i)}q_{i,h}\right)_{K}1_K,\; \ i = 1,\ldots, d,
\end{equation*}
where
\[\left(\pdmesh^{(i)}q_{i,h} \right)_{K} :=\frac{ q_{i,h}|_{\sigma'} - q_{i,h}|_{\sigma}}{h} , \; \mbox{ for all } \; \sigma, \sigma' \in \facesinti \mbox{ and } K=\overrightarrow{[\sigma \sigma']}.
\]
Using the above notations we define the gradient operators for $\ r_h \in Q_h$ and $\bfq_h \in \bWh$  by
\begin{equation*}
\Gradedge r_h(\bfx) := (\pdedge ^{(1)}r_h, \ldots, \pdedge ^{(d)} r_h )(\bfx)
\quad \mbox{ and } \quad
\Gradmesh \bfq_{h}  := (\pdmesh ^{(1)} q_{1,h},  \ldots, \pdmesh^{(d)} q_{d,h})(\bfx),
\end{equation*}
respectively.
Note that the divergence operator $\Divh$ defined in \eqref{div_mesh} can be  rewritten  for all $\vuh  \in Q_h $
\begin{equation} \label{divdiv}
\Divh  \vuh = \sum_{i=1}^d \pdmeshi \Ov{u_{i,h}},
\end{equation}
which for  a regular rectangular grid is equivalent to
\begin{equation*}
\Divh  \vuh = \sum_{i=1}^d \pdmeshi \left( \Pid^{(i)} \vuh \right)  .
\end{equation*}
Moreover, we define the discrete Laplace operator for $r_h \in Q_h$ on the primary grid
\begin{equation*}
 \Delta_h r_h(\bfx) = \sumi \Delta_h^{(i)} r_h(\bfx)=  \sum_{K\in \grid} (\Delta_h r_h)_K 1_K , \quad
 \Delta_h^{(i)} r_h(\bfx) = \sum_{K\in \grid} (\Delta_h^{(i)} r_h)_K 1_K ,
\end{equation*}
where $i =  1, \ldots, d$, and
\[  (\Delta_h^{(i)} r_h)_K :=  \frac{1}{|K|} \sum_{\sigma \in \facesKi}  |\sigma| \frac{\jump{r_h}}{d_\sigma},
  \quad (\Delta_h r_h)_K :=
 \frac{1}{|K|} \sum_{\sigma \in \facesK}  |\sigma| \frac{\jump{r_h}}{d_\sigma}, \; \mbox{ for all } \; K \in \grid.
\]
In addition, it is worth mentioning that
\[  \Delta_h^{(i)} r_h  = \pdmeshi ( \pdedgei r_h) ,  \, i = 1, \ldots, d.
\]

\noindent{\bf Integration by parts.}
 Let us start with recalling the algebraic identity
\begin{equation}\label{avg_diff}
\Ov{u_h v_h} - \Ov{u_h}\ \Ov{v_h} =\frac14 \jump{u_h} \jump{v_h}
\end{equation}
together with the product rule
\begin{equation}\label{product_rule}
\jump{ u_h v_h }  = \Ov{u_h} \jump{v_h}  + \jump{u_h}  \Ov{v_h}\; ,
\end{equation}
which are valid for any $u_h, v_h\, \in Q_h.$
A direct application of the product rule \eqref{product_rule} further implies
 \begin{equation}\label{basic_eq1}
\jump{r_h \bfv_h} \cdot \jump{\bfv_h} - \frac12 \jump{r_h} \jump{|\bfv_h|^2} =\Ov{r_h} |\jump{\bfv_h}|^2
 \mbox{ for } r_h \in Q_h,\ \bfv_h \in \bQh,
\end{equation}
and  the following lemma.
\begin{Lemma}  For  any $ r_h\in Q_h$ and $ \bfv_h \in \bQh$ it holds
\begin{equation}\label{basic_eq2}
 \sum_{ \sigma \in \facesint } \intSh{ \left( \Ov{r_h} \jump{ \bfv_h } +\Ov{\bfv_h }  \jump{r_h  }   \right) \cdot \vc{n} }
=0.
\end{equation}
\end{Lemma}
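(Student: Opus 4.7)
The plan is to collapse the two summands in the integrand into a single jump, and then use the fact that, on the periodic (boundary-less) mesh considered here, the sum of jump-fluxes of any piecewise constant vector field over all faces vanishes. This is the discrete analogue of $\int_\Omega \Div(r\bfv)\dx = 0$ on the torus.

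First, I would apply the product rule \eqref{product_rule} componentwise to the vector $r_h \bfv_h$, which gives
\[
\Ov{r_h}\jump{\bfv_h}+\jump{r_h}\Ov{\bfv_h} = \jump{r_h \bfv_h}.
\]
Setting $\bfw_h := r_h \bfv_h \in \bQh$, the identity to be proved reduces to
\[
\sum_{\sigma \in \facesint} \intSh{\jump{\bfw_h}\cdot \vc{n}} = 0.
\]

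Next, I would rewrite this face-sum as an element-sum. Denote by $\vc{n}_{K,\sigma}$ the unit outer normal to $\sigma \in \facesK$. Since $\bfw_h|_K$ is a constant vector on the cuboid $K$ and $\sum_{\sigma \in \facesK}|\sigma|\vc{n}_{K,\sigma} = \vc{0}$ by the divergence theorem for constant fields, we get
\[
\sum_{\sigma \in \facesK} \int_\sigma \bfw_h|_K \cdot \vc{n}_{K,\sigma}\ds = 0 \quad \mbox{for every } K \in \grid.
\]
Summing this identity over $K \in \grid$ and regrouping contributions face by face, each internal face $\sigma=K|L$ is visited exactly once from each side with opposite normals, so the two one-sided traces combine into $(\bfw_h|_L - \bfw_h|_K)\cdot \vc{n} = \pm\jump{\bfw_h}\cdot \vc{n}$ once an orientation of $\vc{n}$ is fixed per face. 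The periodic boundary conditions ensure $\facesint = \faces$, so there are no unpaired boundary terms and the desired equality follows.

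There is no genuine analytic difficulty here: the argument is a pure discrete integration by parts. The only delicate point is keeping orientations consistent when translating between the element-local normal $\vc{n}_{K,\sigma}$ and the globally fixed face normal $\vc{n}$ that enters the definition of the jump. With this bookkeeping in place, the lemma follows at once.
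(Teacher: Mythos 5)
Your proof is correct and follows essentially the same route as the paper: both apply the product rule \eqref{product_rule} to collapse the integrand into $\jump{r_h\bfv_h}\cdot\vc{n}$, regroup the face-sum element by element, and conclude from $\sum_{\sigma\in\facesK}\int_\sigma \vc{n}\,\mathrm{d}S=0$ on each cell. The explicit remarks about orientation bookkeeping and the absence of boundary faces on the periodic mesh are details the paper leaves implicit, but the argument is the same.
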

\begin{proof}  For the functions $r_h,$ $\vv_h$ that are constant on each element $K\in\grid$,  it holds that
\[ \sum_{ \sigma \in \facesint }  \intSh{ \left( \Ov{r_h} \jump{ \bfv_h } +\Ov{\bfv_h }  \jump{r_h  }   \right) \cdot \vc{n} }
=  \sum_{ \sigma \in \facesint } \intSh{  \jump{r_h  \bfv_h}   \cdot \vc{n} }
=-\sum_{K\in \grid}  r_K \bfv_K \cdot \sum_{ \sigma \in \facesK } \int_\sigma  \vc{n} \ds
=0.
\]
\end{proof}
Consequently,  for any $r_h, \phi_h \in Q_h$ and $ \bfq_h \in W_h$, it is easy to
 observe the following  discrete integration by parts formulae
\begin{subequations}\label{int_by_part}
\begin{equation}\label{int_by_part_las}
\intO{\Delta_h r_h \phi_h}
=-\intO{ \Gradedge r_h \cdot \Gradedge \phi_h }= \intO{ r_h \Delta_h \phi_h},
\end{equation}
\begin{equation}\label{int_by_part_gradm}
\intO{q_{i,h} \pdedgei r_h} = - \intO{  r_h \pdmeshi q_{i,h}},\;\mbox{ for all }\; i = 1,\ldots,d.
\end{equation}
\end{subequations}
 {\bf Useful estimates.}
Next, we list some basic inequalities used  in the numerical analysis.
We assume the reader is fairly familiar with this matter, for which we refer to the monograph \cite{EyGaHe}, and the article paper \cite{GallouetMAC}. If
$\phi \in C^1(\Ov{\Omega})$ we have
\begin{equation} \label{n4c}
  \Big|  \jump{ \Pim  \phi  }   \Big|_{\sigma} \lesssim h \| \phi \|_{C^1},\
\  \mbox{for any}\ x \in \sigma \in \facesint, \mbox{ and }
\norm{\phi - \Pim  \phi }_{L^p(\Omega)} \lesssim h \| \phi \|_{C^1}.
\end{equation}
Furthermore, if $\phi \in C^2(\Ov{\Omega})$  we have for all $1< p \leq \infty$
\begin{align}
\norm{ \Grad \phi - \Gradedge \Pim \phi  }_{L^p}  \lesssim h, \quad
\norm{ \Gradedge \Pim \phi  }_{L^p}  \lesssim  \norm{\phi}_{C^1} +h,
\label{n4c2_edge}\\
\norm{ \Grad \phi - \Gradmesh  \Pid \big(\Pim \phi\big)  }_{L^p}  \lesssim h, \quad
\norm{ \Div \phi - \Divh  (\Pim \phi )   }_{L^p}  \lesssim h.\label{n4c2_mesh}
\end{align}
 If in addition,  $\phi \in C^3(\Ov{\Omega})$ we get
\begin{equation} \label{n4c3}
 \norm{ \Delta_h \Pim  \phi  - \Delta_x \phi }_{L^p} \lesssim h \| \phi \|_{C^3}, \quad
 \norm{ \Delta_h \Pim  \phi  }_{L^p} \lesssim  \| \phi \|_{C^2} + h\| \phi \|_{C^3} , \, \mbox{ for all } \ 1< p \leq \infty.
\end{equation}
 The inverse estimates \cite{ciarlet} for $r_h \in Q_h $ read
\begin{equation} \label{inverse_inequality}
\| r_h \|_{L^p(\Omega)} \lesssim h^{ d (\frac1p -\frac1q)  } \|r_h \|_{L^q(\Omega)} \ \mbox{ for any }\  1 \leq q \leq p \leq \infty.
\end{equation}
 Finally, we need a discrete analogous of the Sobolev-type inequality that can be proved exactly as \cite[Theorem 11.23]{FeiNov_book}.
\begin{Lemma}[Sobolev inequality]
Let the function $r \geq 0$ be such that
\[ 0< \intO{r} =c_M, \mbox{ and } \intO{r^\gamma}\leq  c_E \mbox{ for } \gamma>1,
\]
where $c_M$ and $c_E$ are some positive constants.
Then the following Poincar{\'e}-Sobolev type inequality holds true
\begin{equation}\label{sobolev_inequ}
\norm{ v_h }_{L^6(\Omega)} \leq c \norm{\Gradedge v_h }_{L^2(\Omega)}^2
+c \left(\intO{r|v_h |}\right)^2
\aleq  c \norm{\Gradedge v_h }_{L^2(\Omega)}^2  +c_M
+c \intO{r|v_h |^2}
\end{equation}
for any $v_h \in Q_h$, where the constant $c$ depends on $c_M$ and $c_E$ but not on the mesh parameter.
\end{Lemma}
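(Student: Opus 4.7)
The inequality is a discrete counterpart of the weighted Poincar\'e--Sobolev estimate that is standard in the theory of weak solutions to the compressible Navier--Stokes system (cf.~\cite[Theorem~11.23]{FeiNov_book}). The plan is to split $v_h$ into its spatial mean and the oscillation around it, control each piece separately, and recombine via the triangle inequality; I read the stated left-hand side as $\norm{v_h}_{L^6(\Omega)}^2$ (with the linear $L^6$ norm as written, the scaling $v_h\mapsto \lambda v_h$ would make the inequality fail for $\lambda$ small).

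Setting $\overline{v_h}:=|\Omega|^{-1}\intO{v_h}$, the oscillation $v_h-\overline{v_h}$ is controlled by the discrete Sobolev--Poincar\'e embedding for piecewise constants on quasi-uniform rectangular meshes, which I would import from \cite{EyGaHe} in the form
\[
\norm{v_h-\overline{v_h}}_{L^6(\Omega)}^2 \aleq \norm{\Gradedge v_h}_{L^2(\Omega)}^2.
\]
For the mean I would exploit the weight $r$: since $\intO{r}=c_M$,
\[
c_M\,\overline{v_h}=\intO{r\,\overline{v_h}}=\intO{r\,v_h}-\intO{r(v_h-\overline{v_h})},
\]
so H\"older with exponents $\gamma,\gamma'=\gamma/(\gamma-1)$ together with $\norm{r}_{L^\gamma(\Omega)}\le c_E^{1/\gamma}$ gives
\[
c_M|\overline{v_h}|\leq \intO{r|v_h|}+c_E^{1/\gamma}\,\norm{v_h-\overline{v_h}}_{L^{\gamma'}(\Omega)}.
\]
Interpolating $L^{\gamma'}$ between $L^1$ and $L^6$ and reusing the embedding above (combined with the vanishing mean of $v_h-\overline{v_h}$, which via Poincar\'e controls its $L^1$ norm) converts the rightmost term into a constant multiple of $\norm{\Gradedge v_h}_{L^2}$.

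Assembling the two pieces via
\[
\norm{v_h}_{L^6(\Omega)}^2\leq 2\norm{v_h-\overline{v_h}}_{L^6(\Omega)}^2+2|\Omega|^{1/3}|\overline{v_h}|^2,
\]
and absorbing any residual $\norm{\Gradedge v_h}_{L^2}$ contribution from the mean estimate by Young's inequality, yields the first inequality of the lemma (with $c$ depending only on $c_M,\,c_E,\,\gamma,\,\Omega$). The second inequality is immediate from Cauchy--Schwarz,
\[
\Bigl(\intO{r|v_h|}\Bigr)^2\leq \Bigl(\intO{r}\Bigr)\Bigl(\intO{r|v_h|^2}\Bigr)=c_M\intO{r|v_h|^2},
\]
which also explains the multiplicative role of $c_M$ on the right-hand side.

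The step I expect to be the main obstacle is the uniform-in-$h$ discrete Sobolev--Poincar\'e embedding for piecewise constants, $\norm{v_h-\overline{v_h}}_{L^6}\aleq \norm{\Gradedge v_h}_{L^2}$: its proof rests on an $H^1$-conforming reconstruction of $v_h$ (or a direct discrete BV/Nirenberg-type argument) and a careful tracking of the mesh-regularity constant $\eta_h$. Once this embedding is in hand, the remainder is essentially the continuous Feireisl--Novotn\'y argument transcribed to the discrete setting.
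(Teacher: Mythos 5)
Your overall strategy --- split $v_h$ into its mean plus the oscillation, control the oscillation by a discrete Sobolev--Poincar\'e embedding and the mean by testing against the weight $r$ --- is precisely the skeleton of the continuous argument the paper invokes; note that the paper offers no proof of its own, delegating everything to \cite[Theorem 11.23]{FeiNov_book}. Your reading of the left-hand side as $\norm{v_h}_{L^6}^2$ is also correct: as printed the inequality is not scale-consistent and fails for $v_h\equiv\lambda$ with $\lambda$ small, so the missing square is a typo in the statement. The Cauchy--Schwarz step $\left(\intO{r|v_h|}\right)^2\le c_M\intO{r|v_h|^2}$ for the second inequality is fine.

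There is, however, one genuine gap. After writing $c_M|\overline{v_h}|\le\intO{r|v_h|}+\norm{r}_{L^\gamma}\norm{v_h-\overline{v_h}}_{L^{\gamma'}}$ you propose to control $\norm{v_h-\overline{v_h}}_{L^{\gamma'}}$ by interpolating $L^{\gamma'}$ between $L^1$ and $L^6$. This requires $\gamma'=\gamma/(\gamma-1)\le 6$, i.e.\ $\gamma\ge 6/5$. The lemma assumes only $\gamma>1$ and the paper's main theorems work with $\gamma\in(1,2)$, so the range $\gamma\in(1,6/5)$ in $d=3$ is genuinely needed, and there your argument breaks down: $W^{1,2}\not\hookrightarrow L^{\gamma'}$ when $\gamma'>6$, discretely or continuously. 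The standard repair, which is the one underlying \cite[Theorem 11.23]{FeiNov_book}, is to use the two hypotheses on $r$ to produce a set of a priori positive measure on which $r$ is bounded below: with $\delta=c_M/(2|\Omega|)$ and $V=\{r\ge\delta\}$, the splitting $c_M=\int_V r+\int_{\Omega\setminus V}r\le c_E^{1/\gamma}|V|^{1/\gamma'}+\delta|\Omega|$ yields $|V|\ge V_0(c_M,c_E,\gamma,|\Omega|)>0$. Then $\delta\int_V|v_h|\le\intO{r|v_h|}$, and the mean is estimated through $|V|\,|\overline{v_h}|\le\int_V|v_h|+\norm{v_h-\overline{v_h}}_{L^1(\Omega)}$, whose last term is handled by the discrete Poincar\'e inequality in $L^1$; no exponent $\gamma'$ applied to $v_h$ ever appears, and the argument covers every $\gamma>1$. (A compactness/contradiction argument also works but is less convenient if one wants constants uniform in $h$.) The remaining ingredient you flag, the $h$-uniform discrete embedding $\norm{v_h-\overline{v_h}}_{L^6}\aleq\norm{\Gradedge v_h}_{L^2}$ on the periodic rectangular grid, is indeed the standard input from \cite{EyGaHe} and is not where the difficulty lies.
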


The following lemma shall be useful for analysing the error between the continuous convective term and its numerical analogue.
\begin{Lemma}\label{lem_convective_trans}
For any $r_h , \bfv_h  \in \bQh$, and $\phi \in C^1(\Omega)$, it holds
\begin{align*}
&\intO{ r_h \bfv_h  \cdot \Grad \phi } - \sum_{\sigma \in \facesint} \intSh{ F_h [ r_h,\bfv_h  ] \ \jump{\Pim \phi }  }
\\&=
 \sum_{\sigma \in \facesint} \intSh{
\left( \frac{1}{2} |\Ov{\bfv_h } \cdot \vc{n}|  + h^\eps  + \frac{1}{4} \jump{\bfv_h } \cdot \vc{n}   \right) \jump{r_h }  \jump{\Pim  \phi }  }
+ \intO{ r_h \bfv_h  \cdot \left(\Grad \phi - \Gradmesh \Pid \big(\Pim  \phi\big) \right)}.
\end{align*}
\end{Lemma}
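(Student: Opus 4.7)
The plan is to manipulate the discrete face integrals using the definition of $F_h$ and the algebraic identities from the preliminaries, and then reduce the identity to a discrete integration-by-parts statement.

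\emph{Step 1 (expand the flux).} By the definition \eqref{num_flux},
$$F_h[r_h,\bfv_h] = \Ov{r_h}\,\Ov{\bfv_h}\cdot\vc{n} - \tfrac{1}{2}|\Ov{\bfv_h}\cdot\vc{n}|\jump{r_h} - h^{\eps}\jump{r_h}.$$
Apply the averaging identity \eqref{avg_diff} componentwise to replace $\Ov{r_h}\,\Ov{\bfv_h}$ by $\Ov{r_h\bfv_h} - \tfrac14\jump{r_h}\jump{\bfv_h}$, which yields
$$\Ov{r_h\bfv_h}\cdot\vc{n} - F_h[r_h,\bfv_h] = \left(\tfrac14 \jump{\bfv_h}\cdot\vc{n} + \tfrac12|\Ov{\bfv_h}\cdot\vc{n}| + h^{\eps}\right)\jump{r_h}.$$
Multiplying by $\jump{\Pim\phi}$ and summing over faces produces exactly the diffusion-type term on the right-hand side of the claim. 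Hence the lemma reduces to showing
$$\sum_{\sigma\in\facesint}\intSh{\Ov{r_h\bfv_h}\cdot\vc{n}\,\jump{\Pim\phi}} \;=\; \intO{r_h\bfv_h\cdot\Gradmesh\Pid(\Pim\phi)}.$$

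\emph{Step 2 (discrete IBP, the main point).} I would split the face sum by direction, $\sigma\in\facesi$, and fix the convention $\vc{n}=\bfe_i$, so that for $\sigma=\overrightarrow{K|L}$ one has $\jump{\Pim\phi}_\sigma = \phi_L-\phi_K = d_\sigma(\pdedgei\Pim\phi)|_{D_\sigma}$. On the other hand, because $r_h v_{i,h}$ is piecewise constant on the two half-cells $D_{\sigma,K},D_{\sigma,L}$ of equal measure, the definition of $\Pid^{(i)}$ gives $\Pid^{(i)}(r_h v_{i,h})|_{D_\sigma} = \Ov{r_h v_{i,h}}_\sigma$. Together with $|\sigma|d_\sigma=|D_\sigma|$, this yields
$$\sum_{\sigma\in\facesi}\intSh{\Ov{r_h v_{i,h}}\jump{\Pim\phi}} \;=\; \intO{\Pid^{(i)}(r_h v_{i,h})\,\pdedgei\Pim\phi} \;=\; \intO{r_h v_{i,h}\,\pdedgei\Pim\phi},$$
the last step using that $\pdedgei\Pim\phi$ is constant on each $D_\sigma$.

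\emph{Step 3 (matching the gradient operators).} It remains to show $\intO{u\,\pdedgei v}=\intO{u\,\pdmeshi\Pid^{(i)}v}$ for $u=r_h v_{i,h}$ and $v=\Pim\phi$ in $Q_h$. A direct cell-by-cell computation on each dual cell $D_\sigma$ establishes the discrete Leibniz rule $\Pid^{(i)}u\,\pdedgei v + \Pid^{(i)}v\,\pdedgei u = \pdedgei(uv)$, and since $\intO{\pdedgei(uv)}$ telescopes to zero on the periodic torus, one obtains $\intO{\Pid^{(i)}u\,\pdedgei v} = -\intO{\Pid^{(i)}v\,\pdedgei u}$. Combining this with \eqref{int_by_part_gradm} applied to the pair $(\Pid^{(i)}v, u)$ gives $\intO{u\,\pdmeshi\Pid^{(i)}v} = -\intO{\Pid^{(i)}v\,\pdedgei u} = \intO{\Pid^{(i)}u\,\pdedgei v} = \intO{u\,\pdedgei v}$, as needed. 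Summing over $i$ produces the target identity of Step~1, and adding $\intO{r_h\bfv_h\cdot\Grad\phi}$ on both sides and rearranging concludes the proof.

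The main technical obstacle is the identity in Step~3: it is the only place where one needs the compatibility between the edge-based gradient $\Gradedge$ acting on $\Pim\phi$ and the cell-based gradient $\Gradmesh\Pid$ of the same projected data. The proof relies on the periodic boundary and the equal weighting of half-cells in $\Pid^{(i)}$; everything else is a bookkeeping reshuffling of the flux decomposition.
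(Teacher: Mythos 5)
Your proposal is correct, and it follows essentially the same route as the paper: the same reduction of the lemma to the single identity $\sum_{\sigma\in\facesint}\intSh{\Ov{r_h\bfv_h}\cdot\vc{n}\,\jump{\Pim\phi}}=\intO{r_h\bfv_h\cdot\Gradmesh\Pid(\Pim\phi)}$ via the flux expansion and the averaging identity \eqref{avg_diff}, yielding exactly the same error terms. The only difference is how that central summation-by-parts identity is justified: the paper asserts the per-cell Gauss-type identity $\sum_{K}\int_K r_h\bfv_h\cdot\Gradmesh\Pid(\Pim\phi)\dx=\sum_K\int_{\partial K}r_h\bfv_h\cdot\vc{n}\,\Ov{\Pim\phi}\ds$ and then invokes \eqref{product_rule}--\eqref{basic_eq2}, whereas you route through the dual grid, using $\Pid^{(i)}(r_hv_{i,h})|_{D_\sigma}=\Ov{r_hv_{i,h}}_\sigma$, a discrete Leibniz rule, and \eqref{int_by_part_gradm}; your Steps 2--3 in effect supply an explicit proof of the duality between $\Gradedge$ acting on $\Pim\phi$ and $\Gradmesh\Pid$ that the paper uses without comment, and both verifications are sound on the uniform periodic rectangular grid.
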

\begin{proof}Using the basic equalities \eqref{avg_diff}--\eqref{basic_eq2}, we have
\begin{align*}
\intO{ r_h \bfv_h  \cdot \Grad \phi } &= \sum_{K \in \grid} \int_{K} r_h \bfv_h  \cdot \Grad \phi \ \dx
\\&
= \sum_{K \in \grid} \int_{K} r_h \bfv_h  \cdot (\Grad \phi - \Gradmesh \Pid \big(\Pim  \phi\big)) \dx
+ \sum_{K \in \grid} \int_{\partial K} r_h \bfv_h  \cdot \vc{n} \Ov{\Pim  \phi }  \ds
\\&=
 \intO{ r_h \bfv_h  \cdot (\Grad \phi - \Gradmesh \Pid \big(\Pim  \phi\big) )}
 - \sum_{\sigma \in \facesint} \intSh{ \jump{ r_h \bfv_h  }  \cdot \vc{n} \Ov{\Pim  \phi  } }
\\&=
 \intO{ r_h \bfv_h  \cdot (\Grad \phi - \Gradmesh \Pid \big(\Pim  \phi\big) )}
+ \sum_{\sigma \in \facesint} \intSh{ \Ov{ r_h \bfv_h  }  \cdot \vc{n} \jump{ \Pim  \phi  }  }
\\&=
 \intO{ r_h \bfv_h  \cdot (\Grad \phi - \Gradmesh \Pid \big(\Pim  \phi\big) )}
+ \sum_{\sigma \in \facesint} \intSh{ \left( \Ov{ r_h \bfv_h }   - \Ov{ r_h } \ \Ov{\bfv_h } \right) \cdot \vc{n} \jump{ \Pim  \phi  }  }
\\& \qquad
+\sum_{\sigma \in \facesint} \intSh{ \Ov{ r_h } \ \Ov{\bfv_h }  \cdot \vc{n} \jump{\Pim  \phi }  }
\pm
 \sum_{\sigma \in \facesint} \intSh{
\left( \frac{1}{2} |\Ov{\bfv_h } \cdot \vc{n}|  + h^\eps   \right) \jump{r_h }  \jump{ \Pim  \phi }  }
\\&=
 \intO{ r_h \bfv_h  \cdot (\Grad \phi - \Gradmesh\Pid \big(\Pim  \phi\big) )}
+\sum_{\sigma \in \facesint} \intSh{ \frac14 \jump{ r_h } \jump{\bfv_h }  \cdot \vc{n} \jump{ \Pim  \phi  }  }
\\& \qquad
+
\sum_{\sigma \in \facesint} \intSh{ F_h [ r_h,\bfv_h  ]  \jump{\Pim \phi }  } +
\sum_{\sigma \in \facesint} \intSh{\left( \frac{1}{2} |\Ov{\bfv_h } \cdot \vc{n}|  + h^\eps   \right) \jump{r_h }  \jump{ \Pim  \phi   }} .
\end{align*}
\end{proof}

\section{Stability}\label{sec_Stability}

 In this section we show the stability of the scheme and derive the energy estimates that will be necessary for the consistency formulation in Section~\ref{sec_Consistency}. For simplicity,   we will hereafter denote the norms $\norm{\cdot}_{L^q(\Omega)}$ and $\norm{\cdot}_{L^p(0,T;L^q(\Omega))}$ by $\norm{\cdot}_{L^q}$ and $\norm{\cdot}_{L^pL^q}$, respectively.  We also denote $\co{A}{B}=[\min\{A,B\}, \max\{A,B\}]$.

 To begin, we recall the  discrete internal energy balance, which is a result of the renormalization of the continuity equation, see, e.g.~\cite[Section 4.1]{FeKaNo} or \cite[Lemma 3.1]{hoseksheMAC}. Indeed, multiplying \eqref{scheme_den} by $\Hc'(\vrh^k)$ of the internal energy $\Hc(\vr)=\frac{p(\vr)}{\gamma-1}$ gives rise to the result of the following lemma.
\begin{Lemma}[ Discrete internal energy balance]\label{lem_r1}
Let $(\vrh,\vuh)\in  Q_h \times Q_h$ satisfy the discrete continuity equation \eqref{scheme_den}. Then  there exists $\xi \in \co{\vrh^{k-1}}{\vrh^k}$ and  $\zeta \in \co{\vr_K^k}{\vr_L^k}$ for any $\sigma=K|L \in \facesint$  such that
\begin{equation}\label{r1}
\begin{split}
\intO{ D_t \Hc(\vrh^k)  }
&- \sum_{ \sigma \in \facesint } \intSh{ \Ov{\vuh^k} \cdot \vc{n}  \jump{p(\vrh^k)}  }
\\ &=
- \frac{\TS}2 \intO{ \Hc''(\xi)|D_t \vrh^k|^2  }
- \frac12 \sum_{ \sigma \in \facesint } \intSh{ \Hc''(\zeta) \jump{  \vrh^k } ^2 \left(h^\eps  + | \Ov{\vuh} \cdot \vc{n} | \right)
}
.
\end{split}
\end{equation}
\end{Lemma}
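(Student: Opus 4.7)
The proof follows the classical renormalisation procedure applied at the discrete level, as used in~\cite{FeKaNo} and~\cite{hoseksheMAC}: the plan is to use $\phi_h = \Hc'(\vrh^k)\in Q_h$ as a test function in the discrete continuity equation~\eqref{scheme_den} and collect the resulting terms.

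For the time-derivative term, a second-order Taylor expansion of $\Hc$ between $\vrh^{k-1}$ and $\vrh^k$ on each cell produces, for some $\xi \in \co{\vrh^{k-1}}{\vrh^k}$ (selected cell-wise),
\[
\Hc'(\vrh^k)\, D_t \vrh^k \;=\; D_t \Hc(\vrh^k) + \tfrac{\TS}{2}\Hc''(\xi)\,|D_t\vrh^k|^2,
\]
which contributes $\intO{D_t\Hc(\vrh^k)}$ to the left-hand side and the first term $-\tfrac{\TS}{2}\intO{\Hc''(\xi)|D_t\vrh^k|^2}$ to the right-hand side of~\eqref{r1}.

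For the flux term $\sum_{\sigma} \int_{\sigma} F_h(\vrh^k,\vuh^k)\jump{\Hc'(\vrh^k)}\,\ds$, I would decompose $F_h = \Ov{\vrh^k}\,\Ov{\vuh^k}\cdot\vc{n} - \bigl(\tfrac{1}{2}|\Ov{\vuh^k}\cdot\vc{n}| + h^{\eps}\bigr)\jump{\vrh^k}$. The two dissipative parts, multiplied by $\jump{\Hc'(\vrh^k)}$ and combined with the mean-value identity $\jump{\Hc'(\vrh^k)}=\Hc''(\zeta)\jump{\vrh^k}$ for some $\zeta\in \co{\vr_K^k}{\vr_L^k}$, directly yield the second dissipation term in~\eqref{r1}. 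For the central piece $\Ov{\vrh^k}\jump{\Hc'(\vrh^k)}$, I would invoke the thermodynamic relation $\vr\Hc'(\vr) = \Hc(\vr) + p(\vr)$ together with the product rule~\eqref{product_rule} to write
\[
\Ov{\vrh^k}\jump{\Hc'(\vrh^k)} \;=\; \jump{p(\vrh^k)} \;-\; \bigl[\jump{\vrh^k}\,\Ov{\Hc'(\vrh^k)} - \jump{\Hc(\vrh^k)}\bigr],
\]
and then appeal to the summation-by-parts identity~\eqref{basic_eq2} (applied against $\Ov{\vuh^k}\cdot\vc{n}$) to recover, after summing over faces, the pressure-work term $-\sum_\sigma\int_\sigma \Ov{\vuh^k}\cdot\vc{n}\,\jump{p(\vrh^k)}\,\ds$ appearing on the left of~\eqref{r1}.

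The most delicate point will be handling the ``trapezoidal-rule remainder'' $\jump{\Hc(\vrh^k)} - \jump{\vrh^k}\Ov{\Hc'(\vrh^k)}$ produced in the central-flux step: by Taylor's theorem this quantity is naturally cubic in $\jump{\vrh^k}$ and does not have definite sign. Recasting it into the quadratic $\Hc''(\zeta)\jump{\vrh^k}^2$ form asserted on the right of~\eqref{r1} requires applying Taylor's theorem to $\Hc$ at both endpoints of $[\vr_K^k,\vr_L^k]$ in turn, averaging the resulting expansions, and exploiting the strict positivity and continuity of $\Hc''$ on $(0,\infty)$ so that a single intermediate value $\zeta\in\co{\vr_K^k}{\vr_L^k}$ can be chosen that absorbs every Taylor residual simultaneously; this repackaging step is what ultimately produces the clean closed-form identity~\eqref{r1}.
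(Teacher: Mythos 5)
Your overall strategy --- testing \eqref{scheme_den} with $\Hc'(\vrh^k)$, Taylor--expanding the time difference, and using $p(\vr)=\vr\Hc'(\vr)-\Hc(\vr)$ together with the product rule to extract the pressure jump from the convective flux --- is exactly the route the paper intends (it only sketches the argument, deferring the details to the cited references), and your treatment of the time--derivative term is correct. The gap sits precisely at the step you yourself flag as delicate, and the fix you propose does not close it. Splitting $F_h$ into a central part plus jump parts leaves the signed face term $\Ov{\vuh^k}\cdot\vc{n}\,\bigl(\jump{\Hc(\vrh^k)}-\jump{\vrh^k}\,\Ov{\Hc'(\vrh^k)}\bigr)$. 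Expanding $\Hc$ at both endpoints and averaging, as you suggest, gives $\jump{\Hc}-\jump{\vr}\,\Ov{\Hc'}=\tfrac14\bigl(\Hc''(\zeta_1)-\Hc''(\zeta_2)\bigr)\jump{\vr}^2$, a \emph{difference} of second derivatives with no definite sign; adding this, multiplied by the signed $\Ov{\vuh^k}\cdot\vc{n}$, to $-\tfrac12|\Ov{\vuh^k}\cdot\vc{n}|\,\Hc''(\zeta_3)\jump{\vrh^k}^2$ only places the resulting coefficient in the interval $[\,m-\tfrac12(M-m),\,M+\tfrac12(M-m)\,]$, where $m$ and $M$ are the extrema of $\Hc''$ on $\co{\vr_K^k}{\vr_L^k}$ --- strictly larger than the range of $\Hc''$ --- so the intermediate--value argument you invoke cannot return an admissible $\zeta$. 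The mechanism that actually works is to keep the upwind value intact: since $\vr^{\rm up}\,\Ov{\vu}\cdot\vc{n}=\Ov{\vr}\,\Ov{\vu}\cdot\vc{n}-\tfrac12|\Ov{\vu}\cdot\vc{n}|\jump{\vr}$, your two troublesome pieces recombine into $\vr^{\rm up}\,\Ov{\vu}\cdot\vc{n}\,\jump{\Hc'(\vr)}$, and a \emph{single one--sided} Taylor expansion of $\Hc$ about the downstream state (about $\vr^{\rm out}$ when $\Ov{\vu}\cdot\vc{n}\ge 0$, about $\vr^{\rm in}$ otherwise) yields the exact identity
\begin{equation*}
\vr^{\rm up}\,\Ov{\vu}\cdot\vc{n}\,\jump{\Hc'(\vr)}=\Ov{\vu}\cdot\vc{n}\,\jump{p(\vr)}-\tfrac12\,|\Ov{\vu}\cdot\vc{n}|\,\Hc''(\zeta)\jump{\vr}^2,\qquad \zeta\in\co{\vr_K}{\vr_L}.
\end{equation*}
The sign of the normal velocity selects the expansion point, and that is exactly the information your symmetric averaging discards.

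Two smaller remarks. First, the pressure--work term $\sum_{\sigma}\intSh{\Ov{\vuh^k}\cdot\vc{n}\,\jump{p(\vrh^k)}}$ falls out of the identity above directly; no appeal to \eqref{basic_eq2} is needed, and invoking it would in fact introduce spurious $\jump{\vuh^k}$ terms. Second, the $h^\eps$ part of the flux gives $h^{\eps}\Hc''(\zeta'')\jump{\vrh^k}^2$ by the mean value theorem, i.e.\ with coefficient $h^{\eps}$ rather than $\tfrac12 h^{\eps}$ and with a mean value a priori different from the one produced by the upwind term; this mismatch is already latent in the statement of the lemma and is immaterial for the ensuing estimates, but your claim that the dissipative parts ``directly yield'' the stated term with one common $\zeta$ glosses over it.
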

Next, we recall the renormalization of the transport equation, see \cite[Lemma A.1, Section A.2]{FeKaNo}.
\begin{Lemma}[Discrete renormalized transport equation]
Suppose that  $b_h^k \in Q_h,\ \chi \in C^2(R)$. Then there exists $\xi \in \co{b_h^{k-1}}{ b_h^k},\  \zeta\in \co{b_h^{k}}{(b_h^k)^{\rm out}}$ for any $\phi_h \in Q_h$ such that
\begin{equation} \label{r3}
\begin{split}
&\intO{ D_t (\vrh^k b_h^k) \chi'(b_h^k) \phi_h } - \sum_{ \sigma \in \facesint } \intSh{ {Up}[\vrh^k b_h^k, \vuh^k] \jump{\chi' (b_h^k) \phi_h}  }\\
&= \intO{ D_t \left(\vrh^k \chi(b_h^k) \right) \phi_h } - \sum_{ \sigma \in \facesint } \intSh{ {Up}[\vrh^k \chi (b_h^k), \vuh] \jump{ \phi_h}  }
+\frac{\TS}{2} \intO{ \chi''(\xi)\vrh^{k-1}|D_t b_h^k|^2  \phi_h}
\\
&\quad +\sum_{ \sigma \in \facesint } \intSh{ h^\eps  \jump{\vrh^k} \ \jump{ \left( \chi(b_h^k) - \chi'(b_h^k) b_h^k \right) \phi_h }  }
\\&\quad
-\frac12 \sumEKh{\phi_h  \chi''(\zeta) \jump{b_h^k}^2 (\vrh^k)^{\rm out}  \left[\Ov{\vuh^k} \cdot \vc{n}\right]^- }.
\end{split}
\end{equation}
\end{Lemma}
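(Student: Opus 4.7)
The plan is to derive \eqref{r3} in three steps: apply a discrete chain rule to the time derivative of $\vrh^k b_h^k$, absorb the resulting extra term by testing the continuity equation \eqref{scheme_den}, and finally carry out a face-by-face rearrangement of the upwind convective fluxes. For the time derivative, combining the discrete Leibniz identity $D_t(\vrh^k b_h^k) = b_h^k D_t\vrh^k + \vrh^{k-1} D_t b_h^k$ with Taylor's formula
\[
\chi(b_h^k) - \chi(b_h^{k-1}) = \chi'(b_h^k)(b_h^k - b_h^{k-1}) - \frac{\TS}{2}\chi''(\xi)|D_t b_h^k|^2, \quad \xi\in\co{b_h^{k-1}}{b_h^k},
\]
yields the pointwise identity
\[
D_t(\vrh^k b_h^k)\chi'(b_h^k) = D_t\bigl(\vrh^k\chi(b_h^k)\bigr) + \bigl[b_h^k\chi'(b_h^k)-\chi(b_h^k)\bigr] D_t\vrh^k + \frac{\TS}{2}\vrh^{k-1}\chi''(\xi)|D_t b_h^k|^2 ,
\]
which produces the first Taylor remainder of \eqref{r3}.

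Multiplying the above by $\phi_h$, integrating and substituting into the left-hand side of \eqref{r3}, I eliminate the term involving $D_t\vrh^k$ by testing the discrete continuity equation \eqref{scheme_den} with $\psi_h = [b_h^k\chi'(b_h^k)-\chi(b_h^k)]\phi_h\in Q_h$. Using $F_h = Up - h^\eps\jump{\cdot}$ from \eqref{num_flux}, this produces
\[
\intO{[b_h^k\chi'(b_h^k)-\chi(b_h^k)]D_t\vrh^k\phi_h} = \sum_{\sigma\in\facesint}\intSh{{Up}[\vrh^k,\vuh^k]\jump{\psi_h}} - \sum_{\sigma\in\facesint}\intSh{h^\eps\jump{\vrh^k}\jump{\psi_h}}.
\]
After substitution, the $h^\eps$-term above reappears as the penultimate term of \eqref{r3} (after flipping the sign inside the jump via $\chi(b_h^k)-\chi'(b_h^k)b_h^k = -\psi_h/\phi_h$), while the upwind sum must still be compared with $\sum_\sigma\intSh{{Up}[\vrh^k b_h^k,\vuh^k]\jump{\chi'(b_h^k)\phi_h}}$.

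The third and core step is to establish the face-wise identity
\[
{Up}[\vrh^k b_h^k,\vuh^k]\jump{\chi'(b_h^k)\phi_h} - {Up}[\vrh^k,\vuh^k]\jump{\psi_h} = {Up}[\vrh^k\chi(b_h^k),\vuh^k]\jump{\phi_h} + R_\sigma,
\]
where $R_\sigma$ is the targeted $\chi''$-remainder. Writing $v=\Ov{\vuh^k}\cdot\vc{n}$ and using ${Up}[r,\vuh^k]=r^{\rm in}[v]^+ + r^{\rm out}[v]^-$, one expands the two jumps on the left, collects all terms of the forms $\phi_h^{\rm in}(\cdot)$ and $\phi_h^{\rm out}(\cdot)$, and invokes the two Taylor identities
\begin{align*}
\chi(b_h^{\rm out})-\chi(b_h^{\rm in})-\chi'(b_h^{\rm out})\jump{b_h^k} &= -\tfrac{1}{2}\chi''(\zeta_1)\jump{b_h^k}^2,\\
\chi(b_h^{\rm out})-\chi(b_h^{\rm in})-\chi'(b_h^{\rm in})\jump{b_h^k} &= \phantom{-}\tfrac{1}{2}\chi''(\zeta_2)\jump{b_h^k}^2,
\end{align*}
with $\zeta_1,\zeta_2\in\co{b_h^{\rm in}}{b_h^{\rm out}}$. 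A direct calculation then gives
\[
R_\sigma = -\tfrac{1}{2}\vrh^{\rm in}\phi_h^{\rm out}\chi''(\zeta_1)[v]^+\jump{b_h^k}^2 + \tfrac{1}{2}\vrh^{\rm out}\phi_h^{\rm in}\chi''(\zeta_2)[v]^-\jump{b_h^k}^2 .
\]
Summing over interior faces and rewriting the result as the cell-boundary double sum $\sum_{K\in\grid}\sum_{\sigma\subset\partial K}$, which counts each face twice with opposite orientations of $\vc{n}$, the relations $[-v]^\pm = -[v]^\mp$ and $\jump{b_h^k}^2$-symmetry convert the $[v]^+$ contribution from one viewpoint into the $[v]^-$ contribution from the other. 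Both terms therefore fold into the single expression
\[
-\tfrac{1}{2}\sum_{K\in\grid}\sum_{\sigma\subset\partial K}\int_\sigma \phi_h\chi''(\zeta)\jump{b_h^k}^2(\vrh^k)^{\rm out}[\Ov{\vuh^k}\cdot\vc{n}]^-\ds,
\]
which is precisely the last term of \eqref{r3}.

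The main obstacle is the third step: carefully tracking the in/out conventions together with the orientation of $\vc{n}$ and the two distinct Taylor intermediate points $\zeta_1,\zeta_2\in\co{b_h^{\rm in}}{b_h^{\rm out}}$, and exploiting the double counting inherent to the cell-boundary sum to absorb both $[v]^+$ and $[v]^-$ contributions into a single expression involving only $(\vrh^k)^{\rm out}[\Ov{\vuh^k}\cdot\vc{n}]^-$. Once this face-level algebraic identity is secured, assembling Steps~1--3 gives \eqref{r3} directly.
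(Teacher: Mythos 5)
The paper offers no proof of this lemma at all---it is quoted verbatim with a citation to \cite[Lemma A.1, Section A.2]{FeKaNo}---so there is no in-paper argument to compare against; your self-contained derivation is the standard one and its three-step structure is sound. The discrete Leibniz rule $D_t(\vrh^k b_h^k)=b_h^kD_t\vrh^k+\vrh^{k-1}D_tb_h^k$ together with the Taylor expansion of $\chi$ about $b_h^k$ does yield your pointwise identity with the $\frac{\TS}{2}\vrh^{k-1}\chi''(\xi)|D_tb_h^k|^2$ remainder; testing \eqref{scheme_den} with $\psi_h=[b_h^k\chi'(b_h^k)-\chi(b_h^k)]\phi_h$ is exactly the right device to absorb the $D_t\vrh^k$ term and produces the $h^\eps$-term of \eqref{r3}; and I have verified that expanding the two upwind fluxes face by face and invoking your two one-sided Taylor identities gives precisely $R_\sigma = -\tfrac12\vrh^{\rm in}\phi_h^{\rm out}\chi''(\zeta_1)[v]^+\jump{b_h^k}^2+\tfrac12\vrh^{\rm out}\phi_h^{\rm in}\chi''(\zeta_2)[v]^-\jump{b_h^k}^2$. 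Two slips should be repaired. First, the displayed Taylor formula in Step 1 must read $\frac{1}{2}\chi''(\xi)(b_h^k-b_h^{k-1})^2=\frac{\TS^2}{2}\chi''(\xi)|D_tb_h^k|^2$; the single factor $\TS$ appears only after dividing by $\TS$ to form $D_t\chi(b_h^k)$, and your subsequent pointwise identity is nonetheless correct. Second, the final sign bookkeeping is inconsistent as written: converting $\sum_{\sigma\in\facesint}\intSh{R_\sigma}$ into the cell-boundary double sum gives $+\tfrac12\sumEKh{\phi_h\chi''(\zeta)\jump{b_h^k}^2(\vrh^k)^{\rm out}[\Ov{\vuh^k}\cdot\vc{n}]^-}$, with a plus sign, and this is \emph{not} the last term of \eqref{r3}; rather, $R_\sigma$ enters the assembled identity inside $-\bigl(\sum_\sigma\intSh{Up[\vrh^kb_h^k,\vuh^k]\jump{\chi'(b_h^k)\phi_h}}-\sum_\sigma\intSh{Up[\vrh^k,\vuh^k]\jump{\psi_h}}\bigr)$, i.e.\ with an overall minus sign, which is what converts the $+\tfrac12$ cell-boundary sum into the $-\tfrac12$ term of \eqref{r3}. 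The two sign errors cancel, so the conclusion is correct, but the last step should be restated so that it parses.
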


 \subsection{Total energy balance}
Now, we are ready to  derive the discrete counterpart of the total energy balance.
\begin{Theorem}[Discrete energy  balance]\label{theorem_stability}
Let $(\vrh,\vuh)$ be a numerical solution obtained from scheme \eqref{scheme}. Then, for any $k=1,\ldots,N_T$,  there exists $\xi \in \co{\vrh^{k-1}}{\vrh^k}$ and  $\zeta \in \co{\vr_K^k}{\vr_L^k}$ such that,  for any $\sigma=K|L \in \facesint,$
\begin{equation} \label{energy_stability}
\begin{split}
& D_t \intO{ \left(\frac{1}{2} \vrh^k |\vuh^k|^2  + \Hc(\vrh^k) \right)  }
+ h^\eps  \sum_{ \sigma \in \facesint } \intSh{  \Ov{\vrh^k} \jump{ \vuh^k}^2  }
+  \mu   \norm{\Gradedge \vuh^k}_{L^2}^2  + (\mu +\lambda) \intO{  |\Divh \vuh^k|^2}
\\& =
- \frac{\TS}2 \intO{ \Hc''(\xi)|D_t \vrh^k|^2  }
- \frac12 \sum_{ \sigma \in \facesint } \intSh{ \Hc''(\zeta) \jump{  \vrh^k } ^2 \left(h^\eps  + | \Ov{\vuh^k} \cdot \vc{n} | \right)
}
\\&
\quad - \frac{\TS}{2} \intO{ \vrh^{k-1}|D_t \vuh^k|^2  }
- \frac12 \sum_{ \sigma \in \facesint } \intSh{ (\vrh^k)^{\rm up} |\Ov{\vuh^k} \cdot \vc{n} |\jump{ \vuh^k } ^2   }
.
\end{split}
\end{equation}
\end{Theorem}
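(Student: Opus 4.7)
The strategy mirrors the continuous total energy estimate: test the discrete momentum balance \eqref{scheme_mom} with the velocity itself, add the discrete internal energy balance \eqref{r1}, and rearrange so that all coercive dissipation terms are collected on the left-hand side while the renormalization remainders populate the right-hand side. The mean-value points $\xi$ and $\zeta$ in the statement will enter only through the application of \eqref{r1}; the companion renormalization \eqref{r3} applied to the momentum equation involves $\chi(s)=s^2/2$, for which $\chi''\equiv 1$, so no additional mean-value tracking is necessary.

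Concretely, I would first choose $\bfphi_h=\vuh^k$ in \eqref{scheme_mom}. The diffusive terms on the right then reproduce $\mu\|\Gradedge\vuh^k\|_{L^2}^2 + (\mu+\lambda)\|\Divh\vuh^k\|_{L^2}^2$ directly. For the pressure term I would apply the summation-by-parts identity \eqref{basic_eq2} with $r_h=p_h^k$ and $\bfv_h=\vuh^k$ to get $-\sum_{\sigma}\int_\sigma \Ov{p_h^k}\vc{n}\cdot\jump{\vuh^k}\ds = \sum_{\sigma}\int_\sigma \Ov{\vuh^k}\cdot\vc{n}\jump{p(\vrh^k)}\ds$, which is precisely the pressure--velocity coupling on the left-hand side of \eqref{r1}. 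Adding \eqref{r1} therefore substitutes this term by $D_t\int_\Omega\Hc(\vrh^k)$ and produces the two density-jump dissipations displayed in \eqref{energy_stability}.

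For the inertial block I would apply the renormalized transport equation \eqref{r3} componentwise with $b_h^k=u_{i,h}^k$, $\chi(s)=s^2/2$ and $\phi_h\equiv 1$, and sum over $i=1,\dots,d$. The left side of \eqref{r3} then coincides with $\int_\Omega D_t(\vrh^k\vuh^k)\cdot\vuh^k - \sum_\sigma\int_\sigma \mathbf{Up}[\vrh^k\vuh^k,\vuh^k]\cdot\jump{\vuh^k}\,\ds$, while the right side yields $D_t\int_\Omega \tfrac12 \vrh^k|\vuh^k|^2$, the dissipation $\tfrac{\TS}{2}\int_\Omega\vrh^{k-1}|D_t\vuh^k|^2$, an upwind double sum that becomes the stated $\tfrac12\sum_\sigma\int_\sigma (\vrh^k)^{\rm up}|\Ov{\vuh^k}\cdot\vc{n}|\jump{\vuh^k}^2\,\ds$ after the usual face-by-face bookkeeping (rewriting $(\vrh^k)^{\rm out}[\Ov{\vuh^k}\cdot\vc{n}]^-$ on each side of $\sigma=K|L$ and adding), and a spurious residual $-\sum_\sigma\int_\sigma h^\eps\jump{\vrh^k}\jump{\tfrac12|\vuh^k|^2}\,\ds$ coming from the factor $\chi(b)-\chi'(b)b=-b^2/2$.

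The delicate point, and the step I expect to be the main obstacle, is the cancellation that produces the clean $h^\eps\Ov{\vrh^k}|\jump{\vuh^k}|^2$ numerical dissipation. Testing the full flux $\mathbf{F}_h=\mathbf{Up}-h^\eps\jump{\vrh^k\vuh^k}$ against $\jump{\vuh^k}$ generates the extra term $h^\eps\sum_\sigma\int_\sigma\jump{\vrh^k\vuh^k}\cdot\jump{\vuh^k}\,\ds$; by the algebraic identity \eqref{basic_eq1} this equals $h^\eps\sum_\sigma\int_\sigma\bigl(\Ov{\vrh^k}|\jump{\vuh^k}|^2+\tfrac12\jump{\vrh^k}\jump{|\vuh^k|^2}\bigr)\,\ds$, and its second summand cancels exactly the residual left over from \eqref{r3}. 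Combining all the rearranged terms produces \eqref{energy_stability}.
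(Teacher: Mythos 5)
Your proposal is correct and follows essentially the same route as the paper: test \eqref{scheme_mom} with $\vuh^k$, rewrite the inertial block via the renormalized transport identity \eqref{r3} with $\chi(s)=s^2/2$ and $\phi_h=1$, convert the pressure term with \eqref{basic_eq2} so that adding the internal energy balance \eqref{r1} closes the system, and use \eqref{basic_eq1} to merge the $h^\eps$ flux contribution with the $\chi(b)-\chi'(b)b$ residual into the clean dissipation $h^\eps\Ov{\vrh^k}\jump{\vuh^k}^2$. All signs, the face-by-face conversion of the upwind term, and the origin of $\xi,\zeta$ are handled exactly as in the paper's argument.
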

\begin{proof}
First, taking $\bfphi_h = \vuh^k $ in \eqref{scheme_mom} we get
\[
\begin{split}
& \intO{D_t (\vrh^k \vuh^k) \cdot \vuh^k  } - \sum_{ \sigma \in \facesint } \intSh{ {\bf F}_h(\vrh^k \vuh^k,\vuh^k)
\cdot \jump{\vuh^k  }   } - \sum_{ \sigma \in \facesint } \intSh{ \Ov{p_h^k} \vc{n} \cdot \jump{ \vuh^k }  } \\
&=
-  \mu   \norm{\Gradedge \vuh^k}_{L^2}  - (\mu +\lambda) \intO{  |\Divh \vuh^k|^2} .
\end{split}
\]
Next, we use relation (\ref{r3}) for $b_h = \vuh^k$,  $\chi(|\vuh^k|) = \frac{1}{2} |\vuh^k|^2$, and $\phi_h=1$ to compute
\[
\begin{split}
\\&
\intO{ D_t (\vrh^k \vuh^k) \cdot \vuh^k  } - \sum_{ \sigma \in \facesint } \intSh{  {\bf Up}[\vrh^k \vuh^k, \vuh^k ] \cdot \jump{ \vuh^k }  }
\\&=
\intO{D_t \left( \frac{1}{2} \vrh^k |\vuh^k|^2 \right)} - \sum_{ \sigma \in \facesint } \intSh{ {\bf Up} \left[ \frac{1}{2} \vrh^k |\vuh^k|^2 , \vuh^k \right] \underbrace{\jump{ 1} }_{=0} }
 +\frac{\TS}{2} \intO{ \vrh^{k-1}|D_t \vuh^k|^2  }
\\&\quad
-\sum_{ \sigma \in \facesint } \intSh{ h^\eps  \jump{\vrh^k} \jump{\frac{1}{2}|\vuh^k|^2}    }
- \frac{1}{2} \sumEKh{
 (\vrh^k)^{\rm out} \left[\Ov{\vuh^k} \cdot \vc{n} \right]^-\jump{ \vuh^k }^2   }
\\&=
\intO{D_t \left( \frac{1}{2} \vrh^k |\vuh^k|^2 \right)}  +\frac{\TS}{2} \intO{ \vrh^{k-1}|D_t \vuh^k|^2  }
-\sum_{ \sigma \in \facesint } \intSh{ h^\eps   \jump{\vrh^k} \jump{ \frac{1}{2}|\vuh^k|^2    } }
\\& \quad
+\frac{1}{2}\sum_{ \sigma \in \facesint } \intSh{ (\vrh^k)^{\rm out} \left[\Ov{\vuh^k} \cdot \vc{n} \right]^-\jump{ \vuh^k }^2 }.
\end{split}
\]
Further, summing up the previous two observations we infer that
\begin{equation} \label{k1}
\begin{split}
& D_t \intO{ \frac{1}{2} \vrh |\vuh^k|^2  }
+  \mu   \norm{\Gradedge \vuh^k}_{L^2}  + (\mu +\lambda) \intO{  |\Divh \vuh^k|^2}
\\&
= \sum_{ \sigma \in \facesint } \intSh{ \Ov{p_h^k} \vc{n} \cdot \jump{ \vuh^k }  }
- \sum_{ \sigma \in \facesint } \intSh{ h^\eps  \jump{ \vrh^k \vuh^k }  \jump{\vuh^k  }  }
 +\sum_{ \sigma \in \facesint } \intSh{ h^\eps  \jump{\vrh^k} \jump{ \frac{1}{2}|\vuh^k|^2    } }
\\& \quad
- \frac{\TS}{2} \intO{ \vrh^{k-1}|D_t \vuh^k|^2  }
-\frac12 \sum_{ \sigma \in \facesint } \intSh{ (\vrh^k)^{\rm up} |\Ov{\vuh^k} \cdot \vc{n} |\jump{ \vuh^k } ^2   }
.\end{split}
\end{equation}
Finally, combining  \eqref{k1} with \eqref{r1} and using the equalities \eqref{basic_eq1}--\eqref{basic_eq2} we get
\begin{equation*}
\begin{split}
& D_t \intO{ \left(\frac{1}{2} \vrh^k |\vuh^k|^2  + \Hc(\vrh^k) \right)  }
+ h^\eps  \sum_{ \sigma \in \facesint } \intSh{  \Ov{\vrh^k} \jump{ \vuh^k}^2  }
+  \mu   \norm{\Gradedge \vuh^k}_{L^2}^2  + (\mu +\lambda) \intO{  |\Divh \vuh^k|^2}
\\& =
- \frac{\TS}2 \intO{ \Hc''(\xi)|D_t \vrh^k|^2  }
- \frac12 \sum_{ \sigma \in \facesint } \intSh{ \Hc''(\zeta) \jump{  \vrh^k } ^2 \left(h^\eps  + | \Ov{\vuh^k} \cdot \vc{n} | \right)}
\\&
\quad - \frac{\TS}{2} \intO{ \vrh^{k-1}|D_t \vuh^k|^2  }
- \frac12 \sum_{ \sigma \in \facesint } \intSh{ (\vrh^k)^{\rm up} |\Ov{\vuh^k} \cdot \vc{n} |\jump{ \vuh^k } ^2   } ,
\end{split}
\end{equation*}
which completes the proof.
\end{proof}

\subsection{Uniform bounds}
Having established all necessary ingredients, we are ready to discuss the available {\it a priori} bounds for solutions of scheme \eqref{scheme}.
From the total energy balance \eqref{energy_stability} and the Sobolev inequality \eqref{sobolev_inequ}, we directly get the estimates comprised in the following corollary.
\begin{Corollary}\label{ests}Let $(\vrh,\vuh)$ satisfy scheme \eqref{scheme} for $\gamma>1$.
Then the following estimates hold
\begin{subequations}\label{ests1}
\begin{align}
\label{est_ener}
\norm{\vrh \vuh^2}_{L^{\infty}L^1} & \lesssim 1,\\
\label{est_rho_lilg}
\norm{\vrh}_{L^{\infty}L^\gamma} & \lesssim 1,\\
\label{est_m}
 \norm{\vrh\vuh}_{L^\infty L^{\frac{2\gamma}{\gamma+1}}} & \lesssim 1,\\
 \label{est_gradu}
  \norm{\Gradedge \vuh}_{L^2 L^2}  &\lesssim 1,\\
\label{est_divu}
\norm{\Divh \vuh}_{L^{2}L^2}   &  \lesssim 1,  \\
\label{est_ul2l6}
\norm{\vuh}_{L^{2}L^6} & \lesssim 1,\\
\label{est_r_ujump}
h^\eps \int_0^T \sum_{ \sigma \in \facesint } \intSh{  \Ov{\vrh} \jump{ \vuh}^2}\dt  &  \lesssim 1, \\
\label{est_rjump}
 \int_0^T  \sum_{ \sigma \in \facesint } \intSh{ \Hc''(\zeta) \jump{  \vrh } ^2 (h^\eps + | \Ov{\vuh} \cdot \vc{n} |) } \dt &  \lesssim 1,
\end{align}
\end{subequations}
where $\zeta \in \co{\vr_K}{\vr_L}$ for any $\sigma=K|L \in \facesint$.
\end{Corollary}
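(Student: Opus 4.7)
All eight bounds are extracted from the discrete energy balance \eqref{energy_stability} by summation in time, aided by H\"older's inequality and the Sobolev inequality \eqref{sobolev_inequ}. The key preliminary observation is that every summand written with a minus sign on the right--hand side of \eqref{energy_stability} is itself non--positive: indeed $\Hc''(\vr)=a\gamma\vr^{\gamma-2}>0$ for any $\vr>0$, the positivity of the discrete density guarantees $(\vrh^k)^{\rm up},\vrh^{k-1}\ge 0$, and the remaining factors are squares or absolute values. After rearrangement, \eqref{energy_stability} states that $D_t\intO{\bigl(\tfrac12\vrh^k|\vuh^k|^2+\Hc(\vrh^k)\bigr)}+\mathcal{D}_h^k=0$, where $\mathcal{D}_h^k\ge 0$ gathers \emph{eight} non--negative dissipation terms. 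Multiplying by $\TS$ and telescoping over $k=1,\ldots,n\le N_T$ gives
\begin{equation*}
\intO{\Bigl(\tfrac12\vrh^n|\vuh^n|^2+\Hc(\vrh^n)\Bigr)}+\TS\sum_{k=1}^{n}\mathcal{D}_h^k\;\le\;\intO{\Bigl(\tfrac12\vrh^0|\vuh^0|^2+\Hc(\vrh^0)\Bigr)},
\end{equation*}
whose right--hand side is uniformly bounded in $h$ by \eqref{initial_data} and the $L^p$--stability of $\Pim$ (together with Jensen's inequality for the internal energy term).

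\textbf{Direct bounds.} Reading off the individual non--negative summands I obtain: \eqref{est_ener} from the kinetic energy; \eqref{est_rho_lilg} from $\Hc(\vr)=\tfrac{a}{\gamma-1}\vr^\gamma$; \eqref{est_gradu} from $\mu\norm{\Gradedge\vuh^k}_{L^2}^2$; \eqref{est_r_ujump} from the $h^\eps\Ov{\vrh}\jump{\vuh}^2$ term; and \eqref{est_rjump} from the $\Hc''(\zeta)\jump{\vrh}^2(h^\eps+|\Ov{\vuh}\cdot\vn|)$ term. The divergence bound \eqref{est_divu} is immediate when $\mu+\lambda>0$; in the degenerate case $\mu+\lambda=0$ it follows from the pointwise estimate $|\Divh\vuh|\lesssim|\Gradedge\vuh|$---consequence of \eqref{divdiv} and the triangle inequality---together with \eqref{est_gradu}.

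\textbf{Derived bounds.} For the momentum bound \eqref{est_m} I use the factorisation $\vrh\vuh=\sqrt{\vrh}\cdot\sqrt{\vrh}\,\vuh$ and H\"older's inequality, which gives
\begin{equation*}
\norm{\vrh\vuh}_{L^{2\gamma/(\gamma+1)}}\;\le\;\norm{\vrh}_{L^\gamma}^{1/2}\Bigl(\intO{\vrh|\vuh|^2}\Bigr)^{1/2},
\end{equation*}
and then I invoke \eqref{est_rho_lilg} and \eqref{est_ener}. Finally, for \eqref{est_ul2l6} I apply the discrete Sobolev inequality \eqref{sobolev_inequ} with $r=\vrh$, $v_h=\vuh$ pointwise in time; its hypotheses are satisfied uniformly by mass conservation (supplying $c_M$) and \eqref{est_rho_lilg} (supplying $c_E$). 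Time integration of the resulting inequality combined with \eqref{est_gradu} and \eqref{est_ener} yields \eqref{est_ul2l6}. The whole argument is essentially bookkeeping; the only delicate point is the sign analysis of the right--hand side of \eqref{energy_stability}, on which all the dissipative bounds hinge.
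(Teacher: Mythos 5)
Your argument is correct and is exactly the route the paper intends: the paper's proof consists of the single remark that the estimates follow ``from the total energy balance \eqref{energy_stability} and the Sobolev inequality \eqref{sobolev_inequ},'' and your telescoping of \eqref{energy_stability}, sign analysis of the dissipation terms, H\"older factorisation $\vrh\vuh=\sqrt{\vrh}\cdot\sqrt{\vrh}\,\vuh$ for \eqref{est_m}, and time integration of \eqref{sobolev_inequ} for \eqref{est_ul2l6} are precisely the omitted details. Your treatment of the degenerate case $\mu+\lambda=0$ via $\norm{\Divh\vuh}_{L^2}\lesssim\norm{\Gradedge\vuh}_{L^2}$ (an $L^2$ rather than pointwise comparison, since the two operators live on the primal and dual grids respectively) is in fact slightly more careful than the paper itself.
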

 To show the consistency of the numerical scheme we shall need further bounds on the numerical solution, which can be derived provided the adiabatic coefficient in \eqref{pressure} lies in the physically realistic range $\gamma\in(1,2).$
\begin{Lemma}\label{ests2}
 Let $(\vrh, \vuh)$ satisfy scheme \eqref{scheme}, $h\in(0,1)$ and  $\gamma \in(1,2)$. Then there hold
 \begin{subequations}\label{est_rho_gamma12}
\begin{align}
\label{est_rho_l2l2}
\norm{\vrh}_{L^2 L^2 }  &\aleq h ^{ - \frac{\eps+2}{2\gamma}}, \\
\label{est_m_l2l2}
 \norm{\vrh \vuh} _{L^2L^2} &\aleq h^{- \frac{\eps+2}{2\gamma}}.
\end{align}
\end{subequations}
\end{Lemma}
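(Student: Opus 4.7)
The plan is to derive both bounds by upgrading the uniform $L^\infty(0,T;L^\gamma(\Omega))$ estimate on $\vrh$ to better space-time integrability via the jump bound \eqref{est_rjump}, and then to conclude by interpolation and, where needed, the inverse inequality \eqref{inverse_inequality}.

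First I would establish the pointwise inequality
\begin{equation*}
(\rho_1^{\gamma/2}-\rho_2^{\gamma/2})^2 \leq \frac{\gamma}{4a}\Hc''(\zeta)(\rho_1-\rho_2)^2 \qquad \mbox{for } \rho_1,\rho_2>0,\ \zeta \in \co{\rho_1}{\rho_2},
\end{equation*}
where $\zeta$ is the Taylor residual point appearing in \eqref{r1}. This follows by combining the Cauchy--Schwarz estimate $(\rho_1^{\gamma/2}-\rho_2^{\gamma/2})^2 \leq \frac{\gamma^2}{4(\gamma-1)}(\rho_1-\rho_2)(\rho_1^{\gamma-1}-\rho_2^{\gamma-1})$ with the observation that, for $\gamma\in(1,2)$, the weight $2(\rho_1-t)/(\rho_1-\rho_2)$ entering $\zeta^{\gamma-2} = (2/(\rho_1-\rho_2)^2)\int_{\rho_2}^{\rho_1} t^{\gamma-2}(\rho_1-t)\,\dt$ concentrates mass near the smaller endpoint where $t^{\gamma-2}$ is largest; hence the unweighted mean $\int_{\rho_2}^{\rho_1}t^{\gamma-2}\dt/(\rho_1-\rho_2)$ does not exceed $\zeta^{\gamma-2}$. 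Summed over $\sigma \in \facesint$ and combined with \eqref{est_rjump} (retaining only the $h^\eps$-contribution), this yields $\|\Gradedge \vrh^{\gamma/2}\|_{L^2L^2}^2 \lesssim h^{-(1+\eps)}$. Applying the discrete Sobolev inequality \eqref{sobolev_inequ} to $v_h = \vrh^{\gamma/2}$ with constant weight $r = |\Omega|^{-1}$, together with $\|\vrh^{\gamma/2}\|_{L^\infty L^2}^2 = \|\vrh\|_{L^\infty L^\gamma}^\gamma \lesssim 1$ from \eqref{est_rho_lilg}, and parabolic interpolation in $d=3$, then produces
\begin{equation*}
\|\vrh\|_{L^{5\gamma/3}(Q_T)}^{5\gamma/3} = \|\vrh^{\gamma/2}\|_{L^{10/3}(Q_T)}^{10/3} \leq \|\vrh^{\gamma/2}\|_{L^\infty L^2}^{4/3}\|\vrh^{\gamma/2}\|_{L^2 L^6}^2 \lesssim h^{-(1+\eps)}.
\end{equation*}

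For $\gamma \geq 6/5$ (the harder case in $d=3$), the embedding $L^{5\gamma/3}(Q_T) \hookrightarrow L^2(Q_T)$ immediately gives $\|\vrh\|_{L^2L^2} \lesssim h^{-3(1+\eps)/(5\gamma)} \leq h^{-(\eps+2)/(2\gamma)}$, the last inequality reducing to $\eps \leq 4$. For $\gamma \in (1, 6/5)$, I would apply \eqref{inverse_inequality} in both space and time (using $\TS \approx h$) to bridge from $L^{5\gamma/3}(Q_T)$ to $L^2(Q_T)$; a direct computation shows that the resulting exponent of $h$ equals $(20\gamma - 30 - 6\eps)/(10\gamma)$, which is at least $-(\eps+2)/(2\gamma)$ precisely when $\gamma \geq 1 + \eps/20$, a condition automatically ensured by the standing hypothesis $\eps < 2(\gamma-1)$. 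The bound on $\|\vrh\vuh\|_{L^2L^2}$ follows from H\"older in space, $\|\vrh\vuh\|_{L^2(\Omega)} \leq \|\vrh\|_{L^3(\Omega)}\|\vuh\|_{L^6(\Omega)}$, combined with \eqref{est_ul2l6} and an analogous interpolation/inverse-inequality argument applied to $\|\vrh\|_{L^\infty L^3}$ (or $\|\vrh\|_{L^2 L^3}$ in the small-$\gamma$ regime).

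The main obstacle is the pointwise comparison between $|\jump{\vrh^{\gamma/2}}|^2$ and $\Hc''(\zeta)|\jump{\vrh}|^2$. A direct mean-value theorem for $x \mapsto x^{\gamma/2}$ yields $(\rho_1^{\gamma/2}-\rho_2^{\gamma/2})^2 = (\gamma/2)^2\eta^{\gamma-2}(\rho_1-\rho_2)^2$ with $\eta \in \co{\rho_1}{\rho_2}$, but the ratio $\eta^{\gamma-2}/\zeta^{\gamma-2}$ can be as bad as $(\max/\min)^{2-\gamma}$ and is not uniformly bounded; the detour through $(\rho_1-\rho_2)(\rho_1^{\gamma-1}-\rho_2^{\gamma-1})$ is what supplies the correct pointwise comparison, by exploiting the favourable weighting of the Taylor residual towards the endpoint where $t^{\gamma-2}$ is largest.
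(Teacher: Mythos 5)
Your route to \eqref{est_rho_l2l2} --- parabolic interpolation of $\vrh^{\gamma/2}$ in $L^\infty L^2\cap L^2L^6$ to reach $\vrh\in L^{5\gamma/3}$ in space--time, followed by either the trivial embedding or a space--time inverse inequality --- is a workable alternative to the paper's argument, which instead bounds $\|\vrh\|_{L^1L^\infty}$ by applying the spatial inverse inequality $L^6\to L^\infty$ to $\vrh^{\gamma/2}$ and then interpolates $L^2L^2$ between $L^\infty L^1$ and $L^1L^\infty$. However, your treatment of the ``main obstacle'' is both unnecessary and incorrect as stated. The inequality $(\rho_1^{\gamma/2}-\rho_2^{\gamma/2})^2\le\tfrac{\gamma^2}{4}\zeta^{\gamma-2}(\rho_1-\rho_2)^2$ genuinely depends on the orientation of the Taylor remainder: when the expansion point is the \emph{larger} of the two densities, the weight proportional to $|\rho_1-t|$ concentrates where $t^{\gamma-2}$ is \emph{smallest}, the weighted mean drops below the unweighted one, and the inequality fails (take $\rho_2=1$, $\rho_1\to0$: the left side tends to $1$ while the right side tends to $\gamma/2<1$). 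Since Lemma \ref{lem_r1} only asserts the existence of \emph{some} $\zeta\in\co{\vr_K}{\vr_L}$ and both orientations occur at the faces, this step is broken. It is also not needed: for $\gamma\in(1,2)$ the elementary bound $a\gamma(\rho_1^{\gamma/2}-\rho_2^{\gamma/2})^2\le\Hc''(z)(\rho_1-\rho_2)^2$ holds for \emph{every} $z\in\co{\rho_1}{\rho_2}$ --- it suffices to check the worst case $z=\max\{\rho_1,\rho_2\}$, where it reduces to $\rho_{\max}^{\gamma/2-1}\le\rho_{\min}^{\gamma/2-1}$ --- and this is exactly what the paper uses to deduce $\|\Gradedge\vrh^{\gamma/2}\|_{L^2L^2}^2\aleq h^{-(1+\eps)}$ from \eqref{est_rjump}.

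The more serious gap is the momentum bound \eqref{est_m_l2l2}. The splitting $\|\vrh\vuh\|_{L^2(\Omega)}\le\|\vrh\|_{L^3}\|\vuh\|_{L^6}$ combined with $\|\vuh\|_{L^2L^6}\aleq1$ forces you to control $\|\vrh\|_{L^\infty_tL^3_x}$, and none of the available estimates does this at the required rate: the only uniform-in-time information is $\|\vrh\|_{L^\infty L^\gamma}\aleq1$ (the jump estimate \eqref{est_rjump} is integrated in time and cannot improve the $L^\infty_t$ integrability), and the spatial inverse inequality only gives $\|\vrh\|_{L^\infty L^3}\aleq h^{3(1/3-1/\gamma)}=h^{1-3/\gamma}$, which is far weaker than $h^{-(\eps+2)/(2\gamma)}$ for $\gamma$ close to $1$. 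The substitute $\|\vrh\|_{L^2L^3}$ does not help because $\vuh$ is not controlled in $L^\infty_tL^6_x$. The paper avoids this by writing $\vrh\vuh=\sqrt{\vrh}\cdot(\sqrt{\vrh}\vuh)$ and invoking the kinetic energy bound: $\|\vrh\vuh\|_{L^2L^2}\aleq\|\sqrt{\vrh}\|_{L^2L^\infty}\,\|\sqrt{\vrh}\vuh\|_{L^\infty L^2}=\|\vrh\|_{L^1L^\infty}^{1/2}\,\|\vrh|\vuh|^2\|_{L^\infty L^1}^{1/2}$. This is why the $L^1_tL^\infty_x$ bound on the density, rather than your $L^{5\gamma/3}$ space--time bound, is the natural intermediate quantity; you would need to establish $\|\vrh\|_{L^1L^\infty}\aleq h^{-(\eps+2)/\gamma}$ anyway in order to close this part.
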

\begin{proof}
We start the proof by recalling the Sobolev inequality for the broken norm
\[ \norm{f_h}_{L^6}^2 \aleq \norm{f_h}_{L^2}^2 + \sum_{ \sigma \in \facesint } \intSh{ \frac{\jump{f_h}^2}{d_\sigma} }
=
\norm{f_h}_{L^2}^2 +  \norm{\Gradedge f_h}_{L^2}^2, \; \forall f_h\in Q_h,
\]
and the algebraic inequality
\[ a \gamma (\vr_L^{\gamma/2} -\vr_K^{\gamma/2})^2  \leq \frac{\pd^2 \Hc(z) }{\pd \vr^2} (\vr_L -\vr_K)^2,\; \forall\; z \in \co{\vr_L}{\vr_K}, \vr_L, \vr_K>0 \text{ if } \gamma \in(1,2).
\]
Then we indicate from the estimate of the density jumps \eqref{est_rjump} that
\[  \norm{\Gradedge \vrh^{\gamma/2}}_{L^2L^2}^2 =
\int_0^T \sum_{ \sigma \in \facesint } \intSh{ \frac{1}{d_\sigma} \jump{\vrh ^{\gamma/2}}^2} \aleq h^{-(\eps+1)}.
 \]
Applying the above inequalities, the inverse estimate and the estimate \eqref{est_rho_lilg} we derive
\begin{equation*}
\begin{aligned}
\norm{\vrh}_{L^1 L^\infty } &
= \int_0^T \norm{\vrh ^{\gamma/2}}_{ L^\infty }^{2/\gamma}  \dt
\leq \int_0^T \left( h^{-1/2} \norm{\vrh ^{\gamma/2}}_{ L^6 } \right) ^{2/\gamma}  \dt \\
&\leq h^{-1/\gamma} \int_0^T \left( \norm{\vrh ^{\gamma/2}}_{ L^2 }^2
+ \norm{\Gradedge \vrh^{\gamma/2}}_{L^2} ^2  \right)^{1/\gamma}  \dt
\leq h^{-1/\gamma} \left( \norm{\vrh}_{ L^1 L^\gamma } + \norm{\Gradedge \vrh^{\gamma/2}}_{L^{\gamma/2}L^2}  ^{2/\gamma}  \right)\\
&\leq h^{-1/\gamma} \left( \norm{\vrh}_{ L^1 L^\gamma } + \norm{\Gradedge \vrh^{\gamma/2}}_{L^2L^2}  ^{2/\gamma}  \right)
\leq  h ^{ - \frac{\eps+2}{\gamma}}.
\end{aligned}
\end{equation*}
Further application of the above inequality together with the Gagliardo-Nirenberg interpolation inequality, H\"older's inequality, and the density estimate \eqref{est_rho_lilg}  immediately yield \eqref{est_rho_l2l2}, i.e.,
\begin{equation*}
 \norm{\vrh}_{L^2 L^2 } = \left( \int_0^T \norm{\vrh}_{L^2}^2 \dt  \right)^{1/2}
 \leq \left( \int_0^T \norm{\vrh}_{L^1}\norm{\vrh}_{L^\infty } \dt  \right) ^{1/2}
   \leq \norm{\vrh}_{L^\infty L^1}^{1/2} \norm{\vrh}_{L^1 L^\infty }^{1/2}
   \aleq  h ^{ - \frac{\eps+2}{2\gamma}}.
\end{equation*}
Finally, the estimate \eqref{est_m_l2l2} can be shown in the following way
\begin{equation*}
 \norm{\vrh \vuh} _{L^2L^2} \aleq
\norm{ \sqrt{\vrh} }_{L^2 L^\infty} \norm{ \sqrt{\vrh} \vuh} _{L^\infty L^2}
=\norm{\vrh}_{L^1 L^\infty }^{1/2}  \norm{ \vrh \vuh^2} _{L^\infty L^1} ^{1/2}
\aleq h^{- \frac{\eps+2}{2\gamma}}.
\end{equation*}
\end{proof}

\section{Consistency}
\label{sec_Consistency}

 Next step towards the convergence of the approximate solutions is the consistency of the numerical scheme. In particular, we require the numerical solution to satisfy the weak formulation of the  continuous problem up to a residual term  vanishing for $h \to 0.$
\begin{Theorem} \label{Tm2}
Let $(\vrh, \bfu_h)$ be a solution of the approximate problem \eqref{scheme} on the time interval $[0,T]$ with $\TS\approx h$, $1<\gamma <2$ and $0<\eps <\min\left\{1, 2(\gamma-1)\right\}$.
Then
\begin{equation} \label{cP1}
- \intO{ \vrh^0 \phi(0,\cdot) }  =
\int_0^\tau \intO{ \left[ \vrh \partial_t \phi + \vrh \vuh \cdot \Grad \phi \right]} \dt  + \int_0^T
e_{1,h} (t, \phi) \dt,
\end{equation}
for any $\phi \in C_c^3([0,T) \times \Ov{\Omega})$;
\begin{equation} \label{cP2}
\begin{split}
- &\intO{ \vrh^0 \vuh^0 \bfphi(0,\cdot) }  =
\int_0^T \intO{ \left[ \vrh \vuh \cdot \partial_t \bfphi + \vrh \vuh \otimes \vuh  : \Grad \bfphi  + p_h \Div \bfphi \right]} \dt,
\\&
 -  \mu \int_0^T \intO{  \Gradedge \vuh : \Grad \bfphi}  \dt
- (\mu + \lambda)\int_0^T \intO{ \Divh \vuh\, \Div \bfphi}\dt
 + \int_0^T e_{2,h} (t, \bfphi) \dt
\end{split}
\end{equation}
for any $\bfphi \in C^3_c([0,T] \times {\Omega}; R^d)$;
\[
\| e_{j,h} (\cdot, \phi ) \|_{L^1(0,T)} \lesssim h^\beta  \left( \| \phi \|_{C^2} + h \| \phi \|_{C^3} \right), \, j=1,2,  \, \mbox{ for some }\ \beta > 0.
\]
\end{Theorem}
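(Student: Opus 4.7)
The plan is to substitute the projection of a smooth test function into the discrete scheme \eqref{scheme} and reorganise the resulting identity so that its principal terms reproduce the continuous weak formulation, with everything else absorbed into the residuals $e_{1,h}$ and $e_{2,h}$. For \eqref{cP1} I would take $\phi_h = \Pim\phi(t^k,\cdot)$ in \eqref{scheme_den} at each time level and sum over $k=1,\ldots,N_T$ after multiplication by $\Delta t$; for \eqref{cP2} the same procedure is applied to \eqref{scheme_mom} with $\bfphi_h = \Pim\bfphi(t^k,\cdot)$.

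For the continuity equation, Abel summation on $\int_0^T\!\int D_t\vr_h\,\Pim\phi\,\dx\dt$ produces the initial contribution $-\int\vr_h^0\,\Pim\phi(0)$ (the terminal boundary term vanishes by the compact temporal support of $\phi$), together with a sum that matches $\int_0^T\!\int\vr_h\,\partial_t\phi$ once the discrete forward difference of $\Pim\phi$ is replaced by $\partial_t\phi$; the discrepancy is $O(h)$ multiplied by $\|\vr_h\|_{L^\infty L^\gamma}\lesssim 1$ from \eqref{est_rho_lilg}. The convective flux $\sum_\sigma\intSh{F_h(\vr_h,\vu_h)\jump{\Pim\phi}}$ is then handled directly by Lemma \ref{lem_convective_trans} with $r_h=\vr_h$ and $\bfv_h=\vu_h$: the identity extracts exactly $\int\vr_h\vu_h\cdot\Grad\phi$ and leaves the upwind, $h^\eps$-diffusion and projection-defect contributions for $e_{1,h}$.

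The momentum equation follows the same pattern, applied componentwise to the convective flux through Lemma \ref{lem_convective_trans} with $r_h=\vr_h u_{i,h}$. Three further structures appear. The pressure term $\sum_\sigma\intSh{\Ov{p_h}\vc{n}\cdot\jump{\Pim\bfphi}}$ is recast via the product rule \eqref{product_rule} and the vanishing identity \eqref{basic_eq2} as $-\int p_h\,\Divh\Pim\bfphi$, which matches $-\int p_h\Div\bfphi$ up to an error controlled by \eqref{n4c2_mesh} and the pressure bound $\|p_h\|_{L^\infty L^1}\lesssim 1$ coming from \eqref{est_rho_lilg}. The diffusive term $\mu\sum_\sigma\intSh{d_\sigma^{-1}\jump{\vu_h}\cdot\jump{\Pim\bfphi}}$ is rewritten componentwise as $\mu\int\Gradedge\vu_h:\Gradedge\Pim\bfphi$ by definition of $\Gradedge$, and then compared with $\mu\int\Gradedge\vu_h:\Grad\bfphi$ through \eqref{n4c2_edge} together with \eqref{est_gradu}; the $(\mu+\lambda)$-term is treated analogously using \eqref{n4c2_mesh} and \eqref{est_divu}.

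The hardest step is showing that every residual produced above is $O(h^\beta)$ in $L^1(0,T)$ for some $\beta>0$, and it is here that the sharp range $0<\eps<\min\{1,2(\gamma-1)\}$ is used. The pure projection errors of the form $\int\vr_h\vu_h\cdot(\Grad\phi-\Gradmesh\Pid\Pim\phi)$ carry an $h$-factor from \eqref{n4c2_mesh}, which must be paired with the growing bound $\|\vr_h\vu_h\|_{L^2L^2}\aleq h^{-(\eps+2)/(2\gamma)}$ of \eqref{est_m_l2l2}; the resulting exponent is strictly positive precisely when $\eps<2(\gamma-1)$. The upwind and artificial-diffusion contributions $\sum_\sigma\intSh{\bigl(\tfrac12|\Ov{\vu_h}\cdot\vc{n}|+h^\eps+\tfrac14\jump{\vu_h}\cdot\vc{n}\bigr)\jump{\vr_h}\jump{\Pim\phi}}$ are controlled by Cauchy--Schwarz together with the density-jump estimates \eqref{est_rjump} and \eqref{est_r_ujump}, the velocity bound \eqref{est_ul2l6}, and $|\jump{\Pim\phi}|\lesssim h\|\phi\|_{C^1}$ from \eqref{n4c}; the restriction $\eps<1$ is what ultimately guarantees a positive net power of $h$. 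Finally, the residuals of temporal origin coming from Abel summation vanish at rate $\Delta t\approx h$ and cause no further difficulty.
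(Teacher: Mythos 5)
Your overall architecture is the same as the paper's: test \eqref{scheme_den}, \eqref{scheme_mom} with $\Pim\phi$, $\Pim\bfphi$, perform an Abel summation in time, invoke Lemma \ref{lem_convective_trans} for the convective fluxes, recast the pressure term via the product rule and \eqref{basic_eq2}, and compare the viscous terms with their continuous counterparts through \eqref{n4c2_edge}, \eqref{n4c2_mesh}. The gap lies in the one estimate that carries the whole theorem: the upwind error
$\tfrac12\int_0^T\sum_{\sigma\in\facesint}\intSh{|\Ov{\vuh}\cdot\vc{n}|\,\jump{r_h}\,\jump{\Pim\phi}}\dt$
produced by Lemma \ref{lem_convective_trans}, in the momentum case $r_h=\vrh u_{i,h}$. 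You propose to close it by extracting $|\jump{\Pim\phi}|\lesssim h$ and applying Cauchy--Schwarz against the jump estimates \eqref{est_rjump}, \eqref{est_r_ujump}. This does not close: \eqref{est_rjump} controls $\jump{\vrh}^2$ only with the weight $\Hc''(\zeta)\sim\zeta^{\gamma-2}$, which degenerates for large density when $\gamma<2$, so after splitting $\jump{\vrh u_{i,h}}=\Ov{\vrh}\jump{u_{i,h}}+\jump{\vrh}\Ov{u_{i,h}}$ the dual Cauchy--Schwarz factor for the second piece contains face integrals of $|\Ov{\vuh}|\,\zeta^{2-\gamma}\,|\Ov{\vuh}|^2\aleq(1+\vrh)|\vuh|^3$, which is not controlled by any of the bounds in \eqref{ests1} (one only has $\vuh\in L^2L^6$ and $\vrh|\vuh|^2\in L^\infty L^1$). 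The paper's actual treatment of this term is a discrete summation by parts: the double-jump structure is rewritten as $\sum_i\sum_{\sigma\in\facesi}\int_{D_\sigma}h_i|\Ov{u_{i,h}}|\,\pdedgei r_h\,\pdedgei\Pim\phi\dx$ and the difference quotient is moved off $r_h$ onto $|\Ov{u_{i,h}}|\,\pdedgei\Pim\phi$ via \eqref{int_by_part_gradm}, so that the whole term is bounded by $h\norm{r_h}_{L^2L^2}$ times quantities controlled by \eqref{est_gradu}, \eqref{est_ul2l6}, \eqref{n4c3}; only then do the blow-up bounds \eqref{est_rho_l2l2}, \eqref{est_m_l2l2} of Lemma \ref{ests2} enter, giving $\beta=1-\frac{\eps+2}{2\gamma}>0$ exactly when $\eps<2(\gamma-1)$. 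This is also the reason the test functions are taken in $C^3$: the second discrete derivative $\Delta_h\Pim\phi$ must be estimated via \eqref{n4c3}.

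A related misattribution: you assign the bound \eqref{est_m_l2l2} and the condition $\eps<2(\gamma-1)$ to the projection defect $\int_0^T\intO{r_h\vuh\cdot(\Grad\phi-\Gradmesh\Pid(\Pim\phi))}\dt$. That term is $O(h)$ unconditionally, directly from \eqref{n4c2_mesh} and the $L^\infty L^1$ bounds \eqref{est_ener}, \eqref{est_m} on $\vrh\vuh$ and $\vrh|\vuh|^2$; no $L^2L^2$ bound is needed there (and for $r_h=\vrh u_{i,h}$ the quantity $\norm{\vrh\vuh}_{L^2L^2}$ you cite is not even the relevant one). In the paper, $\eps<2(\gamma-1)$ originates in the upwind term just discussed and in the $\jump{\vuh}\cdot\vc{n}\,\jump{\vrh}\,\jump{\Pim\phi}$ term for the continuity equation, while $\eps<1$ comes from the same term in the momentum equation, where \eqref{est_r_ujump} yields the rates $h^{(1-\eps)/2}$ and $h^{1-\eps}$. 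Until the summation-by-parts argument (or a substitute for it) is supplied, the consistency of the convective momentum flux --- hence \eqref{cons_mom_convective} and the theorem --- is not established.
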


\begin{proof}
Let $\phi \in C_c^{3}([0,T)\times \overline{\Omega})$ and $\bfphi \in C_c^{3}([0,T)\times \Omega;R^d).$  We test the   equations \eqref{scheme_den} and \eqref{scheme_mom} with $\Pim \phi $ and $\Pim \bfphi $, respectively, and deal with each term separately.
\paragraph{Step 1 -- time derivative terms:}
\begin{equation*}
\begin{aligned}
&\int_0^T \intO{ D_t r_h \Pim \phi } \dt =
\int_0^T \intO{ \frac{r_h(t)- r_h(t-\Delta t) } {\Delta t} \phi(t) } \dt
\\ &
= \frac{1}{\TS}\int_0^T \intO{ r_h(t) \phi(t) } \dt -
\frac{1}{\Delta t}\int_{-\Delta t}^{T-\Delta t} \intO{ r_h(t) \phi(t+ \Delta t) } \dt
\\ &=  - \int_0^T \intO{ r_h(t) D_t \phi(t) } \dt
+ \frac{1}{\Delta t}\int_{T-\Delta t}^{T} \intO{ r_h(t) \phi(t+ \Delta t) } \dt
 - \frac{1}{\Delta t}\int_{-\Delta t}^{0} \intO{ r_h(t) \phi(t+ \Delta t) } \dt
\\ & = - \int_0^T \intO{ r_h(t) D_t \phi (t) } \dt - \intO{ r_h^0 \phi (0) }
\\ &  = - \int_0^T \intO{ r_h(t) \pd_t \phi (t) } \dt - \intO{ r_h^0 \phi (0) } + \TS  \norm{\phi}_{C^2} \norm{r_h}_{L^1L^1}
,
\end{aligned}
\end{equation*}
where  $r_h$ stands for $\vrh$ or $\vrh u_{i,h},$ $ i=1,\ldots,d$.
Recalling the estimates \eqref{est_rho_lilg} and \eqref{est_m} we know that
\[ \norm{\vrh}_{L^1L^1} \aleq \norm{\vrh}_{L^\infty L^\gamma} \aleq 1 \quad \mbox{and}\quad
\norm{\vrh \vuh }_{L^1L^1} \aleq \norm{\vrh \vuh}_{L^\infty L^{\frac{2\gamma}{\gamma+1}}} \aleq 1.
\]
Thus, we have
\begin{subequations}\label{consistency_timederivative}
\begin{align}
\int_0^T \intO{ D_t \vrh \Pim\phi} \dt
& + \int_0^T \intO{ \vrh(t) \pd_t \phi (t) } \dt
+ \intO{ \vrh^0 \phi (0) }  \aleq \TS, \\
\int_0^T \intO{ D_t (\vrh \vuh) \Pim \bfphi } \dt
& + \int_0^T \intO{ \vrh(t)\vuh(t)  \pd_t \bfphi(t) } \dt
+ \intO{ \vrh^0 \vuh^0 \bfphi(0) }  \aleq \TS,
\end{align}
for the continuity and the momentum equations, respectively.
\end{subequations}

\paragraph{Step 2 -- convective terms:}\hspace{1ex} \\
To deal with the convective terms, it is convenient to
recall Lemma \ref{lem_convective_trans}:
\begin{align*}
\int_0^T \intO{  r_h \vuh \cdot \Grad \phi  } \dt  - \int_0^T \sum_{\sigma \in \facesint} \intSh{ F[ r_h,\vuh ] \jump{  \Pim \phi}}  \dt =\sum_{j=1}^4 E_j(r_h),
\end{align*}
where
\begin{equation*}
\begin{aligned}
& E_1(r_h)= \frac12 \int_0^T \sum_{\sigma \in \facesint} \intSh{
  |\Ov{\vuh} \cdot \vc{n}| \jump{r_h }  \jump{ \Pim \phi}  }  \dt   ,
\\& E_2(r_h)= \frac14  \int_0^T \sum_{\sigma \in \facesint} \intSh{
 \jump{\vuh} \cdot \vc{n}   \jump{r_h }  \jump{  \Pim \phi}  }  \dt ,
\\& E_3(r_h)= \int_0^T\sum_{\sigma \in \facesint} \intSh{ h^\eps   \jump{r_h }   \jump{  \Pim \phi} } \dt ,
\\& E_4(r_h)= \int_0^T \intO{  r_h \vuh \cdot \Big(\Grad \phi - \Gradmesh \Pid  \big( \Pim \phi\big) \Big) } \dt,
\end{aligned}
\end{equation*}
are the error terms to be estimated. Again, $r_h$ is either $\vrh$ or $\vrh u_{i,h},$ $i=1,\ldots,d.$

\begin{itemize}[wide=0pt]
\item  Firstly, for the error term $E_1$ we can write

\begin{equation*}
\begin{aligned}
E_1(r_h)= &\frac12 \int_0^T \sum_{\sigma \in \facesint} \intSh{ |\Ov{\vuh}\cdot \vc{n}| \jump{r_h }  \jump{ \Pim \phi} }  \dt
= \frac12 \int_0^T \sum_{\sigma \in \facesint} \intSh{ |\Ov{u_{i,h}}| \jump{r_h }  \jump{ \Pim \phi} }  \dt
\\
&=  \frac12 \int_0^T \sumi \sum_{\sigma \in \facesinti} \int_{D_\sigma} h_i |\Ov{u_{i,h}}|    \pdedgei r_h  \pdedgei \Pim \phi  \dx \dt
\\&=
- \frac{1}{2} \int_0^T \sumi \sum_{K \in \grid} \int_K h_i r_h \pdmeshi \left(  |\Ov{u_{i,h}}|     \pdedgei  \Pim \phi
\right)
\dx  \dt
 \\& =
-\frac{1}{2} \int_0^T  \sumi \sum_{K \in \grid} \int_K  r_K h_i \bigg(  \Pim |\Ov{u_{i,h}}| \; \pdmeshi (\pdedgei \Pim\phi) + \left(\pdmeshi |\Ov{u_{i,h}}| \right)    \Pim (\pdedgei \Pim\phi) \bigg)\dx  \dt,
\end{aligned}
\end{equation*}
where we have used the integration by parts formula \eqref{int_by_part_gradm}, the product rule
\[r_2q_2 -r_1q_1 = \frac{r_1+ r_2}{2} (q_2 -q_1)  + \frac{q_1 +q_2}{2} (r_2- r_1).\]
 Further, employing the inequality $\left(\frac{a+b}{2} \right)^2 \leq \frac{a^2 +b^2}{2}$ twice, we claim  $\norm{\Pim|\Ov{u_{i,h}}|}_{L^2} \aleq \norm{u_{i,h}}_{L^2}$. Similarly, we claim
$ \norm {\pdmeshi \Ov{u_{i,h}}} _{L^2}  \aleq \norm{\pdedgei u_{i,h}}_{L^2} $ as
 $ \left(\pdmeshi \Ov{u_{i,h}}\right)_K =\Pim \left( \pdedgei u_{i,h} \right) _K $.
Then applying H\"older's inequality, interpolation error estimates \eqref{n4c2_edge}, \eqref{n4c3}, the velocity estimates \eqref{est_gradu}, \eqref{est_ul2l6}, the fact $|\pd_x u_i| \geq \pd_x|u_i|$, and noticing $\Delta_h^{(i)} r := \pdmeshi \pdedgei r$,  we derive
\begin{equation*}
\begin{aligned}
E_1(r_h)& =
-\frac{1}{2}
 \int_0^T  \sumi \sum_{K \in \grid} \int_K  r_K h_i \bigg(  \Pim |\Ov{u_{i,h}}| \; \pdmeshi (\pdedgei \Pim\phi) + \left(\pdmeshi |\Ov{u_{i,h}}| \right)    \Pim (\pdedgei \Pim\phi) \bigg)\dx  \dt
\\&
\aleq \sumi h_i \left(\int_0^T \sum_K \int _K r_K^2 \right)^{1/2}
\Bigg[ \left(\int_0^T \sum_{K\in\grid} \int _K \left( \Pim |\Ov{u_{i,h}}| \right)_K ^2 \right)^{1/2}
 \norm{ \Delta_h^{(i)} \Pim \phi} _{L^\infty L^\infty}
 \\& \qquad \qquad \qquad \qquad \qquad\qquad  \quad +
 \left(\int_0^T \sum_{K\in\grid} \int _K \left(\pdmeshi \Ov{u_{i,h}} \right)  ^2 \right)^{1/2}
 \norm{ \Pim( \pdedgei \Pim\phi) } _{L^\infty L^\infty}
 \Bigg]
 \\ & \aleq
h \sumi  \norm{r_h}_{L^2 L^{2}}  \left(  \norm{\Delta_h^{(i)} \Pim \phi}_{L^\infty L^\infty} \norm{  u_{i,h} }_{L^2 L^2}
+  \norm{\pdedgei  \Pim \phi}_{L^\infty L^\infty} \norm{  \pdedgei u_{i,h}  }_{L^2 L^2} \right)
\\& \aleq
 h \norm{r_h}_{L^2 L^{2}}  \left(  \norm{\Delta_h  \Pim \phi}_{L^\infty L^\infty} \norm{  \vuh }_{L^2 L^2}
+  \norm{\Gradedge  \Pim \phi}_{L^\infty L^\infty} \norm{  \Gradedge \vuh  }_{L^2 L^2} \right)
\\&
\aleq  h \norm{r_h}_{L^2 L^{2}}.
\end{aligned}
\end{equation*}
Consequently, applying  the density estimate \eqref{est_rho_l2l2}, and the momentum estimate \eqref{est_m_l2l2} indicates
\begin{equation*}
E_1(r_h) \lesssim h^\beta, \quad \beta = 1- \frac{\eps+2}{2\gamma} >0,  \mbox{ provided } \eps<2(\gamma -1),
\end{equation*}
for $r_h$ being $\vrh$ or $\vrh u_{i,h},$ $i=1,\ldots,d$.

\item Secondly, we deal with the error term $E_2$. In accordance with \eqref{n4c}, we have
\begin{equation*}
E_2(r_h) \aleq h \sum_{\sigma \in \facesint} \intSh{ |\jump{\vuh}\cdot \vc{n} \jump{r_h }|  }  \dt .
\end{equation*}
For $r_h$ being $\vrh$, we further  write
\begin{equation*}
\begin{aligned}
E_2(\vrh) &  \aleq
 h  \left( \int_0^T \sum_{\sigma \in \facesint} \intSh{ \jump{\vuh}^2 }  \dt  \right)^{1/2}  \left( \int_0^T \sum_{\sigma \in \facesint} \intSh{ \jump{\vrh}^2 }  \dt  \right)^{1/2}
\\& \aleq h h^{1/2} \left( \int_0^T \sum_{\sigma \in \facesint} \intSh{ \Ov{\vrh}^2 }  \dt  \right)^{1/2}
\\&
\aleq h^{3/2} h^{-1/2} \norm{\vrh}_{L^2L^2}
\aleq h^ \beta, \quad \beta= 1-\frac{\eps+2}{2\gamma}>0, \mbox{ as soon as }\eps < 2(\gamma-1).
\end{aligned}
\end{equation*}
Here we have used H\"older's inequality, \eqref{est_gradu}, \eqref{est_rho_l2l2}, and the fact $|\jump{\vrh}| < 2 \Ov{\vrh}$.

For $r_h$ being $\vrh u_{i,h}$, we get
\begin{equation*}
E_2(\vrh \vuh) \aleq
 h   \int_0^T \sum_{\sigma \in \facesint} \intSh{ |\jump{\vuh}\cdot \vc{n}|\; \left| \jump{\vrh } \Ov{\vuh}   +  \jump{\vuh } \Ov{\vrh}  \right| }  \dt :=T_1 +T_2 .
\end{equation*}
To control the residual term $T_1$ we apply H\"older's inequality, \eqref{est_ener}, \eqref{est_r_ujump}, inverse estimate \eqref{inverse_inequality} and the inequality $|\jump{\vrh}| < 2 \Ov{\vrh}$ to obtain
\begin{equation*}
\begin{aligned}
T_1 & \aleq
h \int_0^T \sum_{\sigma \in \facesint} \intSh{ |\jump{\vuh}\cdot \vc{n}|  \Ov{\vrh } |\Ov{\vuh}| }  \dt
  \\& \aleq
 h \left( \int_0^T \sum_{\sigma \in \facesint} \intSh{  \Ov{\vrh } \jump{\vuh}^2 } \right)^{1/2}
 \left( \int_0^T \sum_{\sigma \in \facesint} \intSh{  \Ov{\vrh } |\Ov{\vuh}|^2 } \right)^{1/2}
\\ & \aleq
 h^{(1-\eps)/2} .
\end{aligned}
\end{equation*}
Further, applying \eqref{est_r_ujump} we can control the residual term $T_2$ as
\begin{equation*}
T_2  =
 h   \int_0^T \sum_{\sigma \in \facesint} \intSh{ |\jump{\vuh}\cdot \vc{n} | \jump{\vuh }| \Ov{\vrh}   }  \dt   \aleq  h^{1-\eps}.
\end{equation*}
Therefore, we claim that provided $\eps < 2(\gamma-1)$ we have
\[E_2(r_h) \lesssim h^\beta, \quad \beta >0
\]
for $r_h$ being $\vrh$ or $\vrh u_{i,h},$ $i=1,\ldots,d$.

\item Next, we  consider the error term $E_3.$ Analogously as above, the  integration by parts formula \eqref{int_by_part_las}, H\"older's inequality, and the interpolation error \eqref{n4c3} yield
\begin{align*}
 E_3(r_h) &= h^\eps \int_0^T\sum_{\sigma \in \facesint} \intSh{  \jump{r_h } \jump{\Pim\phi }   } \dt
=-h^{\eps+1}\int_0^T\intO{r_h \Delta_h \Pim\phi }\dt
\\&
\lesssim h^{\eps+1}  \norm{r_h}_{L^1L^1}  \left(\norm{\phi }_{C^2} + h \norm{\phi }_{C^3}\right)
\lesssim h^{\eps+1}  \norm{r_h}_{L^1L^1}.
\end{align*}
  Using the estimates \eqref{est_rho_lilg} and \eqref{est_m}
we can conclude  for $r_h$ being $\vrh$ or $\vrh u_{i,h},$ $i=1,\ldots,d,$ that
\begin{align*}
& E_3(r_h)\lesssim h^{\eps+1}.
\end{align*}

\item Finally, using the estimates of kinetic energy \eqref{est_ener} and momentum \eqref{est_m} together with the interpolation error \eqref{n4c2_mesh} we obtain for $r_h$ being $\vrh$ or $\vrh u_{i,h},$ $i=1,\ldots,d$ that
\begin{align*}
 E_4(r_h) &= \int_0^T \intO{  r_h \vuh \cdot \left(\Grad \phi - \Gradmesh \Pid \big(\Pim\phi\big)\right) } \dt
\lesssim h \norm{\phi }_{C^2} \norm{  r_h \vuh  }_{L^1 L^1}
 \lesssim h  \norm{ r_h \vuh }_{L^\infty L^1}  \lesssim h.
\end{align*}
Consequently, we conclude the  consistency formulation of the convective terms in both equations \eqref{scheme_den} and \eqref{scheme_mom}, by collecting the above estimates of the four terms $E_j, \ j=1,\ldots, 4,$
\begin{subequations}\label{consistency_convective}
\begin{align}
&\intO{  \vrh \vuh \cdot \Grad \phi  } - \sum_{\sigma \in \facesint} \intSh{ F[ \vrh,\vuh ] \jump{ \Pim\phi}} \lesssim h^{\beta_1},
\\
&\intO{  \vrh \vuh\otimes\vuh : \Grad \bfphi } - \sum_{\sigma \in \facesint} \intSh{ F[ \vrh \vuh,\vuh ] \jump{ \Pim  \bfphi }} \lesssim h^{\beta_2},\label{cons_mom_convective}
\end{align}
\end{subequations}
for some $\beta_1,$ $\beta_2>0$ provided $0< \;  \eps < \min\{1, 2(\gamma-1)\}$.
\end{itemize}

\begin{Remark}
We would like to emphasize that the additional numerical diffusion of order $\mathcal{O}(h^{\eps+1})$ plays a crucial role in order to obtain  the consistency of the convective terms, which requires $\eps < \min\{1,2(\gamma-1)\}$. Indeed the deriviation of \eqref{consistency_convective} requires \eqref{est_r_ujump} and  uses Lemma \ref{lem_r1}, which benefits from the uniform bounds \eqref{est_rjump} of the $h^\eps$-terms.
\end{Remark}

\paragraph{Step 3 -- viscosity terms:}\hspace{1ex} \\
 In accordance with (\ref{n4c2_edge}) and \eqref{est_gradu}  we can control the viscosity terms. Indeed, we have
\begin{subequations}\label{consistency_viscosity}
\begin{equation}
\begin{split}
& \int_0^T \intO{ \Gradedge \vuh : \Grad \bfphi}\dt  -  \int_0^T \sum_{\sigma \in \facesint} \intSh{ \frac{1}{d_\sigma}\jump{ \vuh}  \cdot \jump{ \Pim  \bfphi   }  }  \dt
\\&=
\int_0^T \intO{ \Gradedge \vuh :( \Grad \bfphi - \Gradedge \Pim \bfphi   ) }\dt
\lesssim
\norm{\Gradedge \vuh}_{L^2L^2} h \norm{\bfphi}_{C^2}
\lesssim h,
\end{split}
\end{equation}
and for the divergence term we get
\begin{equation}
\begin{split}
& \int_0^T \intO{ \Divh \vuh\, \Divh \big(\Pim \bfphi\big)  } -\int_0^T \intO{\Divh \vuh \, \Div \bfphi}\dt
\\&=
\int_0^T \intO{\Divh \vuh \, \Big(\Divh \big(\Pim \bfphi\big)   -\Div \bfphi\Big) }\dt
\lesssim
\norm{\Divh \vuh}_{L^2L^2} h \norm{\bfphi}_{C^2}
\lesssim h,
\end{split}
\end{equation}
by using  \eqref{est_divu} and \eqref{n4c2_mesh}.
\end{subequations}
\paragraph{Step 4 -- pressure term:}\hspace{1ex} \\
The pressure term can be controlled by using the integration by parts formula \eqref{basic_eq2},
the interpolation error \eqref{n4c2_mesh}, and the estimate \eqref{est_rho_lilg}, i.e.,
\begin{equation}\label{consistency_pressure}
\begin{split}
\int_0^T  \sum_{\sigma \in \facesint}&  \intSh{ \Ov{p}_h \vc{n} \cdot \jump{ \Pim  \bfphi   }  } \dt
-   \int_0^T \intO{ p_h \Div \bfphi } \dt \\
&=- \int_0^T \sum_{\sigma \in \facesint} \intSh{ \Ov{\Pim  \bfphi } \cdot \vc{n} \jump{p_h }  } \dt
-  \int_0^T \sum_{K \in \grid} \int_K p_h \Div \bfphi \dx \dt
\\& = \int_0^T  \sum_{K \in \grid} p_K \sum_{\sigma \in \facesK} \int_{\sigma}  \Ov{\Pim  \bfphi  } \cdot \vc{n} \ds \dt  -  \int_0^T \sum_{K \in \grid} \int_K p_h \Div \bfphi \dx \dt
\\
&= \int_0^T  \sum_{K \in \grid} \int_K p_h \left( \Divh \big(\Pim  \bfphi \big) -  \Div \bfphi \right) \dx \dt
\lesssim
\norm{p_h}_{L^\infty L^1} h \norm{\bfphi}_{C^2}
\lesssim h.
\end{split}
\end{equation}
Collecting the inequalities \eqref{consistency_timederivative}--\eqref{consistency_pressure} we complete the proof of Theorem \ref{Tm2}.
\end{proof}

\section{Convergence}\label{sec_Convergence}
In this section, we show the main result, the convergence of the numerical solution to the strong solution of the system \eqref{ns_eqs} on the
lifespan of the latter.
To this end we start by introducing the concept of the dissipative measure-valued (DMV) solutions to (\ref{ns_eqs}). The interested reader may consult~\cite{FGSWW} for the discussion about the concept of DMV solutions and the DMV--strong uniqueness principle that will be used later in this section.
\begin{Definition}[DMV solution]\label{def_dmvs}
We say that a parametrized family of probability measures $\{ \Nu _{t,x} \}_{(t,x)\in (0,T)\times\Omega}$,
\begin{equation*}
\Nu _{t,x} \in L^{\infty}_{weak} \Big((0,T)\times\Omega;\, \mathcal{P}(Q) \Big),\  Q = \left\{ [\vr, \vu] \ \Big|\ \vr \in [0, \infty), \ \vu \in R^N \right\},
\end{equation*}
is \emph{a dissipative measure-valued (DMV) solution} of the Navier--Stokes system in $(0,T)\times\Omega$, with the initial condition $\Nu_{0,x}\in \mathcal{P}(Q)$ and dissipation defect $\mathcal{D} \in L^{\infty}(0,T),\ \mathcal{D} \ge 0$, if the following holds:
\begin{itemize}
\item
\begin{equation*}
\left[\int_{\Omega} \myangle{\Nu_{t,x};\vr} \phi(t,\cdot)\dx\right]_{t=0}^{t=\tau} =
\int_0^{\tau}  \int_{\Omega}[\myangle{\Nu_{t,x};\vr} \pd_t \phi + \myangle{\Nu_{t,x};\vr \bfu}\cdot \nabla_x \phi] \dx \dt
\end{equation*}
for any $0 \leq \tau \leq T$ and  $\phi \in C^1\big([0,T]\times \Omega\big)$;

\item
\begin{multline*}
\left[\int_{\Omega} \myangle{\Nu_{t,x};\vr \bfu} \bfphi(t,\cdot)\dx \right]_{t=0}^{t=\tau}  =
\int_0^{\tau}  \int_{\Omega}[\myangle{\Nu_{t,x};\vr \bfu} \pd_t \bfphi + \myangle{\Nu_{t,x};\vr \bfu \otimes \bfu + p(\vr) \I} : \nabla_x \bfphi  ] \dx \dt
\\
- \int_0^{\tau} \int_{\Omega} \mathcal{S}(\Grad \bfu): \Grad \bfphi \dx \dt
+ \int_0^{\tau} \myangle{r^M;\nabla_x \bfphi }\dt
\end{multline*}
for any $0 \leq \tau \leq T$ and  $\bfphi \in C^1_c\big([0,T]\times\Omega; R^d\big)$,
where
\[
 \vu = \left< \Nu_{t,x} ; \vu \right>,\ \vu \in L^2(0,T; W^{1,2}(\Omega; R^d)),
\]
\[
\mathcal{S}(\Grad \bfu) = \mu (\Grad \vu + \Grad^t \vu)  +  \lambda \Div \vu \I,
\text{ and }
r^M\in L^1\big(0,T;\mathcal{M}({\Omega})\big);
\]

\item
\begin{equation*}
\left[\int_{\Omega} \myangle{\Nu_{t,x};\frac12 \vr \bfu^2+\Hc(\vr)} \dx \right]_{t=0}^{t=\tau}  + \int_0^{\tau} \int_{\Omega} \mathcal{S}(\Grad \bfu):\nabla_x\bfu \dx \dt + \mathcal{D}(\tau) \le 0,
\end{equation*}
for a.a. $0 \leq \tau \leq T$. The dissipation defect $\mathcal{D}$ dominates the concentration measure $r^M$, specifically,
\begin{equation*}
\left|\myangle{r^M(\tau); \phi }\right| \lesssim \xi(\tau) \mathcal{D}(\tau) \norm{\phi }_{C({\Omega})}, \mbox{ for some}\ \xi \in L^{1}(0,T).
 \end{equation*}
%
\end{itemize}

\end{Definition}

%

\subsection{Convergence to dissipative measure-valued solution}
In this subsection, we show that any Young measure generated by a family of numerical solutions is a DMV solution in the sense of Definition in~\ref{def_dmvs}.

\begin{Theorem}\label{thm_dmvs}
Let $\{(\vrh^k,\bfu_h^k)\}_{k=1}^{N_T}$ be a family of solutions
generated by the numerical scheme \eqref{scheme}, with $\Delta t \approx h$, $1<\gamma <2$, $0<\eps <\min\{1, 2(\gamma-1)\}$, and the initial data satisfying
\[
\vr_0  \in  L^\gamma(\Omega), \ \vr_0>0,\  \vu_0 \in L^2 (\Omega; R^d).
\]
Then any Young measure $\{ {\Nu_{t,x}}\}_{(t,x)\in(0,T)\times\Omega} $ generated by $(\vrh^k,\bfu_h^k)$ for $h\rightarrow 0$ represents a dissipative measure-valued solution of the Navier--Stokes system (\ref{ns_eqs}) in the sense of Definition~\ref{def_dmvs}.
\end{Theorem}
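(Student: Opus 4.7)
The plan is to combine the uniform bounds of Corollary~\ref{ests} and Lemma~\ref{ests2} with the fundamental theorem on Young measures, then pass to the limit in the consistency identities \eqref{cP1}--\eqref{cP2} of Theorem~\ref{Tm2} and in the energy balance \eqref{energy_stability}, extracting a subsequence $h_n \to 0$ (with $\Delta t\approx h$) along the way. First I would use the a priori bounds \eqref{est_rho_lilg}, \eqref{est_ener}, \eqref{est_m}, \eqref{est_gradu}, \eqref{est_divu}, \eqref{est_ul2l6} to obtain weak-$*$ limits $\vrh\rightharpoonup^*\overline{\vr}$ in $L^\infty(0,T;L^\gamma(\Omega))$ and $\vrh\vuh\rightharpoonup^*\overline{\vr\vu}$ in $L^\infty(0,T;L^{2\gamma/(\gamma+1)}(\Omega))$, and to recognize, via the discrete gradient bound and a standard piecewise-affine reconstruction on the dual grid, that $\vuh$ converges weakly in $L^2(0,T;L^6(\Omega))$ to some $\bfu\in L^2(0,T;W^{1,2}(\Omega;R^d))$ with $\Gradedge\vuh\rightharpoonup\Grad\bfu$ and $\Divh\vuh\rightharpoonup\Div\bfu$ in $L^2L^2$. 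The fundamental theorem on Young measures then produces $\{\Nu_{t,x}\}$ such that $F(\vrh,\vuh)\rightharpoonup^*\langle\Nu_{t,x};F\rangle$ in the biting sense for any continuous integrand $F(\vr,\vu)$ of admissible growth; in particular $\overline{\vr}=\langle\Nu;\vr\rangle$, $\overline{\vr\vu}=\langle\Nu;\vr\vu\rangle$, and $\bfu=\langle\Nu_{t,x};\vu\rangle$ a.e.

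Next I would pass to the limit in the consistency equations. The continuity identity \eqref{cP1} gives directly the first item of Definition~\ref{def_dmvs}, since the error $e_{1,h}$ vanishes by Theorem~\ref{Tm2} and the continuity equation is linear in $(\vr,\vr\vu)$. In the momentum identity \eqref{cP2} the viscous and divergence terms pass by the weak convergence of $\Gradedge\vuh$ and $\Divh\vuh$, combined with the interpolation errors \eqref{n4c2_edge}--\eqref{n4c2_mesh}; the time-derivative and initial-data contributions reduce to the Young-measure averages after using that $\vrh^0=\Pim\vr_0\to\vr_0$ in $L^\gamma(\Omega)$ and $\vuh^0=\Pim\vu_0\to\vu_0$ in $L^2(\Omega)$. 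The nonlinear fluxes $\vrh\vuh\otimes\vuh$ and $p(\vrh)\I$ do not converge strongly, but their biting limits coincide with $\langle\Nu;\vr\vu\otimes\vu+p(\vr)\I\rangle$ plus a signed-measure-valued concentration defect $r^M\in L^1(0,T;\mathcal{M}(\overline{\Omega}))$, which is precisely the object appearing in Definition~\ref{def_dmvs}.

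Finally, the discrete total energy balance \eqref{energy_stability} passes to the limit as an inequality. Weak lower semicontinuity applied to the convex integrands $\tfrac12\vr|\vu|^2$ and $\Hc(\vr)$ shows that the limit of the discrete total energy dominates $\int_\Omega\langle\Nu_{t,x};\tfrac12\vr|\vu|^2+\Hc(\vr)\rangle\dx$, and the excess defines the nonnegative dissipation defect $\mathcal{D}(\tau)\in L^\infty(0,T)$; the nonnegative numerical-dissipation terms on the right-hand side of \eqref{energy_stability} involving $\TS$, $h^\eps$-jumps and upwinding are simply discarded, while the viscous contributions are handled by lower semicontinuity of $\Gradedge\vuh$ and $\Divh\vuh$. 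The principal technical obstacle, which I expect to dominate the work, is the defect-comparison bound $|\langle r^M(\tau);\phi\rangle|\lesssim\xi(\tau)\mathcal{D}(\tau)\norm{\phi}_{C(\overline{\Omega})}$: this is established by a direct defect-to-defect comparison, using the pointwise inequalities $|\vr\vu\otimes\vu|\le\vr|\vu|^2$ and $p(\vr)=(\gamma-1)\Hc(\vr)$ to bound the concentrations of the convective tensor and of the pressure by those of the kinetic and internal energies, and hence by $\mathcal{D}$, which completes the verification of all three items of Definition~\ref{def_dmvs}.
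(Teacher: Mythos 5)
Your proposal follows essentially the same route as the paper: weak limits from the a priori bounds, Young measure generation, passage to the limit in the consistency identities \eqref{cP1}--\eqref{cP2} with the concentration remainder $r^M$ defined as the gap between the measure limit of $\vr\vu\otimes\vu+p(\vr)\I$ and its Young-measure average, lower semicontinuity in the energy balance to define $\mathcal{D}$, and the domination of $r^M$ by $\mathcal{D}$ via the pointwise bound $|\vr\vu\otimes\vu+p(\vr)\I|\lesssim \vr|\vu|^2+\Hc(\vr)$. The only cosmetic difference is that the paper packages the final defect-comparison step and the nonnegativity of $\mathcal{D}$ as citations of \cite[Lemma~2.1]{FGSWW} rather than carrying out the comparison by hand.
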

\begin{proof}

We may use the energy estimates (\ref{theorem_stability}) to deduce that, at least for suitable subsequences,
\[
\begin{split}
 \vrh &\to \vr \ \mbox{weakly-(*) in}\ L^\infty(0,T; L^\gamma(\Omega)),\ \vr \geq 0 \\
 \vuh &\to \bfu \ \mbox{weakly in}\ L^2((0,T) \times \Omega; R^d),\\
 \mbox{where} \ \bu  & \in L^2(0,T; W^{1,2}(\Omega)), \ \Gradedge \vuh \to \nabla _x \bu \ \mbox{weakly in} \ L^2((0,T) \times \Omega;
R^{d \times d}), \\
\vrh \vuh  &\to \widetilde{\vr \bu} \ \  \mbox{weakly-(*) in}\ L^\infty(0,T; L^{\frac{2\gamma}{\gamma + 1}}(\Omega; R^d)).
\end{split}
\]
where the superscript  `$\sim$' denotes the $L^1$-weak limit.

Note that, the limit functions satisfy the equation of continuity in the form
\begin{equation*} \label{M2}
- \intO{ \vr_0 \phi  (0, \cdot) } = \int_0^T \intO{ \left[  \vr \partial_t \phi  + \widetilde{\vr \bu} \cdot \nabla _x \phi  \right] } \dt, \;
\mbox{ for all } \phi  \in \DC([0, \infty) \times\Omega ),
\end{equation*}
which can be further rewritten as
\begin{equation} \label{M3}
\left[ \intO{ \vr \phi  (t, \cdot) } \right]_{t = 0}^{t = \tau} = \int_0^\tau
\intO{ \left[  \vr \partial_t \phi  + \widetilde{\vr \bu} \cdot \nabla _x \phi  \right] } \dt
\end{equation}
for any $0 \leq \tau \leq T$ and any $\phi  \in C^\infty([0,T] \times  \Omega)$.


 In accordance with the weak convergence statement derived in the preceding part, the family $[\vrh, \vuh]$ generates a Young measure - a parameterized measure~\cite{BALL2,PED1}
\[
\Nu_{t,x} \in L^\infty((0,T) \times \Omega; \mathcal{P}([0, \infty) \times R^d)) \ \mbox{for a.e.}\ (t,x) \in (0,T) \times \Omega,\quad \mbox{with}\quad \Nu_{0,x} = \delta_{[\vr_0(x), \bu_0(x)]},
\]
such that
\[
\left< \Nu_{t,x}, g(\vr, \bu) \right> = \widetilde{g(\vr, \bu)}(t,x)\ \mbox{for a.e.}\ (t,x) \in (0,T) \times \Omega,
\]
for any $g \in C([0, \infty) \times R^d)$ such that
\[
g(\vrh, \vuh) \to \widetilde{g(\vr, \bu)} \ \mbox{weakly in}\ L^1((0,T) \times \Omega).
\]
Accordingly, the equation of continuity (\ref{M3}) can be written as
\begin{equation} \label{M4}
\left[ \intO{ \vr \phi  (t, \cdot) } \right]_{t = 0}^{t = \tau} = \int_0^\tau
\intO{ \left[  \vr \partial_t \phi  + \left< \Nu_{t,x}, \vr \bu \right>  \cdot \nabla _x \phi  \right] } \dt .
\end{equation}

For the consistency formulation of the momentum equation (\ref{cP2}), we apply a similar treatment. Whence letting $h \to 0$ in (\ref{cP2}) gives rise to
\begin{equation} \label{M6}
\begin{split}
&\left[ \intO{ \left< \Nu_{t,x} ; \vr \bu \right> \cdot  \vcg{\phi }(t, \cdot) } \right]_{t = 0}^{t = \tau} = \int_0^\tau \intO{
\Big[ \left< \Nu_{t,x}; \vr \bu \right> \cdot \partial_t \vcg{\phi } + \left< \Nu_{t,x}; \vr \bu \otimes \bu + p(\vr)\I \right> : \nabla _x \vcg{\phi }  \Big] } \ \dt
\\
\quad & - \int_0^\tau \intO{ \Big[ \mu \Grad \bu : \nabla _x \vcg{\phi } + (\mu + \lambda) \Div \bu \cdot \Div \vcg{\phi } \Big] } \dt
+ \int_0^\tau \intO{ r^M : \nabla _x \bfphi } \dt
\end{split}
\end{equation}
for any $0 \leq \tau \leq T$, $\bfphi  \in \DC([0,T] \times \Omega; R^d)$ where the \emph{concentration remainder} reads
\[
r^M = \left\{ \vr \bu \otimes \bu + p(\vr) \mathbb{I} \right\} - \left< \Nu_{t,x}; \vr \bu \otimes \bu + p(\vr) \mathbb{I} \right>
\in [ L^\infty(0,T; \mathcal{M}(\Omega)) ]^{d \times d}.
\]
Hereafter $\{ s \}$ denotes the weak-(*) limit of a sequence $s_h$ in $\mathcal M([0,T]\times\Omega)).$

Similarly, letting $h \to 0$ in the energy inequality (\ref{energy_stability}) yields
\begin{equation} \label{M7}
\begin{split}
\left[ \intO{  \left< \Nu_{t,x}; \frac{1}{2}\vr | {\bu } |^2 + \Hc(\vr) \right>  } \right]_{t = 0}^{t = \tau} +
\int_0^\tau \intO{  \left( \mu |\Grad \bu |^2 + (\mu + \lambda) |\Div \bu |^2 \right) } \dt  + \mathcal{D}(\tau) &\leq 0
\end{split}
\end{equation}
for a.e. $\tau \in [0,T]$, with the  \emph{dissipation defect}
\begin{align*}
\mathcal D(\tau)&= \intO{\left\{\frac 1 2 \vr | {\bu } |^2 + \Hc(\vr)\right\}}-\intO{\left< \Nu_{\tau,x};  \frac 1 2 \vr | {\bu } |^2 + \Hc(\vr)\right>}\\
& + \int_0^{\tau}\intO{\left\{\mu|\Grad\bu|^2+(\mu+\lambda)|\Div\bu|^2\right\}}\dt - \int_0^{\tau}\intO{\mu|\Grad\bu|^2+(\mu+\lambda)|\Div\bu|^2}\dt
\end{align*}
 satisfying
\begin{equation} \label{M8}
\mathcal{D}(\tau) \geq  \liminf_{h \to 0} \left( \int_0^\tau  \norm{\Gradedge\vuh}_{L^2}^2  \dt \right) - \int_0^\tau \intO{ |\nabla _x \bu |^2 } \dt.
\end{equation}
Note that $\mathcal D \geq 0$ is a consequence of \cite[Lemma~2.1]{FGSWW} and the non--negativity of the total energy and $\mathcal S(\Grad\bu):\Grad\bu.$ Since  $|\vr_h\bu_h\otimes\bu_h + p(\vr_h)\mathbb I| \lesssim \vr_h|\bu_h|^2+\mathcal{H}(\vr_h),$ we again recall \cite[Lemma~2.1 ]{FGSWW} for
$$F(\vr,\bu)= | \vr \bu\otimes\bu + p(\vr)\mathbb I|, \quad \mbox{and} \quad  G(\vr,\bu)=\frac 1 2\vr |\bu|^2+\mathcal{H}(\vr)$$
to conclude
\begin{equation} \label{M9}
\int_\Omega 1\,|\textrm{d}\,r^M| \lesssim  \mathcal{D}, \quad \mbox{for a.e. in }  (0,T).
\end{equation}


Collecting \eqref{M4}--\eqref{M9} implies that the Young measure $\{ \Nu_{t,x} \}_{t,x \in (0,T) \times \Omega}$ represents
a dissipative measure-valued solution of the Navier--Stokes system (\ref{ns_eqs})
in the sense of Definition \ref{def_dmvs}.
Seeing that validity of (\ref{M4}) and (\ref{M6}) can be extended to the class of test functions from $C^1([0,T] \times  \Omega ; R^d)$, we have proved Theorem \ref{thm_dmvs}.
\end{proof}

\subsection{Convergence to strong solution}
In the previous  subsection, we have shown that the numerical solution generates the dissipative measure-valued solution.
We admit that the conclusion of Theorem~\ref{thm_dmvs} is rather weak, also due to the non-uniqueness of Young measure. However,
we may directly use the DMV-strong uniqueness principle established in \cite[Theorem 4.1]{FGSWW} to obtain convergence to the strong solution as long as it exists.
\begin{Theorem}[Convergence to strong solution]\label{thm_convergence}
In addition  to the hypotheses of Theorem \ref{thm_dmvs}, suppose that the Navier--Stokes system (\ref{ns_eqs}) endowed with the initial data $(\vr_0, \bfu_0)$  admits a strong solution $(\vr,\bfu)$  belonging to the class
\begin{equation*}
\vr, \Grad \vr, \bfu, \Grad \bfu \in C([0,T]\times {\Omega}),\  \partial_t \bfu \in L^2\left(0,T;C( {\Omega};R^d) \right),\ \vr>0.
\end{equation*}
Then
\begin{equation*}
\vrh \rightarrow \vr \mbox{ (strongly) in } L^{\gamma}\left((0,T)\times \Omega \right),\
\bfu_h \rightarrow \bfu \mbox{ (strongly) in } L^2\left((0,T)\times \Omega; R^d \right).
\end{equation*}
\end{Theorem}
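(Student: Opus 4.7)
The plan is a two-step argument that combines Theorem \ref{thm_dmvs} with the dissipative measure-valued--strong uniqueness principle of \cite[Theorem 4.1]{FGSWW}. By Theorem \ref{thm_dmvs}, any weak-(*) accumulation point (along a subsequence $h_n \to 0$) of the family $\{(\vrh, \vuh)\}$ generates a Young measure $\{\Nu_{t,x}\}_{(t,x) \in (0,T)\times\Omega}$ which is a DMV solution of \eqref{ns_eqs} in the sense of Definition \ref{def_dmvs}, with Dirac initial datum $\Nu_{0,x} = \delta_{[\vr_0(x), \bu_0(x)]}$ and dissipation defect $\mathcal{D}$ dominating the concentration measure via \eqref{M9}. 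Under the assumed regularity of the strong solution $(\vr, \bu)$, the DMV--strong uniqueness principle applies and forces
\[
\Nu_{t,x} = \delta_{[\vr(t,x),\, \bu(t,x)]} \ \mbox{ for a.e. } (t,x) \in (0,T) \times \Omega, \qquad \mathcal{D} \equiv 0.
\]

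The second step is to upgrade this Dirac-type Young-measure convergence to strong convergence in the spaces stated in the theorem. For the density, the Dirac property gives $\widetilde{\vr^\gamma}(t,x) = \vr(t,x)^\gamma$, so that $\vrh^\gamma \to \vr^\gamma$ weakly in $L^1$. Together with the weak convergence $\vrh \to \vr$ in $L^\gamma$ and the strict convexity of $z \mapsto z^\gamma$ for $\gamma > 1$ (yielding uniform convexity of $L^\gamma((0,T) \times \Omega)$), this implies strong convergence $\vrh \to \vr$ in $L^\gamma((0,T) \times \Omega)$. For the velocity, I combine the Dirac property $\widetilde{\vr |\vu|^2}(t,x) = \vr(t,x) |\bu(t,x)|^2$ with the vanishing of the dissipation defect $\mathcal{D}$, which forces $\int_\Omega \vrh |\vuh|^2 \dx$ to converge to $\int_\Omega \vr |\bu|^2 \dx$ and drives the cross term $\int_\Omega \vrh \vuh \cdot \bu \dx$ to the correct limit. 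Expanding $\int_0^T \int_\Omega \vrh |\vuh - \bu|^2 \dx \dt$ and exploiting that $\vr$ is continuous and strictly positive on $[0,T] \times \Ov{\Omega}$ (hence bounded below by a positive constant) yields $\vuh \to \bu$ strongly in $L^2((0,T) \times \Omega; R^d)$.

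Because the limiting strong solution $(\vr, \bu)$ is independent of the chosen subsequence, a standard Urysohn-type argument lifts the convergence from subsequences to the whole family $h \to 0$. The only non-routine point is verifying that all hypotheses of the DMV--strong uniqueness theorem are actually met by the Young measure constructed in Theorem \ref{thm_dmvs}; this essentially amounts to checking that the dissipation defect dominates the concentration remainder in $r^M$, which was already ensured in \eqref{M9}. The rest is standard bookkeeping on Young measures and uniform convexity of Lebesgue spaces, so I expect no serious technical obstacle beyond these verifications.
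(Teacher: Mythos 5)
Your proposal is correct and follows essentially the same route as the paper: the paper's own proof consists precisely of invoking Theorem \ref{thm_dmvs} together with the DMV--strong uniqueness principle of \cite[Theorem 4.1]{FGSWW} to conclude that the Young measure collapses to the Dirac mass at the strong solution (with vanishing dissipation defect), whence strong convergence without oscillations. You merely spell out in more detail the final upgrade from the Dirac property to strong convergence in $L^\gamma$ and $L^2$, which the paper leaves implicit.
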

\begin{Remark}
By strong solution we mean that all generalized derivatives appearing in the equations can be identified with integrable functions, see \cite{FGSWW}.
\end{Remark}

Indeed, the DMV--strong uniqueness implies that the Young measure generated by the family of numerical solutions coincides at
a.a. point $(t,x)$ with the Dirac mass supported by the smooth solution of the problem. In particular, the numerical solutions converge strongly and no oscillations occur.

\medskip

\begin{Remark}

We have constructed solution on a space-periodic domain $\Omega$. When considering a polyhedral domain, the existence of smooth solutions remains open and may be a delicate task. To avoid this problem, one has to approximate a smooth domain by a family of polyhedral domains analogously as in \cite{FeLu18_CR}. Note, however, this problem does not occur in the case of periodic domain.
\end{Remark}

If, in addition, we assume the density is uniformly bounded, meaning independently of the numerical step, the results of Theorems~\ref{thm_dmvs} and \ref{thm_convergence} may be possibly extended to an  unstructured grid. Indeed the only difference of the proof would be  showing the consistency of the convective terms in \eqref{consistency_convective}. The estimate of the error terms $E_1(\vrh)$ and $E_1(\vrh\vuh)$ could be done without the discrete integration by parts thanks to $L^\infty-$bound on the density.
Moreover, in view of the conditional regularity result \cite{Sun}, we obtain the
unconditional convergence to the strong solution since the DMV solution with bounded density is regular.

\begin{Theorem}[Convergence with bounded density]\label{thm_convergence_BD}
Let $d = 3$.
In addition  to the hypotheses of Theorem \ref{thm_dmvs}, suppose that
\begin{itemize}
\item
the initial data belong to the class
\[
\vr_0 \in W^{3,2}(\Omega),\ \vc{u}_0 \in W^{3,2}(\Omega; R^d);
\]
\item
bulk viscosity vanishes, meaning
 \[
 \lambda + \frac{2}{3} \mu = 0;
 \]
 \item
 \[
 \| \varrho_h \|_{L^\infty((0,T) \times \Omega)} \leq c
 \]
 uniformly for $h \to 0$.

\end{itemize}

Then
\begin{equation*}
\vrh \rightarrow \vr \mbox{ (strongly) in } L^{q}\left((0,T)\times \Omega \right),\ q \geq 1,\
\bfu_h \rightarrow \bfu \mbox{ (strongly) in } L^2\left((0,T)\times \Omega; R^d \right),
\end{equation*}
$(\varrho, \bfu)$ is the strong solution to the Navier--Stokes system (\ref{ns_eqs}) with the initial data $(\vr_0, \bfu_0)$.
\end{Theorem}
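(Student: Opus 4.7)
The plan is to combine the DMV convergence of Theorem~\ref{thm_dmvs} with a bootstrapping argument that upgrades the DMV solution with bounded density to a global strong solution via the conditional regularity criterion of \cite{Sun}; strong convergence then follows exactly as in Theorem~\ref{thm_convergence} from the DMV--strong uniqueness principle.

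First, I would apply Theorem~\ref{thm_dmvs} to extract, along a subsequence, a Young measure $\{\Nu_{t,x}\}$ generated by $(\vrh, \vuh)$ that represents a DMV solution to~\eqref{ns_eqs} with initial data $(\vr_0, \bu_0)$. Because $\|\vrh\|_{L^\infty((0,T)\times\Omega)} \leq c$ uniformly in $h$, the density marginal of $\Nu_{t,x}$ is a.e.\ supported in $[0,c]$; in particular, $\vr := \langle \Nu_{t,x};\vr\rangle$ satisfies $\|\vr\|_{L^\infty} \leq c$, and all concentration defects involving $p(\vr)$ or $\vr\bu\otimes\bu$ inherit improved integrability compared with Theorem~\ref{thm_dmvs}.

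Under the present hypotheses ($d=3$, $\vr_0\in W^{3,2}(\Omega)$ with $\vr_0>0$, $\bu_0\in W^{3,2}(\Omega;R^d)$, $\lambda+\tfrac{2}{3}\mu=0$), classical local existence theory (Matsumura--Nishida / Cho--Kim--Kim) furnishes a unique local strong solution $(\tilde\vr,\tilde\bu)$ of~\eqref{ns_eqs} on a maximal interval $[0,T_*)$ in the regularity class required by Theorem~\ref{thm_convergence}, and the blow-up criterion of \cite{Sun} ensures that this solution can be continued past $T_*$ whenever $\limsup_{t\nearrow T_*}\|\tilde\vr(t,\cdot)\|_{L^\infty}<\infty$. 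The key step is to transport the numerical $L^\infty$ bound onto this strong solution: for every $T'<T_*$ I would invoke DMV--strong uniqueness \cite[Theorem~4.1]{FGSWW} on $[0,T']$ to obtain $\Nu_{t,x}=\delta_{[\tilde\vr(t,x),\,\tilde\bu(t,x)]}$ a.e.\ on $(0,T')\times\Omega$, so that $\tilde\vr(t,x)=\langle\Nu_{t,x};\vr\rangle\leq c$ there. Letting $T'\nearrow T_*$ yields $\|\tilde\vr\|_{L^\infty((0,T_*)\times\Omega)}\leq c$, and Sun's criterion then forces $T_*\geq T$.

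With global existence of the strong solution on $[0,T]$ secured, one more application of DMV--strong uniqueness gives $\Nu_{t,x}=\delta_{[\vr(t,x),\bu(t,x)]}$ a.e., and therefore, as in Theorem~\ref{thm_convergence}, $\vrh\to\vr$ strongly in $L^\gamma((0,T)\times\Omega)$ and $\vuh\to\bu$ strongly in $L^2((0,T)\times\Omega;R^d)$. The upgrade to $L^q$ convergence of the density for every $q\geq 1$ is then an immediate consequence of the uniform $L^\infty$ bound: for $q\geq\gamma$ one estimates
\[
\|\vrh-\vr\|_{L^q}^q \leq (2c)^{q-\gamma}\,\|\vrh-\vr\|_{L^\gamma}^\gamma \longrightarrow 0,
\]
while the case $1\leq q<\gamma$ follows from the embedding $L^\gamma\hookrightarrow L^q$ on the bounded domain $\Omega$. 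I expect the only delicate point to be the bootstrapping step---rigorously transferring the $L^\infty$ bound from the Young measure to the local strong solution and closing the continuation argument via \cite{Sun}---since the remaining ingredients are direct consequences of results already established in the excerpt or standard local existence theory.
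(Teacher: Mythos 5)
Your proposal is correct and follows exactly the route the paper intends: the paper gives no detailed proof of this theorem, only the remark that the conditional regularity (blow-up) criterion of \cite{Sun} combined with the DMV--strong uniqueness principle yields unconditional convergence, and your bootstrapping argument (local strong solution, transfer of the uniform $L^\infty$ density bound via DMV--strong uniqueness on $[0,T']$, continuation past $T_*$ by Sun's criterion, then strong convergence as in Theorem~\ref{thm_convergence} upgraded to $L^q$ by the $L^\infty$ bound) is a faithful and correct elaboration of that sketch.
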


The condition on vanishing bulk viscosity is technical and we refer to \cite{Sun} for the discussion of its necessity. We point out that Theorem
\ref{thm_convergence_BD} guarantees \emph{unconditional} convergence of the scheme without the {\it a priori} hypothesis of the existence of smooth solution. In other words, uniform boundedness of the numerical densities implies  the existence of global smooth solution as long as the initial data are sufficiently regular.
It is also worth noting that boundedness of the numerical densities is still a considerably weaker assumptions than the hypothesis made by Jovanovi\' c \cite {jovanovic}.

\section{Numerical experiment}\label{sec_numerics}
In this section we show the numerical performance of scheme \eqref{scheme} in  two space dimensions.
Note that scheme \eqref{scheme} is nonlinear,  thus we solve it numerically by a fixed-point iteration. For each sub-iteration, we set the time step as $\TS = \text{CFL} \frac{h}{(|u| + c)_{\max}}$, where
$\text{CFL}=0.3$, $c=\sqrt{\gamma p/\rho}$.  We set the viscosity coefficients $\mu=\lambda=0.01$ and the adiabatic coefficient $\gamma=1.4$ in all experiments. Moreover, we choose the artificial diffusion $\eps=0.6$ which satisfies the assumption of $0< \eps <\min\{1, 2(\gamma-1)\}$.\\
\bigskip

{\bf Experiment 1.}
 First we validate the accuracy of the scheme by considering
\[ \rho_{ref} = 2+ \cos (2 \pi (x+y)), \quad \vu_{ref}= \left( \frac{\sin (2\pi t)}{2+\cos(2\pi(x+y))}, -\frac{\sin (2\pi t)}{2+\cos(2\pi(x+y))} \right)^T
\]
with the corresponding driving force in the momentum equation.

We compute the relative error $e_{\phi_h}$ for $\phi \in \{\vr, \vu, \nabla \vu \}$ in the corresponding norms, and the experimental order of convergence (EOC),  where
\[e_{\phi_h}=\frac{\norm{\phi_h - \phi_{ref}}}{\norm{\phi_{ref}}}, \quad \text{EOC}= \log_2\frac{e_{\phi_{2h}}}{e_{\phi_h}},
\]
and $\phi_{ref}$ denotes the reference solution. From the numerical results, we observe the first order of convergence of the scheme, see Table~\ref{tab:smooth}.\\
\begin{table}[h!]\centering
\caption{Numerical convergence for Experiment~1.}
\label{tab:smooth}
\begin{tabular}{c|c|c|c|c|c|c|c|c}\hline
  h  &  $\norm{e_{\Grad \vu}}_{L^2(L^2)}$ &  EOC  &  $\norm{e_{\vu}}_{L^2(L^2)}$ &  EOC    &  $\norm{e_{\vr}}_{L^1(L^1)}$ &  EOC  & $\norm{e_{\vr}}_{L^{\infty}(L^{\gamma})}$ &  EOC  \\ \hline
  1/32 & 4.21e-02	& --    &3.43e-03	 & --   &1.24e-03  & --   &4.28e-02  & -- \\
  1/64 & 1.78e-02 	&1.24	&1.39e-03	 &1.30	&4.95e-04  &1.32  &1.81e-02  &1.24  \\
  1/128 &7.75e-03	&1.20	&5.88e-04	 &1.24 	&2.04e-04  &1.28  &7.86e-03  &1.21  \\
  1/256 & 3.51e-03	&1.14 	&2.59e-04	 &1.18 	&8.69e-05  &1.23  &3.50e-03  &1.17 \\
\hline
\end{tabular}
\end{table}

{\bf Experiment 2.}
In this experiment, we simulate the Gresho--vortex flow~\cite{DeTa, NBALM, hoseksheMAC}. The initial state is the vortex of radius $r_0=0.2$ located at $(0.5,0.5)$ with
\[
\vr(0,\bfx) =1, \quad
\vu(0,\bfx)= \left( \begin{array}{c}
y-0.5 \\ 0.5-x
\end{array}\right)\,\frac{u_r(r)}{r}, \quad  \text{with} \quad
u_r(r)= \sqrt{\gamma}\left\{ \begin{array}{cc}
2 r/r_0 & \mbox{if } 0\leq  r<r_0/2, \\
2 (1- r/r_0)  & \mbox{if } r_0/2\leq  r< r_0, \\
0 & \mbox{if } r\geq r_0 ,
\end{array}\right.
\]
where $r=\sqrt{(x-0.5)^2 +(y-0.5)^2}$.
We present the evolution of the flow in Figure~\ref{fig:gresho_vortex}  for the mesh size $h=1/128$.  We can clearly recognize that the solution is in a good agreement with those presented in the literature, see \cite{hoseksheMAC}.
To further invest the numerical convergence, we present in Table \ref{tab:gresho_vortex} the errors for different mesh parameters and the reference solution is computed at a fine mesh $h=1/2048$. We observe even better than the first order convergence.

\begin{figure}[h!]
\centering
\begin{subfigure}{0.32\textwidth}
\includegraphics[width=\textwidth]{./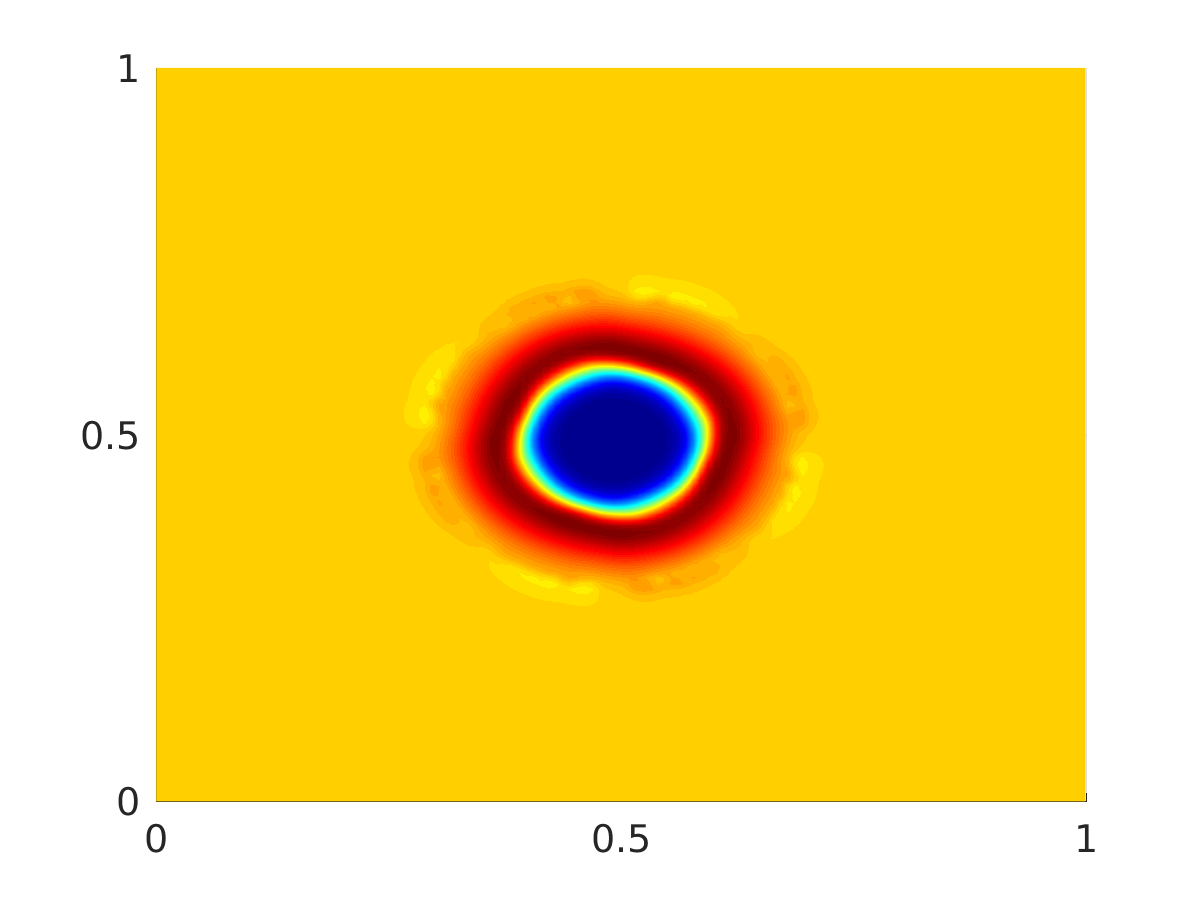}
\end{subfigure}
\begin{subfigure}{0.32\textwidth}
\includegraphics[width=\textwidth]{./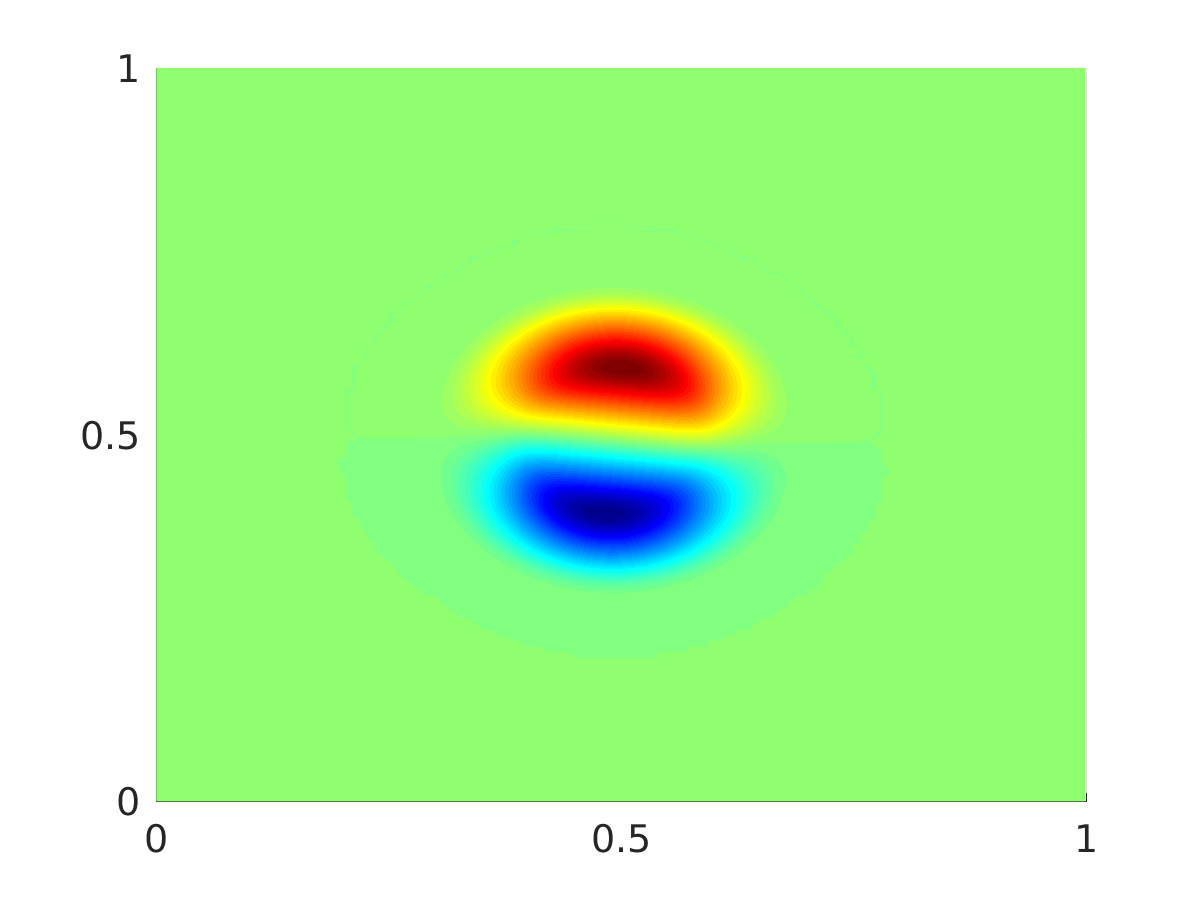}
\end{subfigure}
\begin{subfigure}{0.32\textwidth}
\includegraphics[width=\textwidth]{./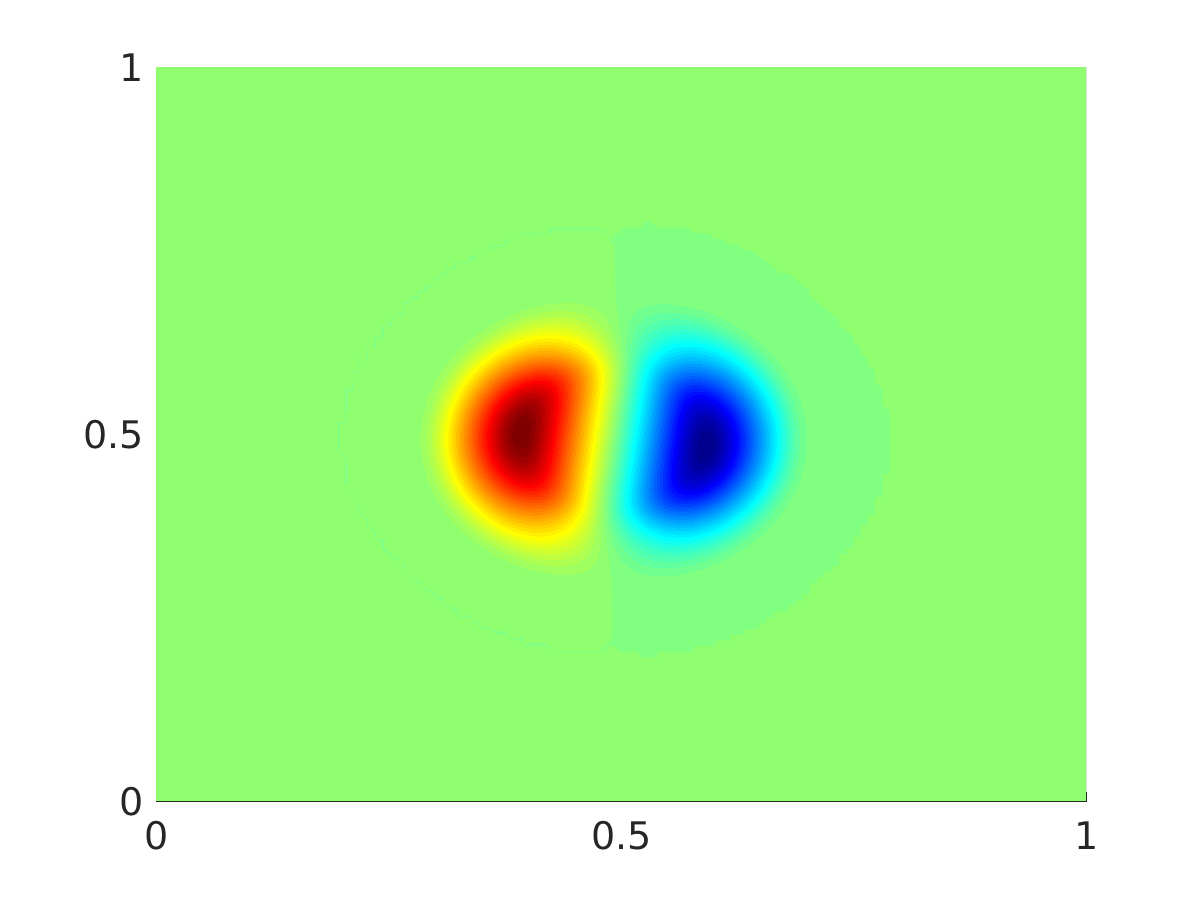}
\end{subfigure}\\
\begin{subfigure}{0.32\textwidth}
\includegraphics[width=\textwidth]{./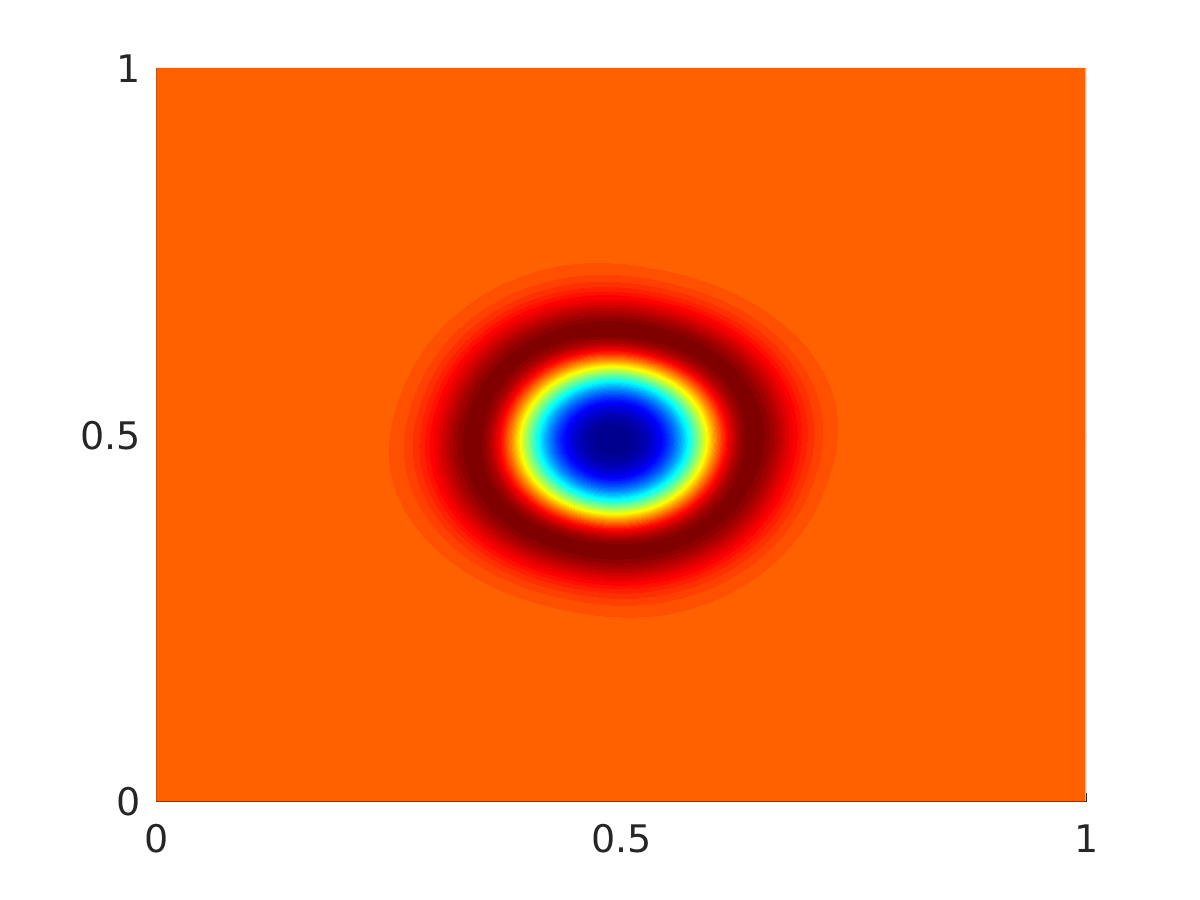}
\end{subfigure}
\begin{subfigure}{0.32\textwidth}
\includegraphics[width=\textwidth]{./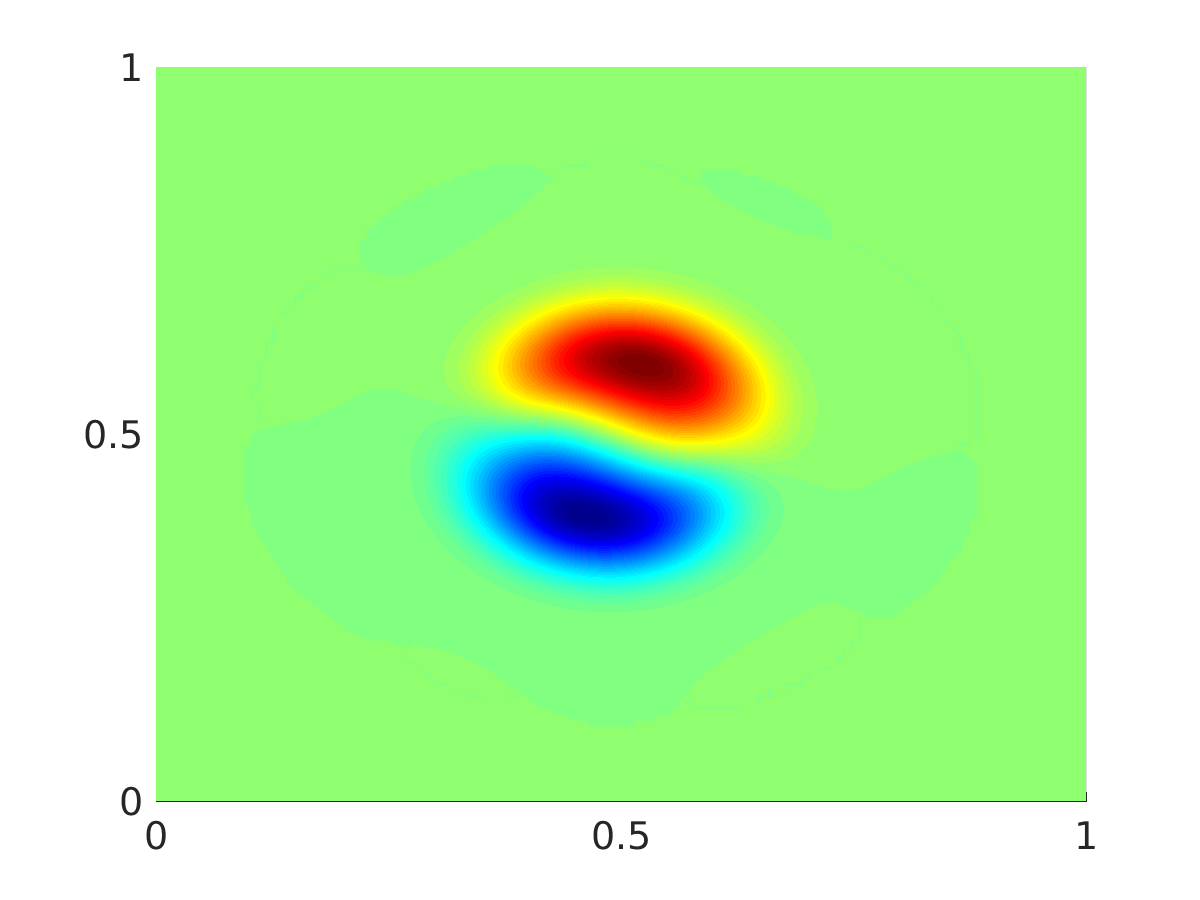}
\end{subfigure}
\begin{subfigure}{0.32\textwidth}
\includegraphics[width=\textwidth]{./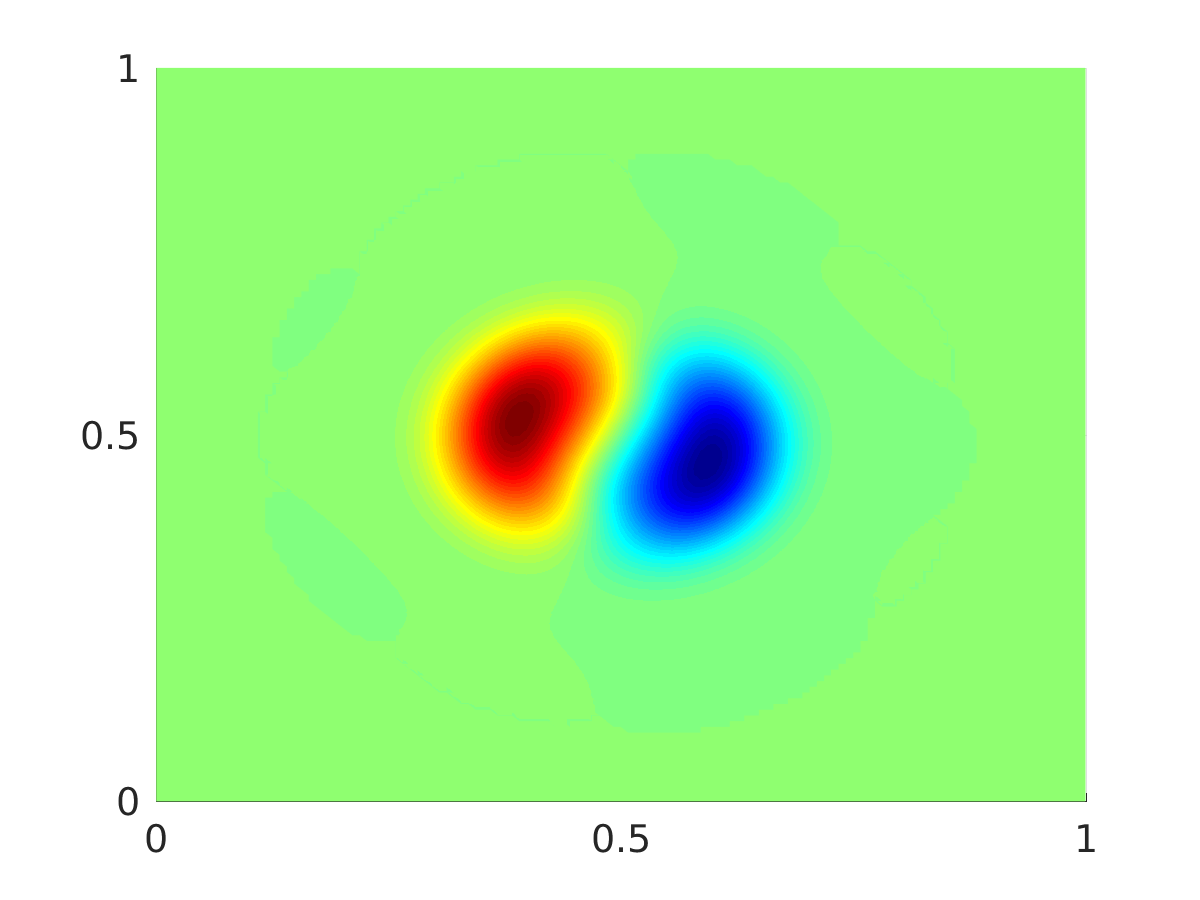}
\end{subfigure}\\
\begin{subfigure}{0.32\textwidth}
\includegraphics[width=\textwidth]{./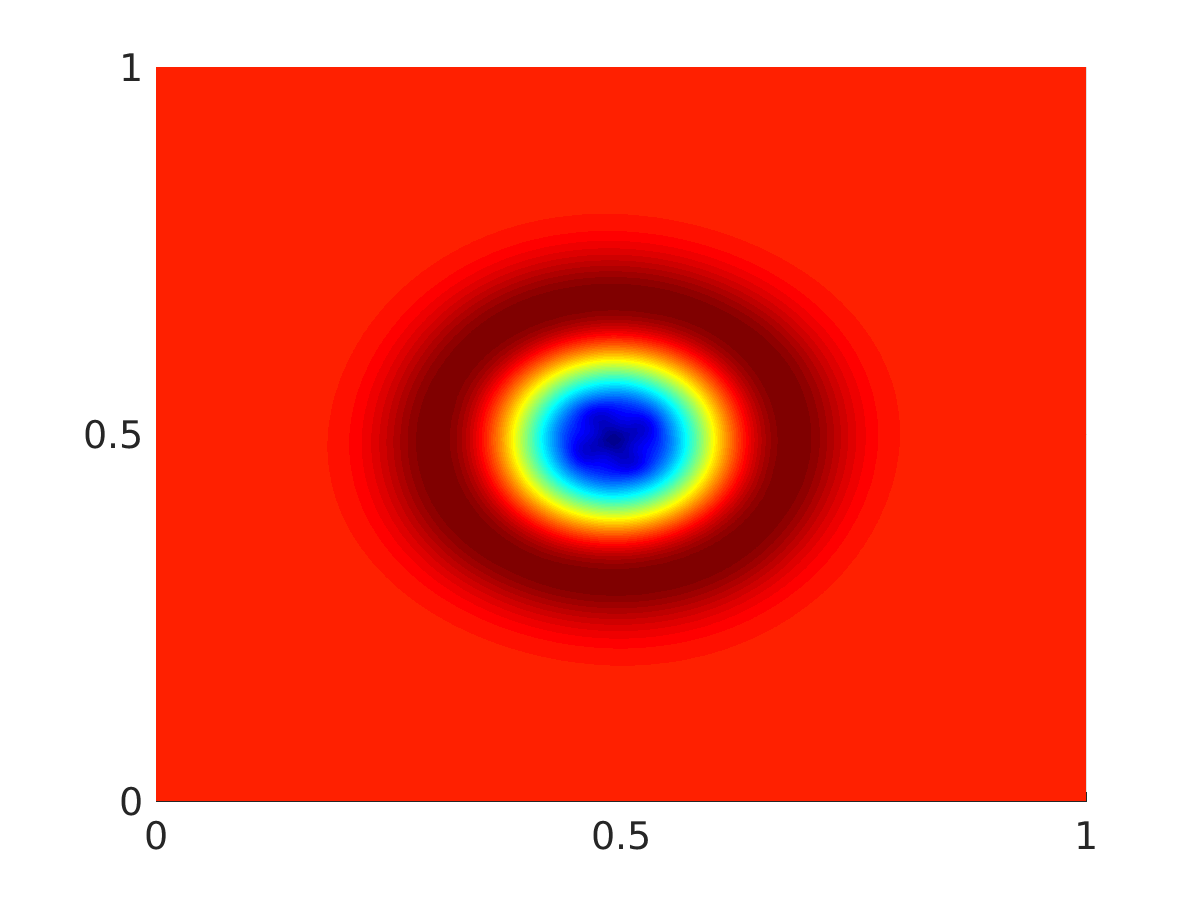}
\end{subfigure}
\begin{subfigure}{0.32\textwidth}
\includegraphics[width=\textwidth]{./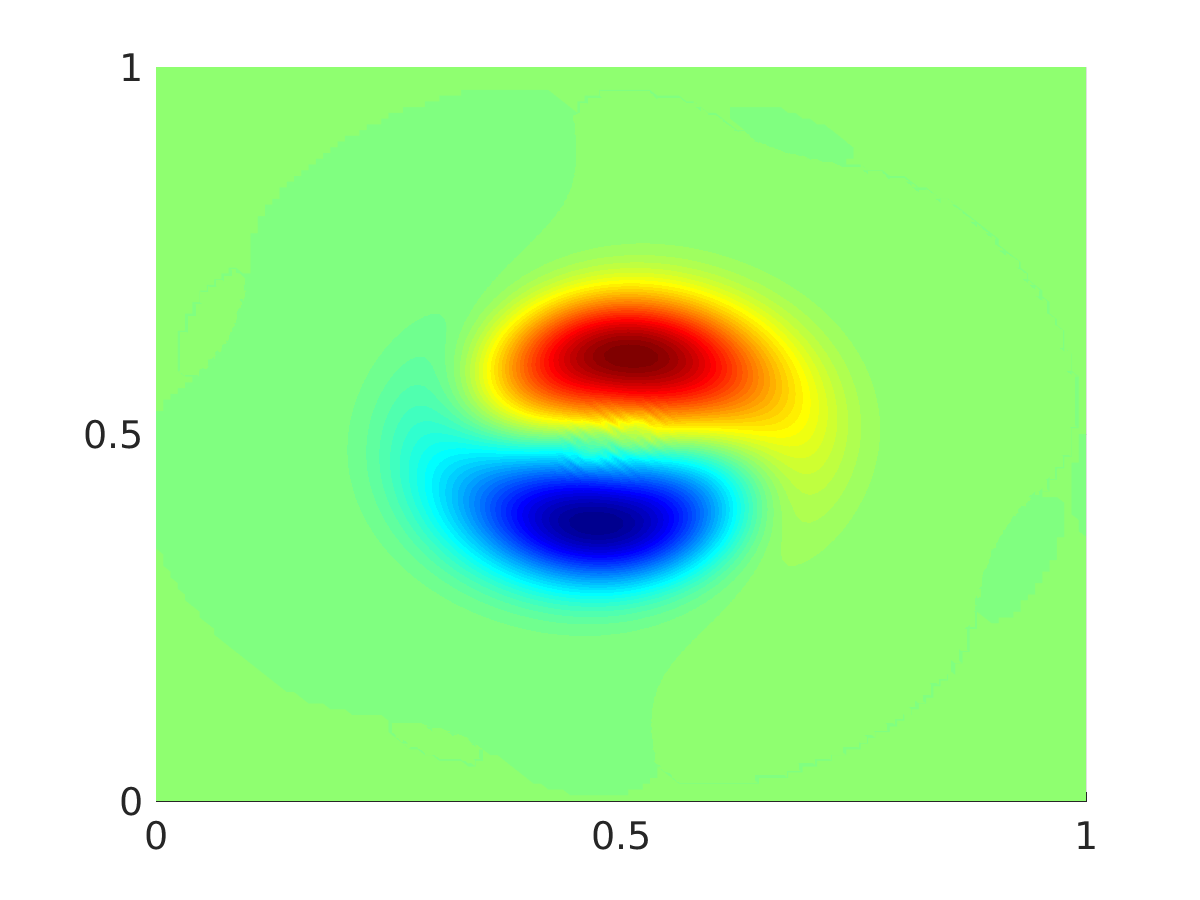}
\end{subfigure}
\begin{subfigure}{0.32\textwidth}
\includegraphics[width=\textwidth]{./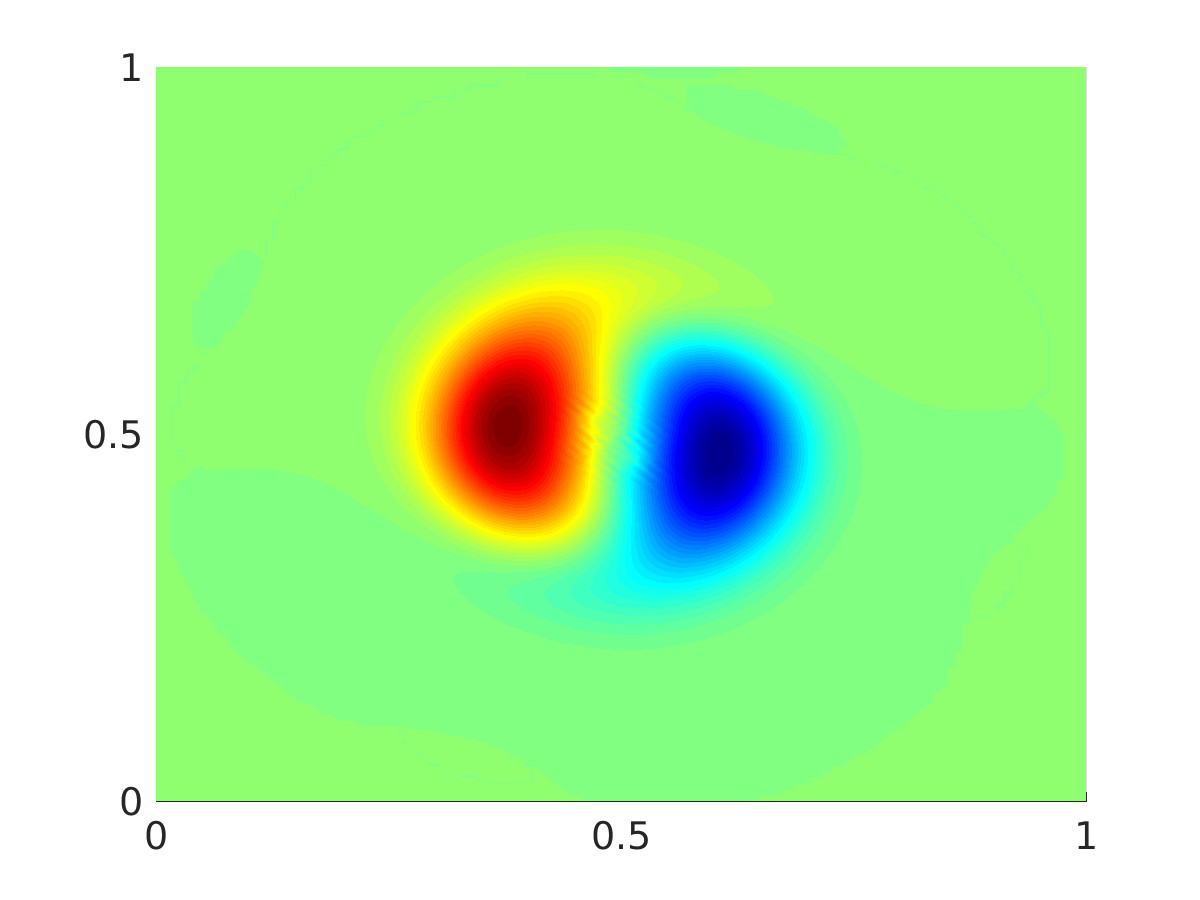}
\end{subfigure}\\
\begin{subfigure}{0.32\textwidth}
\includegraphics[width=\textwidth]{./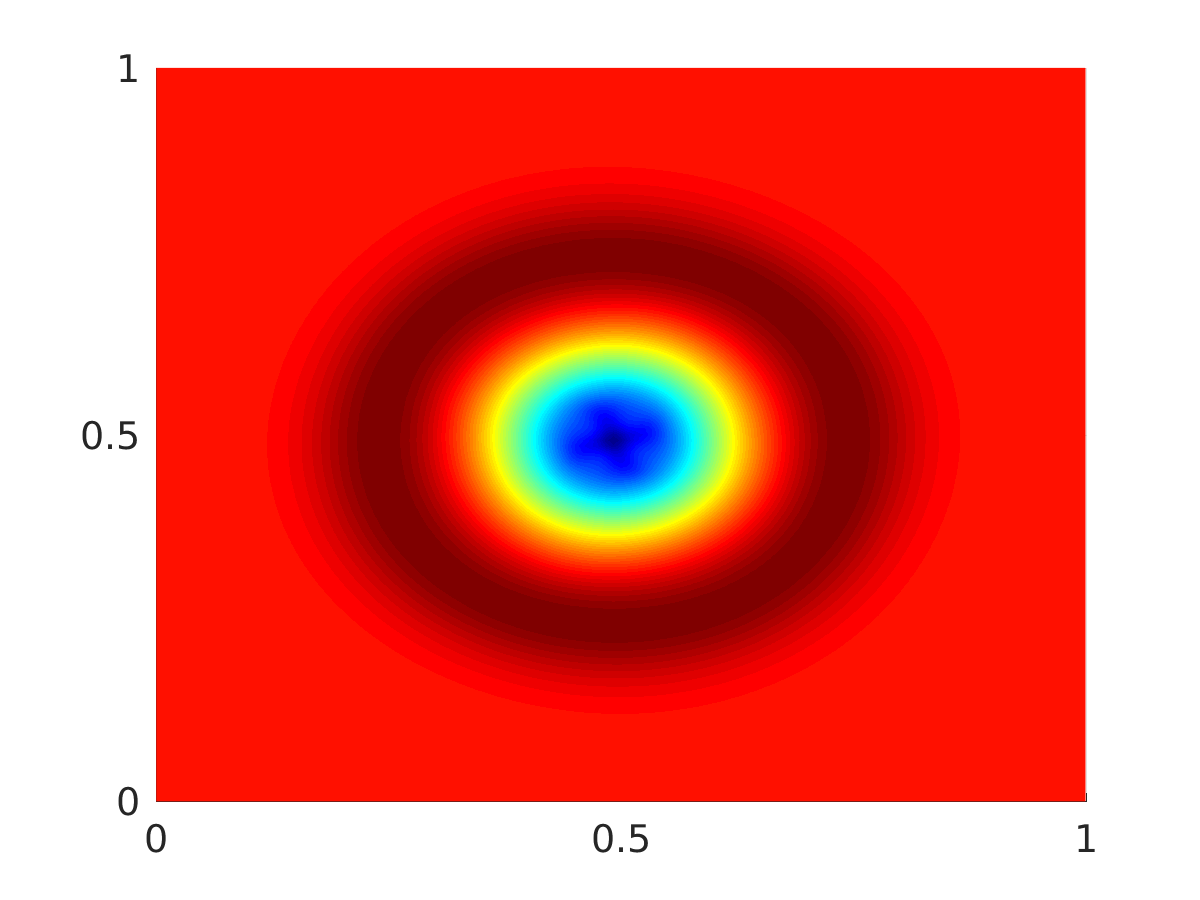}
\end{subfigure}
\begin{subfigure}{0.32\textwidth}
\includegraphics[width=\textwidth]{./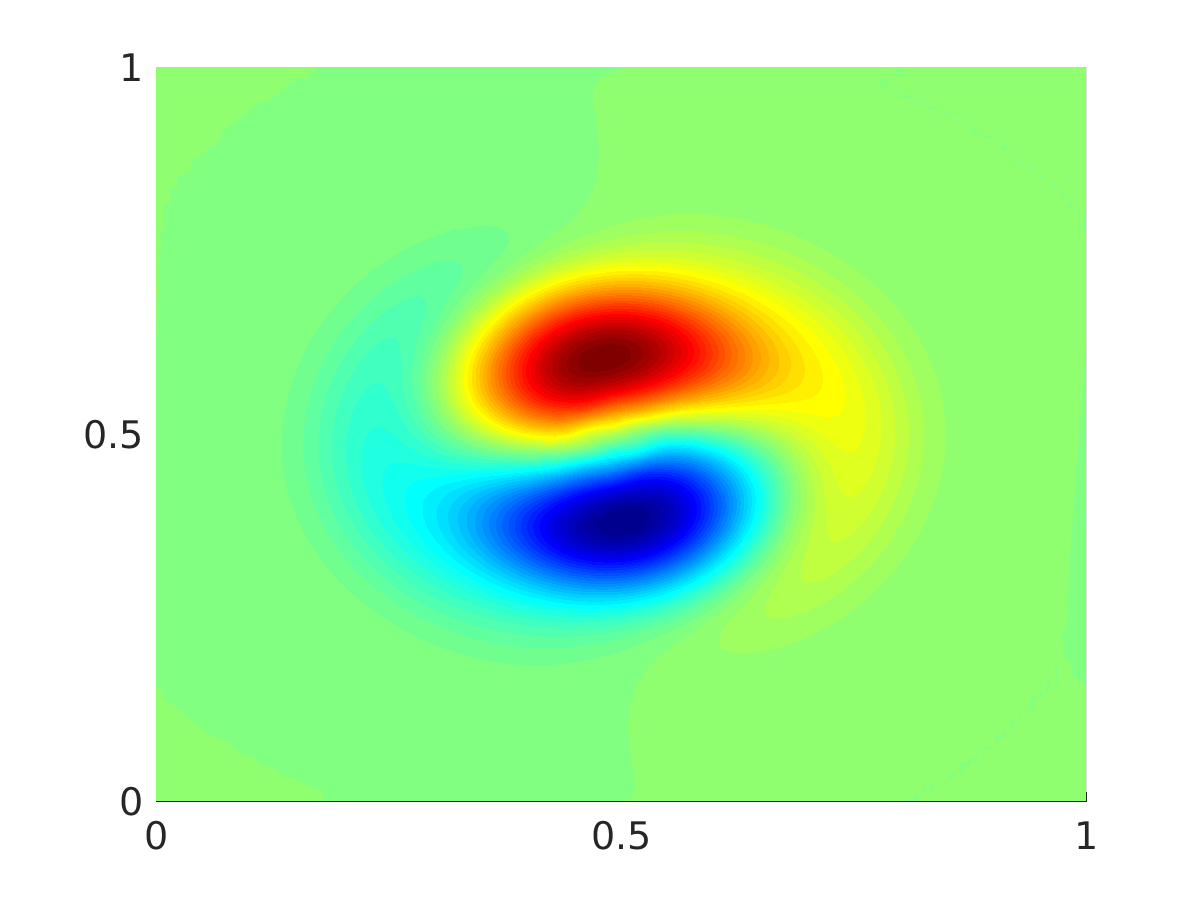}
\end{subfigure}
\begin{subfigure}{0.32\textwidth}
\includegraphics[width=\textwidth]{./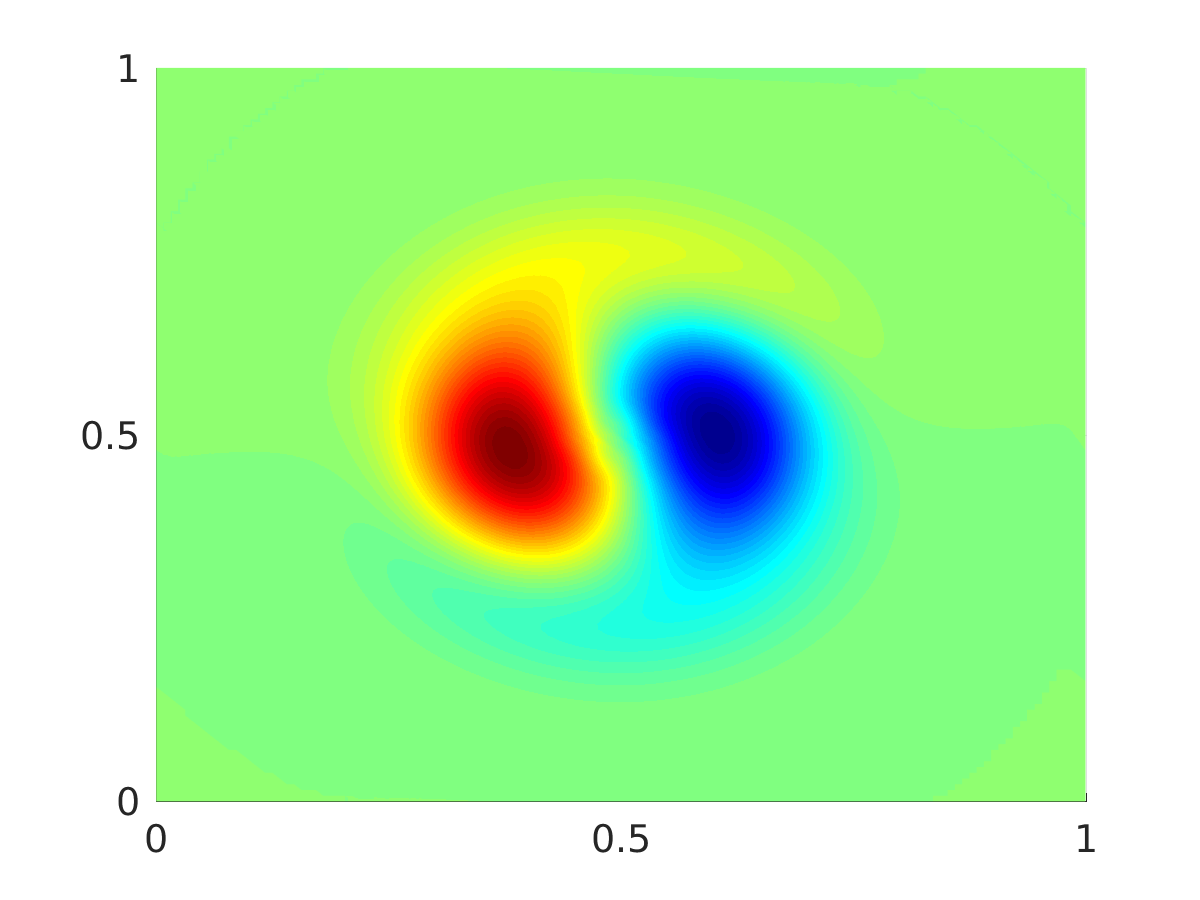}
\end{subfigure}\\
\begin{subfigure}{0.32\textwidth}
\includegraphics[width=\textwidth]{./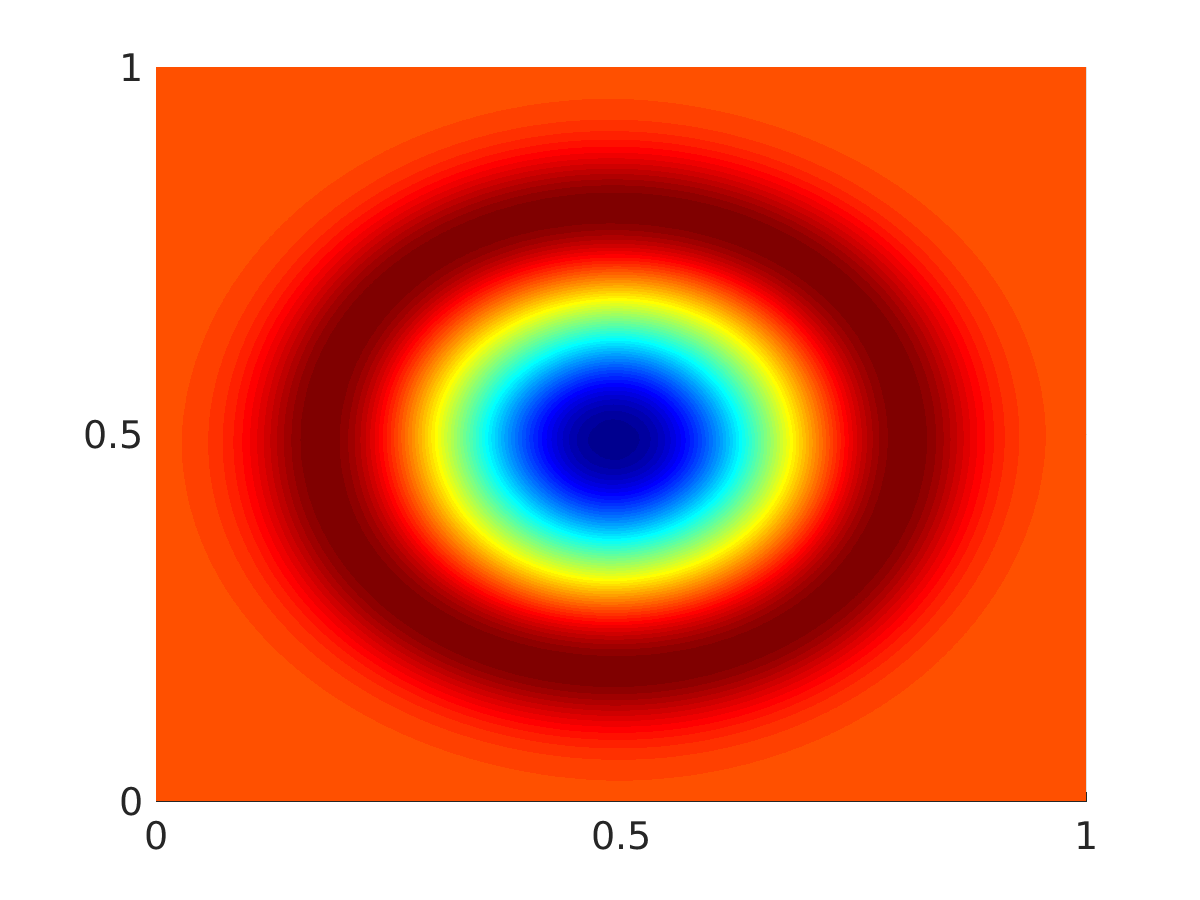}
\caption{density $\varrho$}
\end{subfigure}
\begin{subfigure}{0.32\textwidth}
\includegraphics[width=\textwidth]{./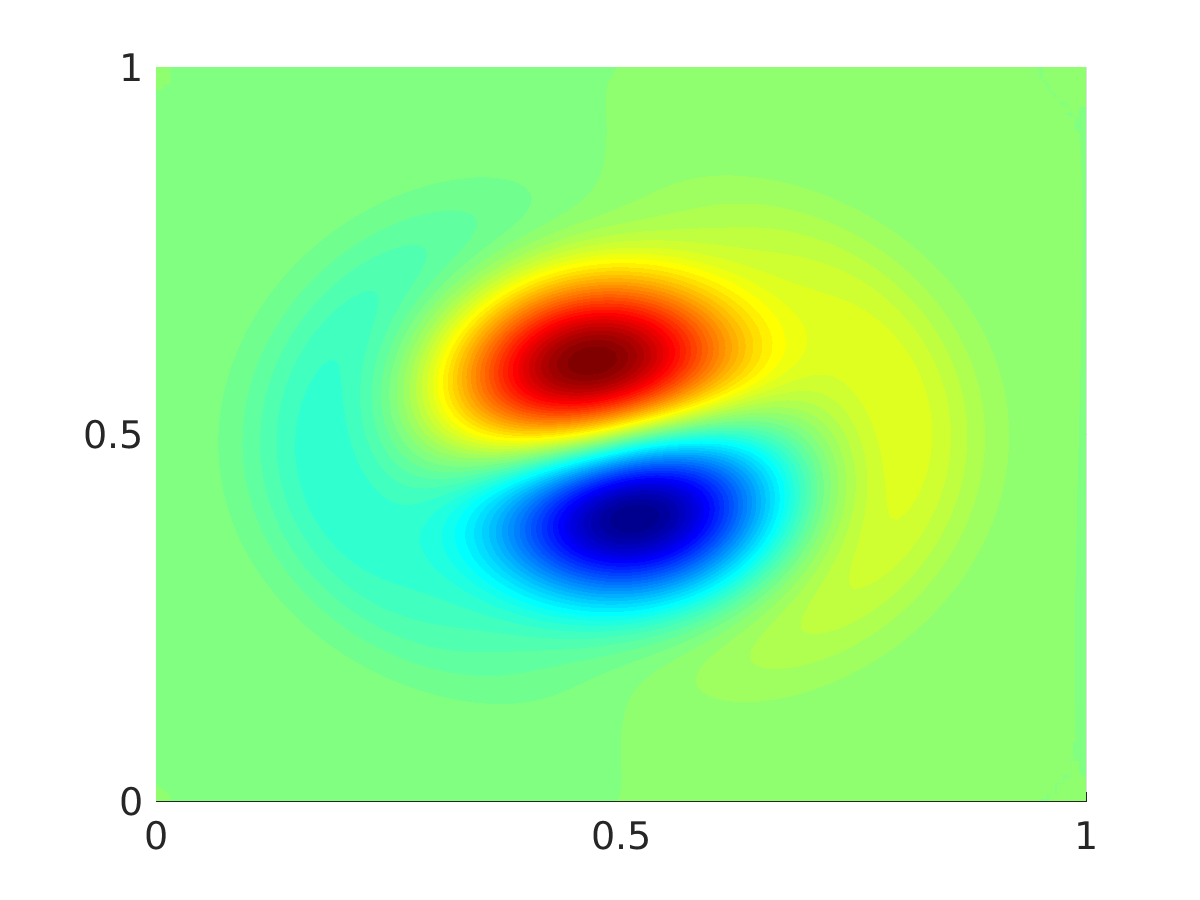}
\caption{velocity component $u_1$}
\end{subfigure}
\begin{subfigure}{0.32\textwidth}
\includegraphics[width=\textwidth]{./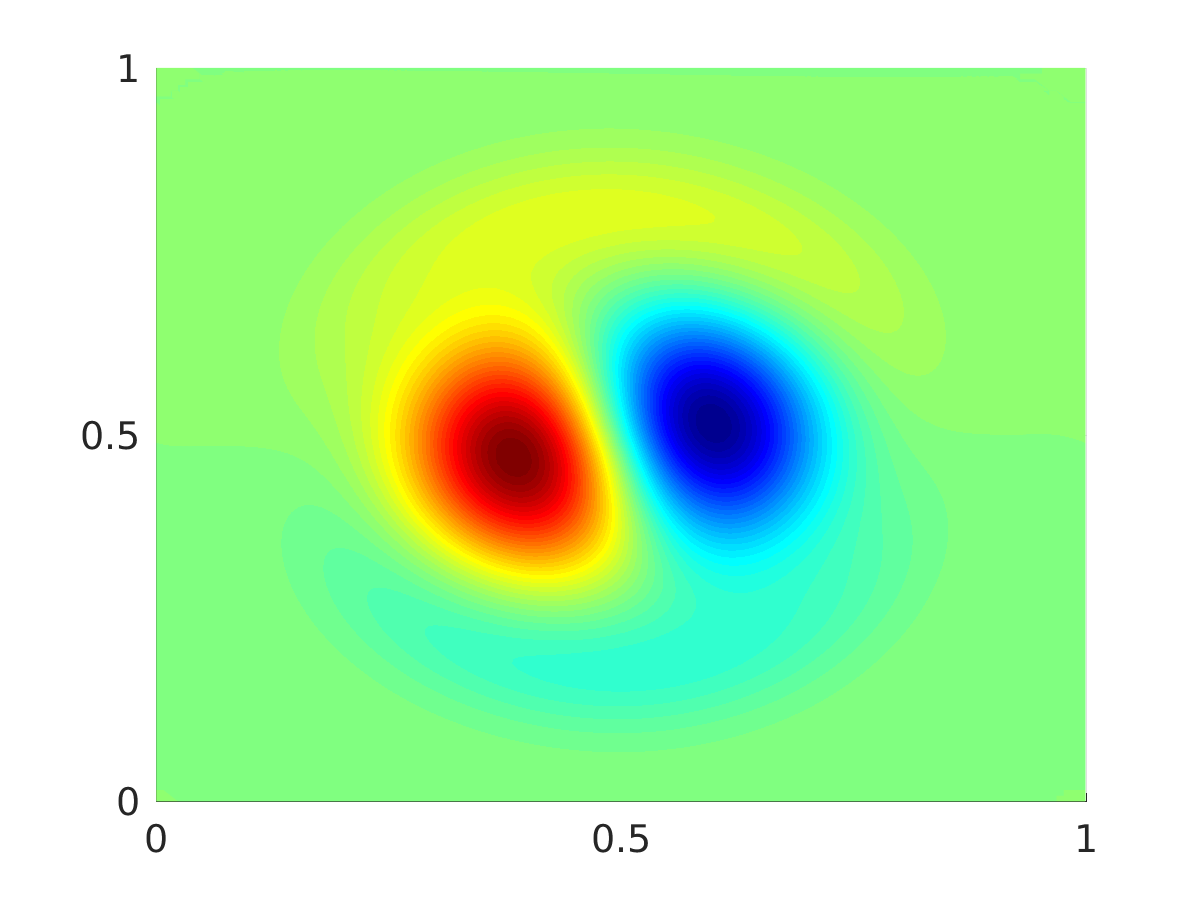}
\caption{velocity component $u_2$}
\end{subfigure}
\caption{Time evolution of  the Gresho--vortex: solution at $t=0.01, 0.05, 0.1, 0.15, 0.2$ from top to bottom, solution of density and velocity components from left to right}
\label{fig:gresho_vortex}
\end{figure}

\begin{table}[h!]\centering
\caption{Numerical convergence for Experiment~2.}
\label{tab:gresho_vortex}
\begin{tabular}{c|c|c|c|c|c|c|c|c}\hline
  h  &  $\norm{e_{\Grad \vu}}_{L^2(L^2)}$ &  EOC  &  $\norm{e_{\vu}}_{L^2(L^2)}$ &  EOC    &  $\norm{e_{\vr}}_{L^1(L^1)}$ &  EOC  & $\norm{e_{\vr}}_{L^{\infty}(L^{\gamma})}$ &  EOC  \\ \hline
1/32 & 6.66e-01 & --  & 3.16e-02 & --  & 6.64e-04 & --  & 1.64e-02 & --  \\
1/64 & 3.75e-01 & 0.83   & 1.66e-02 & 0.93   & 3.60e-04 & 0.88   & 8.85e-03 & 0.89   \\
1/128 & 1.91e-01 & 0.97   & 8.21e-03 & 1.01   & 1.80e-04 & 1.00   & 4.43e-03 & 1.00   \\
1/256 & 9.11e-02 & 1.07   & 3.86e-03 & 1.09   & 8.51e-05 & 1.08   & 2.09e-03 & 1.08   \\
 1/512 & 3.93e-02 & 1.21   & 1.66e-03 & 1.22   & 3.66e-05 & 1.22   & 8.96e-04 & 1.22  \\
 1/1024 & 1.31e-02 & 1.58   & 5.54e-04 & 1.58   & 1.21e-05 & 1.60   & 2.96e-04 & 1.60   \\

\hline
\end{tabular}
\end{table}

\section*{Conclusion}
We have studied a finite volume method for the multi--dimensional compressible isentropic Navier--Stokes equations on regular quadrilateral mesh in a periodic domain.
Due to  the artificial diffusion in the numerical flux function \eqref{num_flux} we have sufficiently strong a priori estimate on jumps of the discrete density. The solutions of the scheme were shown to exist while preserving the positivity of the  discrete density.  Moreover, we have shown the stability of the scheme by deriving the unconditional balance of the discrete total energy in Theorem~\ref{theorem_stability}. Furthermore, we have established the consistency formulation  provided the artificial diffusion coefficient is large enough, see Theorem~\ref{Tm2}.
In addition, we have shown in Theorem~\ref{thm_dmvs} that the numerical solutions of scheme \eqref{scheme} generate a DMV solution of the Navier--Stokes system~\eqref{ns_eqs}.
Finally, using the recent result on the DMV--strong uniqueness principle and the conditional regularity result \cite{Sun}, we have proven the convergence to the strong solution assuming only the existence of the latter, cf.
Theorem~\ref{thm_convergence}, and the unconditional convergence to  regular solution, cf. Theorem~\ref{thm_convergence_BD}.
Numerical experiments are also presented to support the theoretical results.
To the best of our knowledge, this is the first rigorous result concerning convergence of a finite volume method for the compressible isentropic Navier--Stokes equations in the multi--dimensional setting.

\def\cprime{$'$} \def\ocirc#1{\ifmmode\setbox0=\hbox{$#1$}\dimen0=\ht0
  \advance\dimen0 by1pt\rlap{\hbox to\wd0{\hss\raise\dimen0
  \hbox{\hskip.2em$\scriptscriptstyle\circ$}\hss}}#1\else {\accent"17 #1}\fi}

\bibliography{citace}
\bibliographystyle{siamplain}

\end{document}